\documentclass{article}
\usepackage[utf8]{inputenc}
\usepackage[english]{babel} 
\usepackage[T1]{fontenc} 
\usepackage[utf8]{inputenc} 
\usepackage{soul}
\usepackage[normalem]{ulem}
\usepackage{fancybox}
\usepackage{fancyvrb}
\usepackage{listings}
\usepackage{moreverb}
\usepackage{url}
\usepackage{graphicx}
\graphicspath{ {./images/} }
\usepackage{textcomp}
\usepackage{lmodern}
\usepackage{eurosym}
\usepackage[cyr]{aeguill}
\usepackage{amsmath} 
\usepackage{wrapfig} 
\usepackage{makeidx} 
\usepackage{picinpar} 
\usepackage[final]{pdfpages} 
\usepackage{array,multirow,tabularx}
\usepackage{amsmath}
\usepackage{amssymb}
\usepackage[a4paper]{geometry}
\usepackage{amsthm}
\usepackage{tikz}
\usepackage{fancyhdr}
\usepackage{colortbl}
\usepackage{color}
\usepackage{setspace}
\usepackage{longtable}
\usepackage{hhline}
\usepackage{arydshln}
\usepackage{makecell}
\usepackage{mathtools}
\usepackage{tkz-tab}
\usepackage{multicol}
\tikzset{node distance=2.0cm, auto}

\newcommand*\quot[2]{{^{\textstyle #1} \big/_{\textstyle #2}}}
\geometry{hscale=0.75,vscale=0.85,centering}

\usepackage[section]{placeins}

\DeclareMathOperator{\Hom}{Hom}

\DeclareMathOperator{\C}{\mathbb{C}}

\DeclareMathOperator{\h}{\mathcal{H}}
\DeclareMathOperator{\K}{\mathbb{K}}
\DeclareMathOperator{\R}{\mathbb{R}}

\DeclareMathOperator{\N}{\mathbb{N}}
\DeclareMathOperator{\F}{\mathbb{F}}

\DeclareMathOperator{\Der}{Der}

\DeclareMathOperator{\ad}{ad}
\DeclareMathOperator{\g}{\mathfrak{g}}

\DeclareMathOperator{\im}{\text{Im}}
\DeclareMathOperator{\Ker}{\text{Ker}}
\DeclareMathOperator{\ind}{\text{ind}}
\DeclareMathOperator{\w}{\omega}
\DeclareMathOperator{\id}{\text{id}}
\DeclareMathOperator{\obs}{\text{obs}}
\DeclareMathOperator{\Sl}{\mathfrak{sl}}
\allowdisplaybreaks

\title{Deformations and Cohomology of restricted Lie-Rinehart algebras in positive characteristic}
\author{Quentin Ehret\footnote{Université de Haute-Alsace, IRIMAS UR 7499, F-68100 Mulhouse, France, E-mail: quentin.ehret@uha.fr}  \; and  Abdenacer Makhlouf\footnote{Université de Haute-Alsace, IRIMAS UR 7499, F-68100 Mulhouse, France, E-mail: abdenacer.makhlouf@uha.fr} }
\date{\today}

\begin{document}

\newtheorem{thm}{Theorem}[section]
	\newtheorem{prop}[thm]{Proposition}
	\newtheorem{lem}[thm]{Lemma}
	\newtheorem{cor}[thm]{Corollary}
	\theoremstyle{definition}
	\newtheorem{defi}[thm]{Definition}
	\newtheorem*{rmq}{Remark}

\maketitle
\begin{abstract}
    The main purpose of this paper is  to study  restricted formal deformations of restricted Lie-Rinehart algebras in positive characteristic $p$. For $p>2$, we discuss the deformation theory and show that deformations  are controlled by the restricted cohomology introduced by Evans and Fuchs.  Furthermore, for $p=2$, we introduce a new  cohomology complex   and show that it fits with the  deformation theory  of restricted Lie-Rinehart algebras in characteristic 2. In particular, we study  the structure and cohomology of restricted Heisenberg algebras.
\end{abstract}

\noindent\textbf{Keywords:} Modular Lie algebra, restricted cohomology, restricted Lie-Rinehart algebra, deformation.
\noindent\textbf{MSC classification:} 17B50, 17B56, 16W25, 17B60.
\tableofcontents
\section*{Introduction}

Lie-Rinehart algebras are an algebraic version of Lie algebroids. They were first introduced in characteristic 0 by several authors, including Palais (\cite{PR61}) and Rinehart (\cite{RG63}), and were later studied  deeply by Huebschmann (\cite{HJ90}, \cite{HJ98}). A Lie-Rinehart algebra over a commutative ring $\K$ is a pair $(A,L)$, where $A$ is a commutative, associative $\K$-algebra and $L$ is a Lie $\K$-algebra that acts on $A$ as a module through a map called the anchor map, denoted $\rho:L\rightarrow\Der(A)$, where $\Der(A)$ is the Lie algebra of derivations of $A$. This map must be both a Lie and $A$-module map satisfying a compatibility condition.

In positive characteristic, one needs additional structure to deal with Lie-Rinehart algebras. Let $A$ be an associative algebra over a field of positive characteristic $p$. Then the Lie algebra of derivations $\Der(A)$ is equipped with a map $D\mapsto D^p$, which takes values in $\Der(A)$. It is worth noticing that in characteristic 0, this map is generally not a derivation for $D\in \Der(A)$. This led to the definition of restricted Lie algebras, which are Lie algebras equipped with a non-linear maps called  $p$-maps, denoted $(\cdot)^{[p]}: L\rightarrow L$, that satisfy certain compatibility conditions with respect to the Lie bracket and the additive structure of the underlying vector space. These objects were introduced by Jacobson (\cite{JN37}, \cite{JN41}) and later studied by Hochschild (\cite{HG54}, \cite{HG55.1}, \cite{HG55.2}).

The positive characteristic version of Lie-Rinehart algebras, known as restricted Lie-Rinehart algebras, appeared implicitly in the work of Hochschild in the context of Galois-Jacobson theory for purely inseparable extensions of exponent 1 (\cite{HG55.1}). They were later studied by several other authors, see (\cite{RD00}, \cite{DI12}, \cite{SC15}, \cite{SP16}). The deformation of restricted Lie-Rinehart algebras and the associated cohomology are closely related to those of restricted Lie algebras. Hochschild used the notion of restricted universal enveloping algebra to define the restricted cohomology groups of restricted Lie algebras and to establish a connection to the ordinary Chevalley-Eilenberg cohomology of Lie algebras through a six-term exact sequence (\cite{HG54}). May also made contributions to this area (\cite{MJP66}). However, the definition of restricted cohomology given by Hochschild is not well-suited for computation. Evans and Fuchs attempted to address this issue by providing a new free resolution of the ground field in terms of modules over the restricted enveloping algebra, but their approach only allows for complete computations of cohomology up to dimension $p$ in the case where the restricted Lie algebra is abelian. In the non-abelian case, the bracket and non-linearity of the $p$-map make the problem much more difficult, and Evans and Fuchs  only  described the first and second restricted cohomology groups (\cite{ET00}, \cite{EF08}). This is sufficient for certain algebraic interpretations, but not enough to develop a complete theory of formal deformations of restricted Lie algebras in the vein of Gerstenhaber (\cite{GM64}) and Nijenhuis-Richardson (\cite{NR66}, \cite{NR67}).

Many of the previously mentioned results do not hold in characteristic 2. Bouarroudj and his collaborators have done extensive work in this context (\cite{BB18}, \cite{BGL09}, \cite{BLLI15}) and references therein. Recently, Bouarroudj and Makhlouf developed a complete cohomology complex for Hom-Lie superalgebras in characteristic 2, which are generalization of Lie superalgebras in characteristic 2,  and provided several interesting algebraic interpretations, including the control of formal deformations (\cite{BM22}).

The aim of this paper is to study cohomology and deformations of restricted Lie algebras in characteristic $p$ for $p\leq 3$ and $p=2$. It is organized as follows. In the first section, we review the basics of restricted Lie algebras in characteristic $p$, as well as some examples. We then consider cohomology and provide the first terms of the restricted cochain complex and the restricted differentials as defined in \cite{EF08}. In Section 2, we  review the definition of restricted Lie-Rinehart algebras and  introduce the concept of restricted multiderivation for restricted Lie-Rinehart algebras. In Section 3, we  develop a theory of formal deformations and show that it is controlled by the restricted cohomology. Then, in Section 4 we  discuss restricted structures on the Heisenberg algebra of dimension 3. We show that there are (up to restricted isomorphism) three different restricted Heisenberg algebras, and for each of them we compute the second restricted cohomology group with adjoint coefficients. We also construct restricted Lie-Rinehart algebras on these restricted Heisenberg algebras and compute their deformations. Finally, in the last section  focuses  on the specific case of characteristic 2. We construct a new and complete complex of cochains that has no analogue in any characteristic other than 2, provide some algebraic interpretations, and show that this complex controls formal deformations.

Throughout the paper, ``ordinary" shall be understood as ``not restricted" and we assume that algebras are finite-dimensional.


\section{Restricted Lie Algebras}

We first review some basics about  restricted Lie algebras as well as their cohomology.

\subsection{Basics}\label{basics}

Let $\F$ denote a field of characteristic $p\neq 0$. The notions in this section are taken from \cite{SF88} and were first introduced by Jacobson in \cite{JN41}.

\begin{defi}[Restricted Lie Algebra]\label{restdefi}
	A restricted Lie algebra over $\F$ is a Lie $\F$-algebra $L$ endowed with a map $(\cdot)^{[p]}:L\longrightarrow L$ such that
	\begin{enumerate}
		\item $(\lambda x)^{[p]}=\lambda^px^{[p]}$, $x\in L$, $\lambda\in \F$;
		\item $\left[x,y^{[p]} \right]=[ [\cdots[x,  \overset{p\text{ terms}}{\overbrace{y],y],\cdots,y}}]$;
		\item $(x+y)^{[p]}=x^{[p]}+y^{[p]}+\displaystyle\sum_{i=1}^{p-1}s_i(x,y)$,
	\end{enumerate}
 with $is_i(x,y)$ being the coefficient of $Z^{i-1}$ in $\ad^{p-1}_{Z x+y}(x)$. Such a map $(\cdot)^{[p]}:L\longrightarrow L$ is called a $p$-map.
\end{defi}
\noindent We have the explicit formula
\[
 is_i(x,y)=\sum_{\underset{\sharp\{k,~x_k=x\}=i-1}{x_k\in\{x,y\}}}[x_1,[x_2,[\cdots,[x_k,\cdots,[x_{p-2},[y,x]]\cdots], \]
where $\sharp\{k,~x_k=x\}$ refers to the number of $x_k$'s equal to $x$.\\ We refer a restricted Lie algebra by a triple $(L,[ \cdot , \cdot ],(\cdot )^{[p]})$.
\bigskip

\noindent Throughout the paper, we denote by $\sharp\{x\}$ the number of $x$'s among the $x_k$'s. Then we have
\begin{align}\label{si}
\sum_{i=1}^{p-1}s_i(x,y)&=\sum_{i=1}^{p-1}\frac{1}{i}\sum_{\underset{\sharp\{k,~x_k=x\}=i-1}{x_k\in\{x,y\}}}[x_1,[x_2,[\cdots,[x_k,\cdots,[x_{p-2},[y,x]]\cdots]\nonumber\\&=\sum_{\underset{x_{p-1}=y,~x_p=x}{x_k\in\{x,y\}}}\frac{1}{\sharp\{x\}}[x_1,[x_2,[\cdots,[x_k,\cdots,[x_{p-1},x_p]]\cdots],
\end{align}
since $\frac{1}{i}$ is exactly the inverse of the number of $x$'s among the $x_k$'s.

\bigskip

We have the following particular cases:

\begin{itemize}
	\item[$\bullet$] $p=2:$  for $x,y\in L$, we have $\ad_{Z x+y}(x)=[x,Z x+y]=[x,y]$. We deduce that $s_1(x,y)=[x,y]$. Hence $$(x+y)^{[2]}=x^{[2]}+y^{[2]}+[x,y].$$
	
	\item[$\bullet$] $p=3:$ for $x,y\in L$, we have $\ad^2_{Z x+y}(x)=\left[[x,Z x+y],Z x+y \right]=Z[[x,y],x]+[[x,y],y].$ We deduce that $$s_1(x,y)=[[x,y],y];~~ s_2(x,y)=2[[x,y],x].$$
	
\end{itemize}

\begin{rmq}
    If the adjoint representation $\ad:x\mapsto [x,\cdot]$ is faithful (or equivalently, if the Lie algebra is centerless), then both Conditions 1. and 3. in  Definition \ref{restdefi} follow from  Condition 2.
\end{rmq}

\noindent\textbf{Examples:} 
\begin{enumerate}
\item Let $A$ be an associative algebra over $\F$. Endowed  with the bracket $[x,y]=xy-yx$,   $A$ becomes a restricted Lie algebra with the map $x\longmapsto x^p$,
called \textbf{Frobenius morphism}.

\item Let $L$ be an abelian Lie algebra. Then, any map $f: L\rightarrow L$ satisfying $$f(\lambda x+y)=\lambda^p f(x)+f(y),~x,y\in L,~\lambda\in \F$$ is a $p$-map on $L$. A map satisfying such a property is said  \textbf{$p$-semilinear}.

\item Let $\mathbb{F}$ be a field of characteristic $p\geq3$. We consider $\Sl_2(\F)=\text{Span}_{\F} \{X,Y,H\}$ with skew-symmetric brackets $[X,Y]=H,$ $[H,X]=2X,$ $[H,Y]=-2Y$. We endow it with a restricted structure using the $p$-map given by $X^{[p]}=Y^{[p]}=0,$ $H^{[p]}=2^{p-1}H.$ The Lie algebra $\Sl_2(\F)$ is simple, so the restricted structure is unique. Indeed, two $p$-maps on a restricted Lie algebra differ from a map  whose image lies in the center. If the algebra is simple, the center is reduced to $\{0\}$ (see \cite{SF88}).

\begin{rmq}
    In \cite{EM22}, $\Sl_2(\C)$ is denoted $L^6_{3|0}$.
\end{rmq}

\item Let $\mathbb{F}$ be a field of characteristic $p\geq5$. We consider the \textbf{Witt algebra} $W(1)=\text{Span}_{\F} \{e_{-1},e_0,...,e_{p-2}\}$   with the bracket
$$[e_i,e_j]=
    \begin{cases}
		(j-i)e_{i+j} ~\text{ if }~ i+j\in\{-1,...,p-2\};\\
		0 ~\text{ otherwise;}\\
	\end{cases}$$
and the $p$-map
$$e_i^{[p]}=
    \begin{cases}
		e_0^{[p]}=e_0;\\
		e_i^{[p]}=0 ~\text{ if } i\neq 0.\\
	\end{cases}$$

Then $\left(W(1),[\cdot,\cdot], (\cdot)^{[p]}\right)$ is a restricted Lie algebra (see \cite{EFP16}). Moreover, this Lie algebra is also simple, so the restricted structure is unique. 

\noindent One can express the Witt algebra as the derivation algebra of the  commutative associative algebra 
$A:=\quot{\F[x]}{(x^p-1)}$ (see \cite{EF02}). In this framework, the basis elements are $e_i=x^{i+1}\frac{d}{dx}$, the bracket being the commutator: if $f\in A,$ we have $$\left[x^{i+1}\frac{d}{dx},x^{j+1}\frac{d}{dx}\right](f)=(j-i)x^{i+j+1}\frac{d}{dx}(f)\text{ if } i+j+1\in\{-1,...,p-2\} \text{ and } 0 \text{ otherwise. }$$  The $p$-map is then given by $$\left(x\frac{d}{dx}\right)^{[p]}=x\frac{d}{dx} \text{ and } \left(x^k\frac{d}{dx}\right)^{[p]}=0,~k\neq 1.$$


\item  Here again, let $\mathbb{F}$ be a field of characteristic $p\geq5$. We consider the \textbf{restricted filiform Lie algebra} $\mathfrak{m}^{\lambda}_0(p)=\text{Span}\{e_1,\cdots,e_p\}$, with the bracket $[e_1, e_i] = e_{i+1}, ~2\leq i \leq p-1$. Let $\lambda=(\lambda_1,\cdots,\lambda_p)\in\F^p$. Then, it can be shown that $e_k^{[p]}=\lambda_ke_p$ defines a $p$-map on $\mathfrak{m}^{\lambda}_0(p)$ (see \cite{EF19}).

\end{enumerate}

\begin{defi}
	Let $\left( L_1,[\cdot,\cdot]_1,(\cdot)^{[p]_1}\right) $ and $\left( L_2,[\cdot,\cdot]_2,(\cdot)^{[p]_2}\right) $ be two restricted Lie algebras. A \textbf{restricted morphism} (or \textbf{$p$-morphism}) $\varphi:L_1\longrightarrow L_2$ is a Lie morphism that satisfies $\varphi\left(x^{[p]_1} \right)=\varphi(x)^{[p]_2}$. 
\end{defi}

\begin{defi}Let $\left( L,[\cdot,\cdot],(\cdot)^{[p]}\right)$ be a restricted Lie algebra.  A vector space $M$ over $\F$ is an $L$-module  with an action $L\times M\rightarrow M$ if for all $x,y\in L$ and $m\in M$ we have   $[x,y]\cdot m=x\cdot (y\cdot m)-y\cdot (x\cdot m)$. It is called \textbf{restricted} if we have in addition $x^{[p]}\cdot m=\left(\overset{p\text{ terms}}{\overbrace{x\cdot(x\cdots (x}}\cdot m)\cdots)\right)$. 
   
\end{defi}

\begin{thm}[Jacobson's Theorem (\cite{JN62})]\label{jacobson}
	Let $L$ be a $n$-dimensional Lie algebra over a field $\F$ of characteristic $p$. Suppose that $(e_j)_{j\in \{1,\cdots, n\}}$ is a basis of $L$ such that it exists $y_j\in L,~\left(\ad_{e_j} \right)^p=\ad_{y_j}$. Then it exists exactly one $p$-map such that $e_j^{[p]}=y_j,~\forall j=1,\cdots, n$. 
\end{thm}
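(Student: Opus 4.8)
The plan is to reduce everything to identities in an associative algebra, where the Jacobson-type formulas hold automatically, and then transport them back to $L$. First I would reinterpret axiom 2 of Definition \ref{restdefi}: reading the $p$-fold bracket $[[\cdots[x,y],\ldots],y]$ as $(-1)^p(\ad_y)^p(x)$ and the left-hand side as $-\ad_{y^{[p]}}(x)$, axiom 2 is equivalent (in every characteristic, since $(-1)^{p+1}=1$) to the single identity $\ad_{y^{[p]}}=(\ad_y)^p$ for all $y\in L$. Thus the hypothesis $(\ad_{e_j})^p=\ad_{y_j}$ is nothing but axiom 2 imposed on the basis with the prescribed value $e_j^{[p]}=y_j$. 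The whole task is then to manufacture a map $L\to L$ that upgrades this to \emph{all} of $L$ while respecting axioms 1 and 3; the nonlinearity of the desired map together with the possible presence of a center $Z(L)=\Ker(\ad)$ is what obstructs a naive definition.

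For existence I would pass to the universal enveloping algebra $U(L)$, where $L\hookrightarrow U(L)$ and the associative $p$-th power $x^p$ is available. The key observation is that each $\xi_j:=e_j^p-y_j$ is \emph{central} in $U(L)$: using the associative identity $\mathrm{ad}_{a^p}=(\mathrm{ad}_a)^p$ valid in characteristic $p$, one computes $[e_j^p,e_k]=(\ad_{e_j})^p(e_k)=\ad_{y_j}(e_k)=[y_j,e_k]$ for every generator $e_k$, so $[\xi_j,e_k]=0$ and $\xi_j$ commutes with all generators. Next, writing $x=\sum_i\lambda_i e_i$, the generalized Jacobson expansion (the multivariable analogue of the displayed formula for $s_i$) shows that $\Lambda(x):=x^p-\sum_i\lambda_i^p e_i^p$ is a sum of iterated commutators of the $\lambda_i e_i$, hence lies in $L$. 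I would then define
\[
x^{[p]}:=\sum_i\lambda_i^p\, y_i+\Lambda(x)\in L,
\]
so that $x^p-x^{[p]}=\sum_i\lambda_i^p\xi_i$ is central in $U(L)$, and $e_j^{[p]}=y_j$ because $\Lambda(e_j)=0$.

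With this definition I would verify the three axioms directly. Axiom 1 follows from the $p$-homogeneity $\Lambda(\mu x)=\mu^p\Lambda(x)$ together with $(\mu\lambda_i)^p=\mu^p\lambda_i^p$. Axiom 2 follows from centrality: for $w\in L$ one has $\ad_{x^{[p]}}(w)=[x^{[p]},w]=[x^p,w]=(\ad_x)^p(w)$, since $x^p-x^{[p]}$ is central and $\mathrm{ad}_{x^p}=(\mathrm{ad}_x)^p$. For axiom 3 the decisive computation is $\Lambda(x+y)-\Lambda(x)-\Lambda(y)=(x+y)^p-x^p-y^p=\sum_i s_i(x,y)$, where the coefficients of $e_i^p$ cancel because $(\lambda_i+\mu_i)^p=\lambda_i^p+\mu_i^p$ and the last equality is the associative Jacobson identity in $U(L)$; substituting this into the definition of $(\cdot)^{[p]}$ makes the central discrepancy vanish identically, giving $(x+y)^{[p]}=x^{[p]}+y^{[p]}+\sum_i s_i(x,y)$ exactly.

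Uniqueness I would settle by a short semilinearity argument: if $(\cdot)^{[p]}$ and $(\cdot)^{[p]'}$ both satisfy the axioms with the same basis values, then $\delta(x):=x^{[p]}-x^{[p]'}$ is additive (the $s_i$-terms cancel in axiom 3) and $p$-semilinear (axiom 1), so $\delta\big(\sum_i\lambda_i e_i\big)=\sum_i\lambda_i^p\,\delta(e_i)=0$. The main obstacle throughout is precisely the center $Z(L)=\Ker(\ad)$: axiom 2 pins down $x^{[p]}$ only modulo $Z(L)$, so any hand-made definition a priori satisfies axiom 3 only up to a central correction. The role of the enveloping algebra is exactly to fix the correct central term and force that correction to be zero, and I expect the two technical points—checking $\Lambda(x)\in L$ via the generalized Jacobson expansion and verifying that the axiom-3 discrepancy vanishes on the nose—to be the heart of the argument.
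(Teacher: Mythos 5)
The paper states this theorem as a classical result, citing Jacobson's book, and gives no proof of its own, so there is nothing internal to compare against. Your argument is correct and is essentially the standard proof from the cited literature (Jacobson; Strade--Farnsteiner): embed $L$ in $U(L)$ via PBW, observe that each $\xi_j=e_j^p-y_j$ is central, use the associative Jacobson formula to see that $x^p-\sum_i\lambda_i^p e_i^p$ is a Lie element, define $x^{[p]}$ so that $x^p-x^{[p]}$ is central, and deduce uniqueness from $p$-semilinearity of the difference of two such maps.
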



\subsection{Cohomology of Restricted Lie Algebras}

We assume here that  the ground field $\F$ is of characteristic $p>2$. We recall the definitions of the  Chevalley-Eilenberg cohomology for ordinary Lie algebras (\cite{CE48}) and the restricted cohomology for restricted Lie algebras (\cite{EF08}). In the latter case, we are only able to compute restricted cochains up to order $3$ and restricted cocycles up to order $2$.

\subsubsection{Ordinary Chevalley-Eilenberg Cohomology}

Let $L$ be a Lie algebra and $M$ be a $L$-module. If $m\geq 1$, and $\displaystyle TL=\bigoplus T^mL$ is the tensor algebra of $L$, we have
\[ \Lambda^mL=T^mL/\left\langle x_1\otimes\cdots\otimes x_{k}\otimes x_{k+1}\otimes\cdots \otimes x_m+x_1\otimes\cdots\otimes x_{k+1}\otimes x_k\otimes\cdots \otimes x_m \right\rangle,~~~x_1,\cdots,x_m \in  L.    \]
We define the cochains 
\begin{align*}
C^q_{CE}(L,M)&=\Hom_{\F}(\Lambda^mL, M)~\text{ for }m \geq 1,\\
C^0_{CE}(L,M)&\cong M.
\end{align*}
We define a differential map $d^m_{CE}: C^m_{CE}(L,M)\longrightarrow C^{m+1}_{CE}(L,M)$ by
\begin{align*}
 d_{CE}^m(\varphi)(x_1,\cdots,x_{q+1})&=\sum_{1\leq i<j\leq m+1}(-1)^{i+j-1}\varphi\left([x_i,x_j],x_1,\cdots\hat{x_i},\cdots,\hat{x_j},\cdots, x_{m+1} \right)\\&+\sum_{i=1}^{m+1}(-1)^{i}x_i\varphi(x_1,\cdots,\hat{x_i},\cdots,x_{m+1}).   
 \end{align*}
 
We have $d^{m+1}_{CE}\circ d_{CE}^{m}=0$. As usual we denote  the $m$-cocycles by $Z^m_{CE}(L,M)=\Ker(d_{CE}^m)$ and $m$-coboundaries by $B^m_{CE}(L,M)=\im(d_{CE}^{m-1})$.
Then we define the ordinary Chevalley-Eilenberg cohomology of $L$ with values in $M$ by
$$  H^m_{CE}(L,M)=Z^m_{CE}(L,M)/B^m_{CE}(L,M).    $$

\begin{prop}
    Let $L$ be a Lie algebra over a field of characteristic $p>0$ and suppose that $H_{CE}^1(L,L)=0$. Then,    $L$ admits  a $p$-map.
\end{prop}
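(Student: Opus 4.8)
The plan is to reduce the existence of a $p$-map to Jacobson's Theorem \ref{jacobson}, which requires producing, for some basis $(e_j)_j$ of $L$, elements $y_j\in L$ satisfying $\left(\ad_{e_j}\right)^p=\ad_{y_j}$. The bridge between this algebraic condition and the cohomological hypothesis is the classical identification of the first Chevalley-Eilenberg cohomology group with the quotient of derivations by inner derivations.

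First I would record the characteristic-$p$ fact that makes the statement plausible: for any $x\in L$, the map $\left(\ad_x\right)^p$ is again a derivation of $L$. Indeed, if $D\in\Der(L)$, then expanding $D^p[u,v]$ by the iterated Leibniz rule produces the intermediate terms weighted by the binomial coefficients $\binom{p}{k}$, all of which vanish in $\F$ for $0<k<p$; hence $D^p[u,v]=[D^pu,v]+[u,D^pv]$, so $D^p\in\Der(L)$. Applying this to the inner derivation $D=\ad_{e_j}$ gives $\left(\ad_{e_j}\right)^p\in\Der(L)$.

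Next I would unwind the complex in degrees $0$ and $1$ with coefficients in the adjoint module $M=L$, where $x\cdot m=[x,m]$. Writing out $d^1_{CE}$ shows that the condition $\varphi\in\Ker(d^1_{CE})$ is exactly the Leibniz identity $\varphi([x,y])=[\varphi(x),y]+[x,\varphi(y)]$, so $Z^1_{CE}(L,L)=\Der(L)$. Likewise $d^0_{CE}$ sends $m\in L\cong C^0_{CE}(L,L)$ to the inner derivation $\ad_m$ (up to sign), so $B^1_{CE}(L,L)=\{\ad_m:m\in L\}=\im(d^0_{CE})$. Consequently, the hypothesis $H^1_{CE}(L,L)=0$ says precisely that \emph{every derivation of $L$ is inner}.

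Finally I would combine these observations. Fix any basis $(e_j)_j$ of $L$. By the second step each $\left(\ad_{e_j}\right)^p$ is a derivation, and by the third step it is therefore inner, so there exists $y_j\in L$ with $\left(\ad_{e_j}\right)^p=\ad_{y_j}$. Jacobson's Theorem \ref{jacobson} then yields a (unique) $p$-map on $L$ with $e_j^{[p]}=y_j$, proving that $L$ admits a $p$-map. The crux of the argument is the degree-one cohomology identification $H^1_{CE}(L,L)\cong\Der(L)/\{\ad_m:m\in L\}$; once this is in hand, the proposition is a direct application of Jacobson's criterion, and I expect no further obstacle.
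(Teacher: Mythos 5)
Your proof is correct and follows essentially the same route as the paper: identify $H^1_{CE}(L,L)=0$ with the statement that every derivation is inner, note that each $\left(\ad_{e_j}\right)^p$ is a derivation (hence inner, giving the required $y_j$), and conclude by Jacobson's Theorem. The only difference is that you spell out the iterated-Leibniz/binomial-coefficient justification that $D^p$ is again a derivation, which the paper leaves implicit here (it records the analogous computation for associative algebras in Equation (\ref{derip})).
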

\begin{proof}
    Since $H_{CE}^1(L,L)=0$, every derivation is inner. In particular, if $\{e_1,\cdots,e_n\}$ is a basis of $L$,  the derivation $(\ad_{e_i})^p,~i=1,\cdots,n$ is inner. Therefore, it exists $u_i\in L,~i=1,\cdots,n$ such that $(\ad_{e_i})^p=\ad_{u_i}$. The  conclusion follows thanks to Jacobson's Theorem \ref{jacobson}.
\end{proof}

\subsubsection{Restricted cohomology}

Let $L$ be a restricted Lie algebra and $M$ be a  restricted $L$-module.

\begin{defi}
	Let $\varphi\in C^2_{CE}(L,M)$ and $\omega: L\longrightarrow M$ be a map.
	We say that $\omega$ has the $(*)$-property with respect to $\varphi$ if
	\begin{align}
		 \omega(\lambda x)&=\lambda^p\omega(x),~\forall \lambda\in \F,~\forall x\in L;\\
		 \omega(x+y)&=\omega(x)+\omega(y)+\displaystyle\sum_{\underset{x_1=x,~x_2=y}{x_i\in\{x,y\}}}\frac{1}{\sharp\{x\}}\sum_{k=0}^{p-2}(-1)^kx_p\cdots x_{p-k+1}\varphi([[\cdots[x_1,x_2],x_3]\cdots,x_{p-k-1}],x_{p-k}),
	\end{align}
with $x,y\in L$ and $\sharp\{x\}$ is the number of factors $x_i$ equal to $x$.
We set 

$$ C^2_*(L,M)=\left\lbrace (\varphi,\omega),~\varphi\in C^2_{CE}(L,M),~\omega: L\longrightarrow M \text{ has the $(*)$-property w.r.t } \varphi \right\rbrace.     $$
\end{defi}

\noindent\textbf{Example:} Let $p=3$ and  $\varphi\in C^2_{CE}(L,M)$. Then a map $\omega:L\rightarrow M$ has the ($*$)-property with respect to $\varphi$ if and only if
\begin{align}
    \omega(\lambda x)&=\lambda^3\w(x),~\forall\lambda\in \F,~\forall x\in L;\\
    \w(x+y)&=\w(x)+\w(y)+\varphi([x,y],y)+\frac{1}{2}\varphi([x,y],x)-\frac{1}{2}x\cdot\varphi(x,y)-y\cdot\varphi(x,y),~\forall x,y\in L.\label{pssh}
\end{align}
Equation (\ref{pssh}) can be rewritten as
$$ \w(x+y)=\w(x)+\w(y)+\varphi([x,y],y)-\varphi([x,y],x)+x\cdot\varphi(x,y)-y\cdot\varphi(x,y),~\forall x,y\in L.$$

\begin{defi}
	Let $\alpha\in C^3_{CE}(L,M)$ and $\beta: L\times L\longrightarrow M$ be a map. We say that $\beta$ has the $(**)$-property with respect to $\alpha$ if
	\begin{enumerate}
		\item $\beta(x,y)$ is linear with respect to $x$;
		\item $\beta(x,\lambda y)=\lambda^p\beta(x,y)$;
		\item \begin{align*}\beta (x,y_1+y_2)&=\beta(x,y_1)+\beta(x,y_2)\\&-\displaystyle\sum_{\underset{h_1=y_1,~h_2=y_2}{h_i\in\{y_1,y_2\}}}\frac{1}{\sharp\{y_1\}}\sum_{j=0}^{p-2}(-1)^j\\&\times\sum_{k=1}^{j}\binom{j}{k}h_p\cdots h_{p-k-1}\alpha\left( [\cdots[x,h_{p-k}],\cdots,h_{p-j+1}],[\cdots[h_1,h_2],\cdots,h_{p-j-1}],h_{p-j} \right),\end{align*} 
	\end{enumerate}
with $\lambda\in \F$, $x,y,y_1,y_2\in L$ and $\sharp\{y_1\}$ the number of factors $h_i$ equal to $y_1$. We set 
$$ C^3_*(L,M)=\left\lbrace (\alpha,\beta),~\alpha\in C^3_{CE}(L,M),~\beta: L\times L\longrightarrow M \text{ has the $(**)$-property w.r.t } \alpha \right\rbrace.     $$

\end{defi}



\noindent In order to define the differentials, we observe that an element $\varphi\in C^1_*(L,M)$ induces a map
\begin{align*}
\ind^1(\varphi): L&\longrightarrow M\\
x&\longmapsto \varphi\left(x^{[p]}\right)-x^{p-1}\varphi(x). 
\end{align*}

\noindent We also observe that an element $(\alpha,\beta)\in C_*^2(L,M)$ induces a map
\begin{align*}
	\ind^2(\alpha,\beta):L\times L&\longrightarrow M \\
	(x,y)&\longmapsto \alpha\left( x,y^{[p]}\right) -\displaystyle\sum_{i+j=p-1}(-1)^iy^i \alpha\left( [\cdots[x,\overset{j\text{ terms}}{\overbrace{y],\cdots,y}}],y\right)+x\beta(y).
	\end{align*}

\begin{lem}[\text{\cite{EF08}}]
The map $\ind^1(\varphi)$ satisfies the $(*)$-property with respect to $d^1_{CE}\varphi$, and the map $\ind^2(\alpha,\beta)$ satisfies the $(**)$-property with respect to $d_{CE}^2\alpha$.
\end{lem}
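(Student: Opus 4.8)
The statement asserts two separate verifications, one for $\ind^1(\varphi)$ and one for $\ind^2(\alpha,\beta)$, and the plan is to treat them independently since they involve different target properties. For the first claim, I would show that the map $x\mapsto\varphi(x^{[p]})-x^{p-1}\varphi(x)$ satisfies the two defining equations of the $(*)$-property with respect to the $2$-cochain $d^1_{CE}\varphi$. The scaling equation $\omega(\lambda x)=\lambda^p\omega(x)$ should fall out quickly: using the homogeneity axiom $(\lambda x)^{[p]}=\lambda^p x^{[p]}$ from Definition \ref{restdefi} together with the linearity of $\varphi$ and the fact that the module action $x^{p-1}$ scales as $\lambda^{p-1}$, both terms acquire exactly the factor $\lambda^p$. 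The substantive part is the additivity equation: I would expand $\ind^1(\varphi)(x+y)$ by feeding the $p$-map additivity axiom $(x+y)^{[p]}=x^{[p]}+y^{[p]}+\sum_i s_i(x,y)$ into the first term and expanding $(x+y)^{p-1}\varphi(x+y)$ in the second term. The goal is to match the difference against the prescribed sum $\sum \tfrac{1}{\sharp\{x\}}\sum_k(-1)^k x_p\cdots x_{p-k+1}(d^1_{CE}\varphi)([[\cdots],x_{p-k}])$ appearing in the $(*)$-property, after substituting the Chevalley--Eilenberg formula $d^1_{CE}\varphi(u,v)=-\varphi([u,v])+u\varphi(v)-v\varphi(u)$.

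The key computational device is the rewriting of $\sum_i s_i(x,y)$ recorded in equation (\ref{si}), which expresses the whole $p$-map correction as a single sum $\sum_{x_{p-1}=y,x_p=x}\tfrac{1}{\sharp\{x\}}[x_1,[\cdots,[x_{p-1},x_p]]\cdots]$ over sequences in $\{x,y\}$ weighted by $\tfrac{1}{\sharp\{x\}}$. I expect that applying $\varphi$ to this telescoping bracket and then repeatedly using the cocycle-type identity $\varphi([u,v])=u\varphi(v)-v\varphi(u)-d^1_{CE}\varphi(u,v)$ to peel off one bracket at a time produces the alternating-sign telescoping sum with the module-action prefactors $x_p\cdots x_{p-k+1}$, precisely matching the right-hand side of the $(*)$-property. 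The residual module-action terms of the form $x^{p-1}\varphi(x)$ coming from the peeling should cancel against the $-x^{p-1}\varphi(x+y)$ expansion; this cancellation is where the weight $\tfrac{1}{\sharp\{x\}}$ and the combinatorics of the sum over $\{x,y\}$-sequences must conspire correctly.

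For the second claim, the plan is entirely analogous but with more bookkeeping, since $\ind^2(\alpha,\beta)$ must satisfy the three axioms of the $(**)$-property with respect to $d^2_{CE}\alpha$. Linearity in $x$ and the $\lambda^p$-homogeneity in $y$ should again follow directly from linearity of $\alpha$, the homogeneity of the $p$-map, and the $(*)$-property already assumed for $\beta$ (via its appearance in the $x\beta(y)$ term). The difficult axiom is additivity in the second slot, $\beta(x,y_1+y_2)$, where I would expand $\alpha(x,(y_1+y_2)^{[p]})$ using the $p$-map additivity, expand the telescoping middle sum $\sum_{i+j=p-1}(-1)^i y^i\alpha([\cdots[x,y],\cdots,y],y)$ under the substitution $y\to y_1+y_2$, and absorb the $x\beta(y_1+y_2)$ term using the assumed $(*)$-property of $\beta$. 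The main obstacle throughout is purely combinatorial: controlling the nested double sum $\sum_j(-1)^j\sum_k\binom{j}{k}$ with the mixed bracket-and-module-action words, and confirming that the binomial coefficients emerge correctly from re-expanding powers $(y_1+y_2)^i$ of the module action and from collecting repeated Leibniz-type applications of $d^2_{CE}\alpha$. I anticipate that the cleanest route is to organize everything through the single-sum reformulation (\ref{si}) and to verify the two properties by induction on the number of bracket-peeling steps, so that the heavy index manipulation reduces to a single inductive matching of coefficients rather than a term-by-term expansion.
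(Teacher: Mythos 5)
First, note that the paper itself does not prove this lemma: it is stated with a citation to \cite{EF08}, so there is no in-paper argument to compare yours against. Your outline does follow the route that Evans and Fuchs take (and essentially the only reasonable one): verify $p$-homogeneity directly from axiom 1 of Definition \ref{restdefi}, and handle additivity by substituting the $p$-map additivity axiom into $\varphi\left((x+y)^{[p]}\right)$, rewriting $\sum_i s_i(x,y)$ via equation (\ref{si}), and peeling brackets one at a time with the cocycle identity. One concrete slip: with the paper's convention, $d^1_{CE}\varphi(u,v)=\varphi([u,v])-u\varphi(v)+v\varphi(u)$, so the formula you quote for $d^1_{CE}\varphi$ has the opposite overall sign; this propagates into the $(-1)^k$ bookkeeping of the $(*)$-property and must be corrected before any terms can be matched.

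The genuine gap is that the proposal stops exactly where the content of the lemma begins. Everything you assert --- that the peeling ``produces the alternating-sign telescoping sum with the module-action prefactors $x_p\cdots x_{p-k+1}$'', that the residual module-action terms ``should cancel'' against the expansion of $(x+y)^{p-1}\varphi(x+y)$, that for $\ind^2(\alpha,\beta)$ the binomial coefficients $\binom{j}{k}$ ``emerge correctly'' --- is precisely the combinatorial identity the lemma claims, and none of it is verified. In particular, the cancellation of the $2^{p-1}$ words coming from $(x+y)^{p-1}\varphi(x)+(x+y)^{p-1}\varphi(y)$ against the module-action terms generated by repeated peeling, weighted by $\frac{1}{\sharp\{x\}}$, is the delicate point; it cannot be waved through with ``the combinatorics must conspire correctly''. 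The proposed induction on the number of peeling steps is a sensible way to organize the verification, but you would need to state the inductive hypothesis precisely (which partial sum of $\{x,y\}$-words equals which partial telescoping sum after $k$ peelings) and actually check the inductive step. As written this is a plausible plan, not a proof.
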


\begin{defi}
	The restricted differentials are defined as follows:
	\begin{align*}
		d_*^0:~&C_*^0(L,M)\longrightarrow C^1_*(L,M),~d_*^0=d_{CE}^0;\\
		d_*^1:~&C_*^1(L,M)\longrightarrow C^2_*(L,M),~d_*^1(\varphi)=\left( d_{CE}^1\varphi,\ind^1(\varphi)\right);\\
		d_*^2:~&C_*^2(L,M)\longrightarrow C^3_*(L,M),~d_*^2(\alpha,\beta)=\left( d_{CE}^2\alpha,\ind^2(\alpha,\beta)\right).
	\end{align*}
\end{defi}
If $1\leq m\leq 2$, we have $d_*^m\circ d^{m-1}_*=0$. We denote by $Z_*^m(L,M)=\Ker(d^m_*)$ the restricted $m$-cocycles and $B_*^m(L,M)=\im(d^{m-1}_*)$ the restricted $m$-coboundaries. Finally we denote the \textbf{restricted cohomology groups} by
$$ H_*^m(L,M)=Z_*^m(L,M)/B^m_*(L,M).        $$

\noindent\textbf{Remark:} $H^0_*(L,M)=H^0_{CE}(L,M)$.

\subsubsection{The case of restricted abelian Lie algebras}

In this section, let $\F$ be a field of characteristic $p>3$ and $L$ be a restricted abelian Lie algebra. In this  case, any $p$-semilinear map $f:L\rightarrow L$ gives a restricted structure on $L$. In \cite{EF08}, the authors  constructed a restricted cohomology up to order $p$ for the abelian Lie algebra. The  complex is defined as follows.

\begin{defi}
	Let $V$ be an $\F$-vector space. For $\lambda\in \F$, we consider  the Frobenius map $F:\lambda\longmapsto \lambda^p$. If the $\F$-vector space structure on $V$ is given by $\F\longrightarrow \text{End}(V)$, we denote by $\bar{V}$ the $\F$-vector space (isomorphic to $V$) which is given by pre-composing $\F\longrightarrow \text{End}(V)$ by $F^{-1}$.
\end{defi}

Let $k\leq p$,  $L$ be a restricted abelian Lie $\F$-algebra and $M$ be a restricted $L$-module. We define
$$ C_{ab}^k(L,M)=\displaystyle\bigoplus_{2t+s=k}      \Hom_{\F}\left(S^t\bar{L}\otimes\wedge^s L,M\right), $$ where $S\displaystyle\bar{L}=\bigoplus_{t\geq0}S^t\bar{L}$ is the symmetric algebra on $\bar{L}$. An element $\gamma\in C_{ab}^k(L,M)$ is a family $\gamma=\left\lbrace\gamma_t \right\rbrace_{0\leq t\leq \lfloor\frac{k}{2}\rfloor},$ where $\lfloor\cdot \rfloor$ denotes the floor function, with
\begin{align*}
\gamma_t:S^t\bar{L}\otimes\wedge^s L&\longrightarrow M,~~(2t+s=k)\\
(x_1,..., x_t, y_1,..., y_s)&\longmapsto \gamma_t(x_1,..., x_t, y_1,..., y_s).
\end{align*}

Those maps $\gamma_t$ are linear skew-symmetric in the $y$'s and $p$-semilinear symmetric in the $x$'s.

Then, we define a differential operator on $C_{ab}^k(L,M)$ by
\begin{align*}
d^k_{ab}:C_{ab}^k(L,M)&\longrightarrow C_{ab}^{k+1}(L,M)\\
\left\lbrace\gamma_t \right\rbrace_{0\leq t\leq \lfloor\frac{k}{2}\rfloor}&\longmapsto \left\lbrace\beta_t \right\rbrace_{0\leq t\leq \lfloor\frac{k+1}{2}\rfloor},
\end{align*}
with
\begin{align*}
\beta_t(x_1,..., x_t; y_1,..., y_s)&=\displaystyle\sum_{j=1}^{s}(-1)^j y_j\cdot\gamma_t(x_1,..., x_t, y_1,...,\hat{y_j},..., y_s)\\
&+\displaystyle\sum_{i=1}^{t}\gamma_{t-1}(x_1,...,\hat{x_i},..., x_t, x_i^{[p]}, y_1,..., y_s)\\
&+\displaystyle\sum_{i=1}^{t}x_i^{p-1}\cdot \gamma_{t-1}(x_1,...,\hat{x_i},..., x_t, x_i, y_1,..., y_s).
\end{align*}

\begin{prop}[\text{\cite{EF08}}]
   The complex  $\left(C_{ab}^k(L,M),d^k_{ab}\right)_{0\leq k\leq p}$ is a  cochains complex.
\end{prop}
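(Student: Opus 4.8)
The statement to prove is that $\left(C_{ab}^k(L,M),d^k_{ab}\right)_{0\leq k\leq p}$ forms a cochain complex, which amounts to verifying two things: first, that each $d^k_{ab}$ is well-defined, meaning it sends a cochain to a genuine cochain (skew-symmetric and linear in the $y$'s, $p$-semilinear symmetric in the $x$'s); and second, the defining identity $d^{k+1}_{ab}\circ d^k_{ab}=0$.

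The plan is to proceed by direct computation, organized by the structure of the differential, which has three distinct summands: a ``Chevalley-Eilenberg-type'' term $\beta_t^{(I)}=\sum_j (-1)^j y_j\cdot\gamma_t(\dots)$ acting on the skew-symmetric $y$-slots, a ``$p$-map insertion'' term $\beta_t^{(II)}=\sum_i \gamma_{t-1}(\dots,x_i^{[p]},\dots)$ that converts a symmetric $x$-slot into a $y$-slot via the restricted structure, and a ``module-action'' term $\beta_t^{(III)}=\sum_i x_i^{p-1}\cdot\gamma_{t-1}(\dots,x_i,\dots)$. First I would check well-definedness, since the abelian hypothesis and $p>3$ are precisely what make the $p$-semilinearity in the $x$'s compatible with the insertions: $x_i^{[p]}$ must land in a skew-symmetric slot while $x_i$ itself sits in a symmetric slot, and one must confirm the $p$-semilinearity $\gamma(\lambda x)=\lambda^p\gamma(x)$ propagates correctly through each term (the factor $x_i^{p-1}$ in term III combined with the $p$-semilinearity of $\gamma_{t-1}$ in the remaining $x$-slot is what produces the correct $p$-th power scaling).

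The core of the argument is $d^2=0$. I would expand $d^{k+1}_{ab}(d^k_{ab}\gamma)$ and collect the resulting nine families of terms coming from applying each of the three pieces $(I),(II),(III)$ twice, then show they cancel in matched pairs. The $(I)\circ(I)$ cancellation is the classical Chevalley-Eilenberg computation for an abelian Lie algebra, where the bracket terms vanish because $L$ is abelian and only the module-action part survives, cancelling against itself by skew-symmetry exactly as in the ordinary case (this uses that $M$ is a module). The genuinely restricted cancellations are the cross terms: $(II)\circ(III)$ and $(III)\circ(II)$ should pair against each other, and the interaction of $(I)$ with $(II)$ and $(III)$ should cancel using the compatibility of the module action with the $p$-structure, namely the defining relation $x^{[p]}\cdot m=x\cdot(\cdots(x\cdot m))$ ($p$ factors) of a restricted module. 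The delicate bookkeeping is tracking how a slot that has been turned into a $y$-slot by one application of $(II)$ or $(III)$ is then either acted on by $(I)$ in the second application, and ensuring the signs and the binomial-free structure (since here, unlike the non-abelian $(**)$-property, no binomial coefficients appear) line up.

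The main obstacle I anticipate is the $(III)\circ(III)$ and $(II)\circ(II)$ double-insertion terms, where two distinct symmetric slots $x_i,x_{i'}$ are both processed: one must verify that the symmetry of $S^t\bar L$ in these two slots, together with the $p-1$ power prefactors and the restricted module identity relating $x^{p-1}\cdot(x\cdot m)$ to $x^{[p]}\cdot m$, forces these contributions to cancel. This is precisely where the abelian hypothesis is essential, since any nonzero bracket $[x_i,x_{i'}]$ would obstruct the cancellation, and where the characteristic condition $p>3$ guarantees the relevant symmetric powers and the resolution of $\F$ used in \cite{EF08} behave as required. Since this is the proposition attributed to \cite{EF08}, I would cite their free resolution of $\F$ over the restricted enveloping algebra as the conceptual source of $d^2=0$—the complex $C_{ab}^\bullet$ is the $\Hom(-,M)$ of that resolution—so the cleanest route is to identify $d^k_{ab}$ as the dual of their resolution differential and invoke that the composite of consecutive resolution maps is zero, reducing the explicit three-by-three cancellation above to a verification that the formula for $d^k_{ab}$ matches the dualized boundary.
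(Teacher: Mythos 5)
The paper does not actually prove this proposition: it is stated with the citation \cite{EF08} and imported wholesale, so there is no in-paper argument to compare against. Judged on its own terms, your outline is correct and corresponds to the two legitimate routes: either the direct verification that $d^{k+1}_{ab}\circ d^k_{ab}=0$ by expanding the three summands of the differential and cancelling the nine composite families, or the conceptual identification of $\left(C^\bullet_{ab}(L,M),d^\bullet_{ab}\right)$ as $\Hom(-,M)$ applied to the Evans--Fuchs free resolution of $\F$ over the restricted enveloping algebra, which is how \cite{EF08} actually obtains the statement. You correctly isolate the two structural inputs: the abelian hypothesis makes the module actions of elements of $L$ on $M$ commute (so the $(I)\circ(I)$ block degenerates to the sign cancellation of the Chevalley--Eilenberg computation), and the restricted-module identity $x^{[p]}\cdot m=x\cdot(\cdots(x\cdot m))$ is what ties the $p$-map insertion term to the $x_i^{p-1}$-action term.

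One point of bookkeeping is slightly misplaced. The double-insertion blocks $(II)\circ(II)$ and $(III)\circ(III)$, which you flag as the main obstacle, in fact cancel for purely formal reasons: two distinct indices $i\neq i'$ produce each unordered pair twice with the inserted elements occupying the first two $y$-slots in opposite orders, and skew-symmetry of $\gamma$ in the $y$'s (together with commutativity of the actions for the $(III)$ case) kills them; the same-index composite cannot occur because the first application consumes the slot. The place where the restricted-module identity is genuinely needed is the one you also name, namely the cross terms of $(I)$ with $(II)$ and $(III)$: when $(I)$ acts on the slot just filled by $x_i^{[p]}$ it produces $x_i^{[p]}\cdot\gamma(\cdots)$, which must cancel against $x_i^{p-1}\cdot\bigl(x_i\cdot\gamma(\cdots)\bigr)$ coming from $(I)$ acting on the slot filled by $(III)$. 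With that correction your plan goes through; as written it is a sound sketch rather than a completed computation, and the cleanest way to close it is, as you suggest, to match $d^k_{ab}$ with the dualized boundary of the resolution in \cite{EF08}.
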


We sum this up by drawing the following diagram:
{\tiny
\begin{center}
	\begin{tikzpicture}
	\node (00) at (1,0) {$0$};
	\node (01) at (1.5,0) {$M$};
	\node (02) at (3,0) {$\Hom(L,M)$};
	\node (03) at (5.5,0) {$\Hom(\wedge^2L,M)$};
	\node (04) at (8.5,0) {$\Hom(\wedge^3L,M)$};
	\node (05) at (12.5,0) {$\Hom(\wedge^4L,M)$};
	\node (06) at (16,0) {$\Hom(\wedge^5L,M)~...$};
	\node (13) at (5.5,-1) {$\Hom(\bar{L},M)$};
	\node (14) at (8.5,-1) {$\Hom(\bar{L}\otimes L,M)$};
	\node (15) at (12.5,-1) {$\Hom(\bar{L}\otimes \wedge^2L,M)$};
	\node (16) at (16,-1) {$\Hom(\bar{L}\otimes \wedge^3L,M)~...$};
	\node (25) at (12.5,-2) {$\Hom(S^2\bar{L},M)$};
	\node (26) at (16,-2) {$\Hom(S^2\bar{L}\otimes L,M)~...$};
	\draw[->]  (00) to (01);
	\draw[->]  (01) to (02);
	\draw[->]  (02) to (03);
	\draw[->]  (03) to (04);
	\draw[->]  (04) to (05);
	\draw[->]  (05) to (06);
	\draw[->]  (13) to (14);
	\draw[->]  (14) to (15);
	\draw[->]  (15) to (16);
	\draw[->]  (02) to (13);
	\draw[->]  (03) to (14);
	\draw[->]  (04) to (15);
	\draw[->]  (05) to (16);
	\draw[->]  (25) to (26);
	\draw[->]  (14) to (25);
	\draw[->]  (15) to (26);
	\node (3) at (5.5,-0.5) {$\bigoplus$};
	\node (4) at (8.5,-0.5) {$\bigoplus$};
	\node (5) at (12.5,-0.5) {$\bigoplus$};
	\node (6) at (16,-0.5) {$\bigoplus$};
	\node (7) at (16,-1.5) {$\bigoplus$};
	\node (7) at (12.5,-1.5) {$\bigoplus$};
	\end{tikzpicture}
\end{center}}

On the first line, we recognize the usual Chevalley-Eilenberg complex.

\section{Restricted Lie-Rinehart Algebras}
Lie-Rinehart algebras over characteristic zero fields appear as algebraic analogs of Lie algebroids in differential geometry. Palais (\cite{PR61}) and Rinehart (\cite{RG63}) were among the first authors to consider those structures. For example, if $V$ is a differential manifold, let $A = \mathcal{O}_V$ be the algebra of smooth functions on $V$, and
$L= Vect(V)$ be the Lie algebra of vector fields on $V$. Then the pair $(A,L)$ carries a
Lie-Rinehart algebra structure. In positive characteristic, the Lie algebra is required to be
restricted and additional structure arise through interactions between the structure maps of the Lie-Rinehart algebra and the $p$-map of the restricted Lie algebra.
\subsection{Definitions}
In the following definition, $\F$ is an arbitrary field. Let $A$ be an $\F$-associative algebra, the space of derivations of $A$ is defined by
$$\Der(A)=\{D:A\rightarrow A \text{ linear map such that } D(ab)=D(a)b+aD(b),~a,b\in A \}.  $$
\begin{defi}\label{superdef}
	A \textbf{Lie-Rinehart algebra} is a pair $(A,L)$, where
	\begin{itemize}
		\item [$\bullet$] $L$ is a Lie algebra over $\F$, with bracket $[\cdot,\cdot]$;
		\item [$\bullet$] $A$ is an  associative and commutative $\F$-algebra,
	\end{itemize}
such that, for $x,y\in L$ and $a,b\in A$:
	\begin{itemize}
		\item There is an action $A\times L\longrightarrow L,~(a,x)\longmapsto a\cdot x,$ making $L$ an $A$-module;
		\item There is a map $L\longrightarrow \text{Der}(A),~
		x\longmapsto \left( \rho_x:a\longmapsto \rho_x(a)\right)$, which is both a morphism of Lie algebras and a morphism of A-modules, called \textbf{anchor map};
		\item There is a compatibility condition, also called \textbf{Leibniz condition}: $[x,a\cdot y]=\rho_x(a)\cdot y+a\cdot [x,y].$
	
	\end{itemize}
\end{defi}

\noindent\textbf{Example: }Let $A$ be an associative unital algebra, and $L=\Der(A)$ be its algebra of derivations. Then one can check that the pair $\left( A, \Der(A)\right)$ is a Lie-Rinehart algebra, with the action $A\curvearrowright \Der(A)$ being given by $(a\cdot D)(x)=aD(x)$ and the trivial anchor given by $\rho(D)=D,~\forall D \in \Der(A), ~a\in A,~x\in L$.

\vspace{0.3cm}

\noindent\textbf{Remark:} A computer-aided classification of Lie-Rinehart (super)algebras over the complex field $\C$ has been carried in \cite{EM22}.

\vspace{0.3cm}

Let $A$ be a  commutative associative algebra. Then it is well-known that the pair $\left(A, \Der(A) \right) $ can be endowed with a Lie-Rinehart structure. If the ground field $\F$ is of prime characteristic $p$, $\Der(A)$ has a richer structure than just a Lie algebra (\cite{DI12}). Let $D\in \Der(A)$. For $a,b\in A$, we have
	
	\begin{equation}\label{derip}  D^p(ab)=\sum_{i=0}^{p}\binom{p}{i}D^i(a)D^{p-i}(b)=aD^p(b)+D^p(a)b.   \end{equation}
	We conclude that $D^p$ is also a derivation. It is then possible to endow $\Der(A)$ with a restricted Lie algebra structure, with the $p$-map $D\longmapsto D^p$. We now recall the definition of a representation of a Lie-Rinehart algebra.

\begin{defi}[\text{\cite{SP16}}]
	Let $(A,L)$ be a Lie-Rinehart algebra, with an anchor map $\rho$. A representation of $(A,L)$ is an $A$-module $M$ endowed with a left $A$-linear Lie algebra map $\pi:L\longrightarrow \text{End}(M), ~x\longmapsto\pi_x$ such that
	\[ \pi_x(am)=a\pi_x(m)+\rho_x(a)m,~~~ a\in A,~m\in M,~ x\in L . \]
\end{defi}

The following lemma is a reformulation of Lemma 1 of \cite{HG55.1}.
\begin{lem}[\text{\cite{SP16}}]
	Let $M$ be a representation of a Lie-Rinehart algebra $(A,L)$, with an anchor map $\rho$. Then the following relation holds in $\text{End}(M),$ for $a\in A,~x\in L:$
	
	\[ \pi_{ax}^p=a^p\pi_x^p+\rho_{ax}^{p-1}(a)\pi_x.   \] 
\end{lem}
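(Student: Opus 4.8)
The plan is to rephrase the identity as an operator equation in $\text{End}(M)$ and to recognise it as the operator form of Jacobson's classical formula for the $p$-th power of $a\rho_x$. For $b\in A$ write $\mu_b\in\text{End}(M)$ for the multiplication operator $m\mapsto bm$; since $A$ is commutative, $b\mapsto\mu_b$ is an algebra homomorphism, i.e. $\mu_b\mu_c=\mu_{bc}$. Because $\pi$ is $A$-linear we have $\pi_{ax}=\mu_a\pi_x$, and the representation axiom $\pi_x(bm)=b\pi_x(m)+\rho_x(b)m$ is precisely the commutation relation $\pi_x\mu_b=\mu_b\pi_x+\mu_{\rho_x(b)}$. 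Setting $E:=\pi_{ax}=\mu_a\pi_x$ and using $A$-linearity of the anchor ($\rho_{ax}=a\rho_x$), one gets $[E,\mu_b]=\mu_a[\pi_x,\mu_b]=\mu_{\rho_{ax}(b)}$. Thus $\pi_x$ and $E$ interact with the commutative subalgebra $\{\mu_b\}$ exactly as the derivations $\rho_x$ and $\rho_{ax}=a\rho_x$ of $A$ do.

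First I would put $E^p$ into normal form by moving every $\mu$ to the left through repeated use of $\pi_x\mu_b=\mu_b\pi_x+\mu_{\rho_x(b)}$. An easy induction on $n$ gives
\[ E^n=(\mu_a\pi_x)^n=\sum_{i=1}^{n}\mu_{a_{n,i}}\pi_x^{\,i},\qquad a_{n,i}\in A, \]
where the coefficients obey the twisted recursion $a_{n,i}=a\,a_{n-1,i-1}+(a\rho_x)(a_{n-1,i})$ with $a_{1,1}=a$. The extreme coefficients are immediate: the top one satisfies $a_{n,n}=a\,a_{n-1,n-1}$, hence $a_{p,p}=a^p$; the bottom one satisfies $a_{n,1}=(a\rho_x)(a_{n-1,1})$, hence $a_{p,1}=(a\rho_x)^{p-1}(a)=\rho_{ax}^{p-1}(a)$. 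Consequently the lemma reduces to the vanishing $a_{p,i}=0$ for $2\le i\le p-1$, after which $E^p=\mu_{a^p}\pi_x^p+\mu_{\rho_{ax}^{p-1}(a)}\pi_x$, which is the asserted identity since $\mu_{a^p}\pi_x^p=a^p\pi_x^p$.

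The main obstacle is exactly this characteristic-$p$ collapse of the intermediate coefficients, which is the content of Jacobson's $p$-th power formula $(a\rho_x)^p=a^p\rho_x^p+(a\rho_x)^{p-1}(a)\rho_x$; it is the same phenomenon that makes $D\mapsto D^p$ land in $\Der(A)$ in \eqref{derip} and that governs the $s_i$-terms of Definition \ref{restdefi}. I would establish the vanishing by a universality argument: the recursion exhibits each $a_{p,i}$ as a fixed expression in $a$ and its iterated $\rho_x$-derivatives, independent of the particular $A$, $M$ and $\rho_x$, so it suffices to check it in one convenient model. In $A=\F[z]$ with $\rho_x=\pi_x=d/dz$ the operator $E=a\,d/dz$ is itself a derivation, whence $E^p$ is again a derivation (order $\le 1$) by \eqref{derip}; comparing orders in $E^p=\sum_i\mu_{a_{p,i}}(d/dz)^i$, where $(d/dz)^p=0$ and the $(d/dz)^i$ have distinct orders, forces $a_{p,i}=0$ for $2\le i\le p-1$ (alternatively, one simply invokes the classical formula, i.e.\ Lemma~1 of \cite{HG55.1}).

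Finally, every step of the normal-form reduction used only the relations $\mu_b\mu_c=\mu_{bc}$ and $\pi_x\mu_b=\mu_b\pi_x+\mu_{\rho_x(b)}$, which hold verbatim with $\pi_x$ replaced by the derivation $\rho_x$ acting on $A$ itself. Hence the coefficients $a_{n,i}\in A$ are literally those computed in $\text{End}(A)$, and the identity transports from $\Der(A)$ to $\text{End}(M)$ unchanged, yielding $\pi_{ax}^p=a^p\pi_x^p+\rho_{ax}^{p-1}(a)\pi_x$.
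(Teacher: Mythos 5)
The paper offers no proof of this lemma: it is quoted from \cite{SP16} and explicitly identified with Lemma~1 of \cite{HG55.1}, so there is no in-paper argument to compare against. Your setup is the classical one and is correctly executed as far as it goes: $\pi_{ax}=\mu_a\pi_x$, the commutation rule $\pi_x\mu_b=\mu_b\pi_x+\mu_{\rho_x(b)}$, the normal-form recursion $a_{n,i}=a\,a_{n-1,i-1}+(a\rho_x)(a_{n-1,i})$, and the extreme coefficients $a_{p,p}=a^p$, $a_{p,1}=(a\rho_x)^{p-1}(a)=\rho_{ax}^{p-1}(a)$ are all right, and you correctly isolate the entire difficulty in the vanishing $a_{p,i}=0$ for $2\le i\le p-1$.

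That vanishing is exactly where your argument has a genuine gap. The principle ``it suffices to check in one convenient model'' is false as stated: a universal differential polynomial can vanish identically on $(\F[z],d/dz)$ without being zero --- e.g.\ $\rho_x^{p}(a)$ does, since every $p$-th derivative in $\F[z]$ vanishes in characteristic $p$. To legitimize the model check you need two observations you do not make: (i) every monomial of $a_{p,i}$ with $i\ge 2$ has total derivative weight $p-i\le p-2$ (multiplication by $a$ preserves weight, applying $a\rho_x$ raises it by one), so only $a,\rho_x(a),\dots,\rho_x^{p-2}(a)$ occur; and (ii) over an infinite field of characteristic $p$ the jet map $f\mapsto\left(f(0),f'(0),\dots,f^{(p-2)}(0)\right)$ is surjective because $k!$ is a unit for $k\le p-2$, so a differential polynomial of order $\le p-2$ vanishing for all $f\in\F[z]$ is the zero polynomial; one then checks over $\overline{\F_p}$ since the universal coefficients lie in the prime field. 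With (i) and (ii), your order-comparison in the model does close the argument. The proposed alternative, ``invoke Lemma~1 of \cite{HG55.1}'', is circular, since that lemma \emph{is} the statement being proven; and its weaker form for $\rho_x$ acting on $A$ itself would only give $\sum_{i=2}^{p-1}\mu_{a_{p,i}}\rho_x^{\,i}=0$ as an operator on $A$, which does not force $a_{p,i}=0$ in $A$ unless $\mathrm{id},\rho_x,\dots,\rho_x^{p-1}$ are $A$-linearly independent on $A$ --- false in general. So the final ``transport'' paragraph does not by itself complete the proof; the universality route, properly justified as above, is the one to keep.
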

\noindent This lemma gives us the following relation between $D\in \Der(A)$ and $a\in A$, $\rho$ being the identity:
 
 \[ (aD)^p=a^pD^p+(aD)^{p-1}(a)D.   \]
 
 This lead to the following general definition of a restricted Lie-Rinehart algebra, which appears implicitly in \cite{HG55.1} and explicitly in \cite{DI12} and in \cite{RD00}.
 
 \begin{defi}
A restricted Lie-Rinehart algebra over a field $\F$ of characteristic $p$ is a pair $(A,L)$ with $A$ a  commutative associative $\F$-algebra  and $\left(L, (-)^{[p]}\right)$ be a restricted Lie $\F$-algebra satisfying the following conditions:
 	\begin{itemize}
 		\item $(A,L)$ is a Lie-Rinehart algebra, with anchor map $\rho:L\longrightarrow\Der(A)$;
 		\item $\rho(x^{[p]})=\rho(x)^p$ ($\rho$ is a restricted Lie morphism);
 		\item $(ax)^{[p]}=a^px^{[p]}+\rho(ax)^{p-1}(a)x,~a\in A,~x\in L$ (Hochschild condition).
 	\end{itemize}
 \end{defi}
 
 \noindent\textbf{Examples:}
 \begin{enumerate}
 	\item If $A$ is a  commutative associative $\F$-algebra, with unit and $\left(L,(\cdot)^{[p])}\right)$ is a restricted Lie algebra, we can always endow the pair $(A,L)$ with a restricted Lie-Rinehart structure with the trivial action and the null anchor (\cite{EM22}).
 	\item We have seen above that $(A,\Der(A))$ can be endowed with a restricted Lie-Rinehart structure, with $A$ a  commutative associative $\F$-algebra.
 	\item Let $(L, (-)^{[p]})$ be a restricted Lie algebra over $\F$. Then $(\F,L)$ can be endowed with a restricted Lie-Rinehart structure.
 	\item Restricted Lie-Rinehart algebras appear in the Galois-Jacobson theory of purely inseparable extensions of exponent 1, see \cite{DI12} for some references.
 	\item Let $(\g,[\cdot,\cdot ]_{\g})$ be a restricted Lie algebra, endowed with a Lie morphism $\gamma:\g\longrightarrow \Der(A)$, where $A$ is an associative algebra. Then $L:=A\otimes\g$ have a restricted Lie-Rinehart structure with ($a,b\in A,~x,y\in L$):
    \begin{align*}
 	[a\otimes x, b\otimes y]_L&=ab\otimes[x,y]+a\gamma(x)(b)\otimes y-b\gamma(y)(a)\otimes x;\\ 
 	\rho_L(a\otimes x)(b)&=a\gamma(x)(b);\\   
 	(a\otimes x)^{[p]_L}&=a^p\otimes x^{[p]_{\g}}-\left(a\gamma(x) \right)^{p-1}(a)\otimes x. 
 	\end{align*}

 	\item We consider the Witt algebra  $W(1)$ described in the Section \ref{basics}. We have seen that this restricted Lie algebra can be realized as $W(1)=\Der(A),$ with $A=\F[x]/(x^p-1)$. As a consequence, $\left(A,W(1)\right)$ is endowed with a restricted Lie-Rinehart structure, with anchor map $\rho=\id$.
 \end{enumerate}
 
\begin{defi}
	Let $(A,L)$ be a restricted Lie-Rinehart algebra. A representation $(\pi,M)$ of $(A,L)$ is said to be restricted if $\pi(x^{[p]})=\pi(x)^p,~\forall x\in L$.
\end{defi} 

\begin{prop}[\text{\cite{DI12}}, Jacobson's Theorem for restricted Lie-Rinehart algebras]
Let $(A,L,\rho)$ be a Lie-Rinehart algebra such that $L$ is free as an  $A$-module. Let $(u_i)_i$ be an $A$-basis of $L$. If there exists a map $u_i\mapsto u_i^{[p]}$ such that $\ad_{u_i}^p=\ad_{u_i^{[p]}}~\forall i$, then $(A,L,\rho)$ can be endowed with a restricted Lie-Rinehart structure. 

\end{prop}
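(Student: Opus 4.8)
The plan is to reduce the statement to the classical Jacobson Theorem (Theorem \ref{jacobson}) applied to $L$ regarded merely as a Lie $\F$-algebra, and then to upgrade the resulting $p$-map to a restricted Lie-Rinehart structure by checking the two remaining axioms. Since $L$ is free over $A$ with basis $(u_i)$, fixing any $\F$-basis $(c_\alpha)$ of $A$ yields an $\F$-basis $(c_\alpha u_i)_{\alpha,i}$ of $L$. To invoke Theorem \ref{jacobson} I need, for each such basis vector, an element whose adjoint operator equals $(\ad_{c_\alpha u_i})^p$; recalling that Condition 2 of Definition \ref{restdefi} is equivalent to $\ad_{x^{[p]}}=(\ad_x)^p$, this amounts exactly to the requirement that $(\ad_{c_\alpha u_i})^p$ be inner.

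The central step is therefore to prove, for every $a\in A$ and every basis index $i$, the operator identity
\[ (\ad_{au_i})^p=\ad_{w}, \qquad w:=a^p\,u_i^{[p]}+\rho_{au_i}^{p-1}(a)\,u_i, \]
where the candidate $w$ is forced by the Hochschild condition we ultimately wish to satisfy. I would establish this by a direct computation in $\text{End}_{\F}(L)$: writing $\ad_{au_i}=\mu_a\circ\ad_{u_i}-T$, where $\mu_a$ denotes multiplication by $a$ and $T(y)=\rho_y(a)\,u_i$, one expands the $p$-th power using the commutation rule $[\ad_{u_i},\mu_a]=\mu_{\rho_{u_i}(a)}$ together with the hypothesis $(\ad_{u_i})^p=\ad_{u_i^{[p]}}$. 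The bookkeeping parallels the proof of the Hochschild $p$-power formula recalled above; the extra term $T$, present precisely because the adjoint action of a Lie-Rinehart algebra fails to be $A$-linear in the acting variable, is exactly what produces the summand $\rho_{au_i}^{p-1}(a)\,u_i$.

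Granting this, Theorem \ref{jacobson} furnishes a unique $p$-map on the $\F$-Lie algebra $L$ with $(c_\alpha u_i)^{[p]}=c_\alpha^p u_i^{[p]}+\rho_{c_\alpha u_i}^{p-1}(c_\alpha)u_i$, automatically satisfying the three conditions of Definition \ref{restdefi}. It then remains to verify the two Lie-Rinehart conditions. For the Hochschild condition $(ax)^{[p]}=a^p x^{[p]}+\rho_{ax}^{p-1}(a)x$ I would pass from the basis vectors to a general $x=\sum_i a_iu_i$ by repeated use of the additive Jacobson formula (Condition 3), checking that the $s_j$-corrections on the two sides agree. For the anchor compatibility $\rho(x^{[p]})=\rho(x)^p$ I would apply the Hochschild $p$-power formula to the representation $A$ itself (with $\pi=\rho$), namely $\rho_{au_i}^p=a^p\rho_{u_i}^p+\rho_{au_i}^{p-1}(a)\rho_{u_i}$, and combine it with the intertwining relation $\rho\circ\ad_{u_i}=\ad_{\rho(u_i)}\circ\rho$.

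The main obstacle is the operator identity of the second step: because the adjoint representation is not $A$-linear, the Hochschild Lemma cannot be quoted verbatim and the $p$-th power must genuinely be expanded by hand. A second, more delicate point is the anchor compatibility on the basis: the hypothesis only guarantees $\ad_{u_i^{[p]}}=(\ad_{u_i})^p$, so a priori $\rho(u_i^{[p]})$ and $\rho(u_i)^p$ differ by a derivation centralizing $\rho(L)$; one must check that this discrepancy vanishes—or absorb it by modifying $u_i^{[p]}$ by a central element of $L$—using the Leibniz compatibility of the anchor, and it is here that the freeness of $L$ over $A$ is genuinely used.
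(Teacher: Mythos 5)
The paper does not prove this proposition at all: it is imported verbatim from \cite{DI12}, so there is no internal proof to measure your argument against, and your attempt has to be judged on its own. The overall architecture you propose is the standard one and is sound: reduce to Jacobson's Theorem \ref{jacobson} over $\F$ via the basis $(c_\alpha u_i)$ (legitimate here since the paper assumes finite dimensionality and freeness gives such a basis), let the Hochschild condition dictate the value of $(au_i)^{[p]}$, then verify the two Lie--Rinehart axioms. Your ``second delicate point'' about the anchor also resolves exactly as you suspect: both $(\ad_{u_i})^p$ and $\ad_{u_i^{[p]}}$ are $\delta$-derivations of the $A$-module $L$ (for $\delta=\rho_{u_i}^p$ and $\delta=\rho_{u_i^{[p]}}$ respectively, by the Leibniz condition and the vanishing of the middle binomial coefficients mod $p$), so their equality forces $\rho_{u_i}^p(b)\,y=\rho_{u_i^{[p]}}(b)\,y$ for all $b\in A$, $y\in L$, and faithfulness of the free module $L$ gives $\rho(u_i^{[p]})=\rho(u_i)^p$ outright; no correction by central elements is needed.

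The genuine gap is that essentially the whole content of the proposition sits in the operator identity $(\ad_{au_i})^p=\ad_{w}$ with $w=a^pu_i^{[p]}+\rho_{au_i}^{p-1}(a)u_i$, and you only assert it. The claim that the expansion of $(\mu_a\circ\ad_{u_i}-T)^p$ ``parallels'' Hochschild's computation is not convincing as stated: the perturbation $T(y)=\rho_y(a)u_i$ interacts with $\mu_a\circ\ad_{u_i}$ through the bracket (e.g. $T(a[u_i,y])=a\rho_{[u_i,y]}(a)u_i$), and you give no organizing principle by which the $2^p$ words in the expansion recombine into exactly the two claimed summands. The identity is true, but the clean way to get it is to pass to the universal enveloping algebroid $U(A,L)$ (into which $L$ embeds precisely because it is $A$-free, by Rinehart's PBW theorem), where $\iota(au_i)=\iota(a)\iota(u_i)$ and Hochschild's associative identity $(ax)^p=a^px^p+((a\rho_x)^{p-1}(a))x$ holds in the Ore subalgebra generated by $A$ and $x$. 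Restricting the inner derivation of $(au_i)^p$ to $L$, using $\rho_y(a^p)=pa^{p-1}\rho_y(a)=0$ to commute $a^p$ past $y$, and invoking the hypothesis only to replace $\ad^{U}_{u_i^p}\vert_L$ by $\ad_{u_i^{[p]}}$, yields the identity with no bookkeeping. The same device disposes of your last step: the compatibility between the additive Jacobson corrections $s_j$ and the additivity of $a\mapsto(a\rho_{u_i})^{p-1}(a)$, which you propose to ``check'' termwise, is again read off from the $p$-power map of the associative algebra $U(A,L)$ rather than verified by hand.
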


\subsection{Restricted multiderivations and Lie-Rinehart structures}

Let $A$ be an associative algebra and $L$ be a restricted Lie algebra such that $A$ acts on $L$.

\begin{defi}
Let $V$ be an $\F$-vector space. A map $\varphi: V\rightarrow V$ is called \textbf{$p$-homogeneous} if it satisfies $\varphi(\lambda x)=\lambda^p\varphi(x)$, for all $x\in L$ and $\lambda\in \F$.
\end{defi}

\begin{defi}\label{restmulti}
A \textbf{restricted multiderivation} (of order $1$) is a pair $(m,\w)$, where $m:L\times L\rightarrow L$ is a skew-symmetric bilinear map, $\w:L\rightarrow L$ is a $p$-homogeneous map  satisfying
\begin{equation}
    \w(x+y)=\w(x)+\w(y)+\displaystyle\sum_{i=1}^{p-1}\theta_i(x,y),
\end{equation}
where $i\theta_i(x,y)$ is the coefficient of $Z^{i-1}$ in $\left(\tilde{\ad}_m(Z x+y)\right)^{p-1}(x)$, with $
\tilde{\ad}_m(x)(y):=m(x,y)$, such that it exists a map $\sigma:L\rightarrow \Der(A)$ called \textbf{restricted symbol map} which must satisfy the following four conditions, for all $x,y\in L$ and $a\in A$:
\begin{equation}\label{jesaispas1}
    \sigma(ax)=a\sigma(x);
\end{equation}
\begin{equation}\label{jesaispas2}
    m(x,ay)=am(x,y)+\sigma(x)(a)y;
\end{equation}
\begin{equation}\label{jesaispas3}
    \sigma\circ\w(x)=\sigma(x)^p;
\end{equation}
\begin{equation}\label{jesaispas4}
    \w(ax)=a^p\w(x)+\sigma(ax)^{p-1}(a)x.
\end{equation}
\end{defi}

\begin{rmq}
    If Conditions (\ref{jesaispas3}) and (\ref{jesaispas4}) are not satisfied, then we say that $m$ (without $\w$) is an ordinary multiderivation (see \cite{MM20} and \cite{EM22}).
\end{rmq}

\begin{prop}\label{ontoonep}
There is a one-to-one correspondence between  restricted Lie-Rinehart structures on the pair $(A,L)$ and restricted multiderivations $(m,\w )$ of order $1$ satisfying 
\begin{equation}\label{jesaispas5}
    m(x,m(y,z))+m(y,m(z,x))+m(z,m(x,y))=0
\end{equation}
and
\begin{equation}\label{jesaispas6}
    m(x,\w(y))=m(m(\cdots m(x,\overset{p\text{ terms}}{\overbrace{y),y),\cdots,y}})
\end{equation}
\end{prop}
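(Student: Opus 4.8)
The plan is to exhibit two explicit, mutually inverse assignments and to verify that they match the defining conditions of the two notions term by term. Given a restricted Lie-Rinehart structure on $(A,L)$, I set $m:=[\cdot,\cdot]$, $\w:=(\cdot)^{[p]}$ and $\sigma:=\rho$; conversely, given a restricted multiderivation $(m,\w)$ with symbol $\sigma$ satisfying (\ref{jesaispas5}) and (\ref{jesaispas6}), I declare the bracket, the $p$-map and the anchor on $(A,L)$ to be $m$, $\w$ and $\sigma$ respectively. Since both assignments leave the underlying structure maps untouched, they are inverse to one another as soon as each is shown to land in the correct class; thus the whole proof reduces to matching axioms.

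For the forward direction almost every requirement is a direct transcription. Skew-symmetry and bilinearity of $m$, and $p$-homogeneity of $\w$, come from the Lie bracket and from Condition 1 of Definition \ref{restdefi}. The crucial observation for the additivity of $\w$ is that, with $m=[\cdot,\cdot]$, one has $\tilde{\ad}_m=\ad$, so the maps $\theta_i$ of Definition \ref{restmulti} coincide with the $s_i$ of Definition \ref{restdefi}; hence $\w(x+y)=\w(x)+\w(y)+\sum_{i=1}^{p-1}\theta_i(x,y)$ is precisely Condition 3 of Definition \ref{restdefi}. The four symbol conditions (\ref{jesaispas1})--(\ref{jesaispas4}) are respectively the $A$-linearity of the anchor, the Leibniz condition, the restricted-morphism identity $\rho(x^{[p]})=\rho(x)^p$, and the Hochschild condition, while (\ref{jesaispas5}) is the Jacobi identity and (\ref{jesaispas6}) is Condition 2 of Definition \ref{restdefi}.

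For the converse I must recover a genuine restricted Lie-Rinehart structure from $(m,\w,\sigma)$. Skew-symmetry together with (\ref{jesaispas5}) makes $m$ a Lie bracket; the $\theta_i=s_i$ identification together with the $p$-homogeneity and the additivity built into Definition \ref{restmulti}, and (\ref{jesaispas6}), makes $\w$ a $p$-map, so that $(L,m,\w)$ is a restricted Lie algebra. Conditions (\ref{jesaispas1})--(\ref{jesaispas4}) then supply the $A$-linearity of $\sigma$, the Leibniz rule, the identity $\sigma(\w(x))=\sigma(x)^p$ and the Hochschild condition. The single Lie-Rinehart axiom not literally present in the data is that the anchor $\sigma$ be a morphism of Lie algebras, and deriving this is the heart of the argument.

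To obtain $\sigma(m(x,y))=[\sigma(x),\sigma(y)]$ I would rewrite the Jacobi identity (\ref{jesaispas5}) in the form $m(m(x,y),z)=m(x,m(y,z))-m(y,m(x,z))$, replace $z$ by $az$ for $a\in A$, and expand every bracket using the Leibniz rule (\ref{jesaispas2}). The purely $A$-linear contributions reproduce the Jacobi identity for $x,y,z$ and cancel, the mixed terms $\sigma(x)(a)\,m(y,z)$ and $\sigma(y)(a)\,m(x,z)$ cancel in pairs, and comparing the coefficients of $z$ leaves $\sigma(m(x,y))(a)\cdot z=[\sigma(x),\sigma(y)](a)\cdot z$ for all $a\in A$ and $z\in L$. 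The main obstacle is exactly this last step: upgrading this identity on the action on $L$ to an equality in $\Der(A)$ requires cancelling $z$, i.e.\ using that no nonzero element of $A$ annihilates the $A$-module $L$ (the non-degeneracy implicit in the well-definedness of the symbol map). Once the Lie-morphism property is secured, all axioms are verified, and the mutual inverseness noted at the outset completes the bijection.
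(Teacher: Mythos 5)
Your proposal is correct and follows essentially the same route as the paper: both directions are the identity on the underlying data $(m,\w,\sigma)\leftrightarrow([\cdot,\cdot],(\cdot)^{[p]},\rho)$, and the proof reduces to matching the axioms of Definition \ref{restmulti} together with (\ref{jesaispas5})--(\ref{jesaispas6}) against those of a restricted Lie-Rinehart algebra. You go further than the paper's proof (which dismisses the converse as ``easy to see'') by actually deriving the Lie-morphism property $\sigma(m(x,y))=[\sigma(x),\sigma(y)]$ of the anchor from the Jacobi identity and the Leibniz rule (\ref{jesaispas2}); your remark that the final cancellation of $z$ requires $L$ to be a faithful $A$-module is a genuine point that the paper leaves implicit.
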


\begin{proof}
    Suppose that $\left(A,L,[\cdot,\cdot], (\cdot)^{[p]}, \rho\right)$ is a restricted Lie-Rinehart algebra. We set $m:=[\cdot,\cdot]$, $\w:=(\cdot)^{[p]}$ and $\sigma:=\rho$. Then, Equation (\ref{jesaispas5}) is nothing more than the Jacobi identity of the Lie bracket and Equation (\ref{jesaispas6}) is satisfied as well, due to the definition of the $p$-map on $L$. Moreover, it is easy to see that $\sigma=\rho$ is a suitable symbol map with respect to $(m,\w)$, so the pair $(m,\w)$ is a $1$-order restricted multiderivation with symbol map $\sigma$ defined above.
    Conversely, if $(m,\w)$ is a $1$-order restricted multiderivation with symbol map $\sigma$, it is easy to see that $\left(A,L,m,\w,\sigma\right)$ is a restricted Lie-Rinehart algebra.

\end{proof}

\begin{rmq}
    If Condition (\ref{jesaispas6}) is not satisfied, then there is a one-to-one correspondence between (ordinary) multiderivations $m$ of order $1$ satisfying Equation (\ref{jesaispas5}) and Lie-Rinehart algebras on $(A,L)$ (see \cite{MM20} and \cite{EM22}). 
\end{rmq}

\section{Deformation theory of restricted Lie-Rinehart algebras}

Let $\F$ be a field of characteristic $p>2$. In \cite{EF08}, Evans and Fuchs have sketched a deformation theory of restricted Lie algebras. They introduced the notions of infinitesimal restricted deformations and  equivalence of such deformations, in order to prove that the second restricted cohomology group classifies the infinitesimal restricted deformations up to equivalence (Theorem 6 of \cite{EF08}). Here we investigate formal restricted deformations of restricted Lie-Rinehart algebras. Proposition \ref{ontoonep} allows us to consider the deformations of restricted multiderivations of order $1$ to study deformations of restricted Lie-Rinehart algebras. We follow to this end, the Gerstenhaber's approach, where the base field $\F$ is replaced by the formal power series ring  $\F[[t]]=\{ \sum_{i\geq 0} a_it^i, a_i\in \F\}$.

\subsection{Restricted formal deformations}

Let $\left(A, L, [\cdot,\cdot],(-)^{[p]}, \rho \right)$ be a restricted Lie-Rinehart algebra. We aim to deform the Lie bracket, the $p$-map and the anchor map. Let $(m,\w)$ be the associated restricted multiderivation.

\begin{defi}\label{defidefop}
	A formal deformation of $(m,\w)$ is given, for all  $x,y\in L$, by two maps
	
	$$m_t: (x,y)\longmapsto \displaystyle\sum_{i\geq 0} t^i m_i(x,y),~~\w_t:x\longmapsto \displaystyle\sum_{j\geq 0}t^j\w_j(x),$$
with $m_0=m$, $\w_0=\w$, and $(m_i,\w_i)$ are restricted multiderivations.
Moreover, the four following conditions must be satisfied, for all $x,y,z\in L,$ and $a\in A$:
	
	\begin{equation}\label{jacomulti}
		 m_t(x,m_t(y,z))_t+m_t(y,m_t(z,x))+m_t(z,m_t(x,y))=0;
		 \end{equation}
		 \begin{equation}\label{pmapmulti}
		 m_t\left(x,\w_t(y)\right)_t=m_t( m_t(\cdots m_t(x,  \overset{p\text{ terms}}{\overbrace{y),y),\cdots,y}});
	\end{equation}
		\begin{equation}\label{condenplus1}
	    \sum_{i=0}^k\sigma_i\left(\w_{k-i}(x)\right)(a)=\sum_{i_1+\cdots+i_p=k}\sigma_{i_1}(x)\circ\cdots\circ\sigma_{i_p}(x)(a),~~\forall k\geq 0;
	\end{equation}
	\begin{equation}\label{condenplus2}
	    \sigma_k(x)^{p-1}=\sum_{i_1+\cdots+i_{p-1}=k}\sigma_{i_1}(x)\circ\sigma_{i_2}(x)\circ \cdots \sigma_{i_{p-1}}(x)~~\forall k\geq 0;
	\end{equation}
\end{defi}

\begin{rmq}
\hspace{0.2cm}
	\begin{enumerate}
	\item $m_t$ extends to the formal space  $L[[t]]=\{ \sum_{i\geq 0}  t^ix_i, x_i\in L\}$ by $\F[[t]]$-linearity.
	\item $\w_t$ extends to $L[[t]]$ by $p$-homogeneity and by using the formula $$\w_t(x+ty)=\w_t+\w_t(ty)+\displaystyle\sum_{k=1}^{p-1}\tilde{s}(x,ty),$$ with $k\tilde{s}(x,ty)$ being the coefficient of $Z^{p-1}$ in the formal expression $m_t(xZ+ty,x)$.
\end{enumerate}
\end{rmq}

\begin{rmq}\label{precisionsp}
	Condition $(\ref{jacomulti})$ ensures that the deformed object $\left(A, L[[t]], m_t,\sigma_t\right)$ is a Lie-Rinehart algebra. Conditions $(\ref{jacomulti})$ and $(\ref{pmapmulti})$ ensure that $\left( L[[t]], m_t,\w_t\right)$ is a restricted Lie algebra. Moreover, if the conditions $(\ref{condenplus1})$ and $(\ref{condenplus2})$ are satisfied, then $\left(A, L[[t]], m_t,\w_t,\sigma_t\right)$ is a restricted Lie-Rinehart algebra.
    Indeed, suppose that Equation (\ref{jacomulti}) is satisfied. It is straightforward to verify that $m_t$ is a multiderivation with symbol map $\sigma_t$. By Proposition 4.3 of \cite{EM22}, $\left(A, L[[t]], m_t,\sigma_t\right)$ is a Lie-Rinehart algebra. If, moreover,  Equation (\ref{pmapmulti}) is satisfied, then $\w_t$ is a $p$-map with respect to $m_t$. As a consequence, $\left( L[[t]], m_t,\w_t\right)$ is a restricted Lie algebra.
    Now, suppose that the four equations (\ref{jacomulti}), (\ref{pmapmulti}), (\ref{condenplus1}) and (\ref{condenplus2}) are satisfied. We already know that $\left(A, L[[t]], m_t,\sigma_t\right)$ is a Lie-Rinehart algebra and that $\left( L[[t]], m_t,\w_t\right)$ is a restricted Lie algebra. It remains to verify that $\w_t$ and $\sigma_t$ satisfy Equations (\ref{jesaispas3}) and (\ref{jesaispas4}) of the Definition \ref{restmulti}. Suppose that $(\ref{condenplus1})$ holds. We have
    
    $$\sum_{k\geq 0}t^k\sum_{i=0}^k\sigma_i\left(\w_{k-i}(x)\right)(a)=\sum_{k\geq 0}t^k\sum_{i_1+\cdots+i_p=k}\sigma_{i_1}(x)\circ\cdots\circ\sigma_{i_p}(x)(a),$$ which is equivalent to
    $$\sum_{i,j\geq 0}t^k\sum_{i=0}^k\sigma_i\left(\w_{j}(x)\right)(a)=\left(\sum_{k\geq0}\sigma_k(x)\right)^p(a).$$ Thus,
    $$\sigma_t\left(\w_t(x)\right)(a)=\sigma_t(x)^p(a).$$
    Condition $(\ref{condenplus2})$ is obtained in a similar way.
    \end{rmq}

\begin{defi}
    If Conditions $(\ref{condenplus1})$ and $(\ref{condenplus2})$  of  Definition $\ref{defidefop}$ are not satisfied, the deformation is called \textbf{weak deformation}.
\end{defi}

\noindent\textbf{Notation.} If $f$ is a map taking two variables, we will use the notation $$f[x_1,\cdots,x_n]:=f(f(\cdots f(x_1,x_2),x_3),\cdots,x_n).$$

\begin{rmq}
    All of the following results concerning restricted deformations, equivalences and obstructions are also valid for restricted Lie algebras, by forgetting the Lie-Rinehart structure. Weak deformations of a restricted  Lie-Rinehart algebra correspond to deformations of  restricted Lie algebras.
\end{rmq}

\begin{lem}
	Let $\left(m_t, \w_t \right)$ be a restricted deformation of $\left(m,\w \right)$. Then $\w_1$ has the $(*)$-property with respect to $m_1$.
\end{lem}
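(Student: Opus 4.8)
The plan is to check the two defining relations of the $(*)$-property for $\omega_1$ with respect to $m_1$ by extracting the coefficient of $t^1$ from the structure equations of the deformation. The first relation, $p$-homogeneity $\omega_1(\lambda x)=\lambda^p\omega_1(x)$, is immediate: since $(m_1,\omega_1)$ is a restricted multiderivation (Definition \ref{defidefop}), the map $\omega_1$ is $p$-homogeneous by Definition \ref{restmulti}. Equivalently, it is the $t^1$-coefficient of the identity $\omega_t(\lambda x)=\lambda^p\omega_t(x)$, which holds because $\omega_t$ is the $p$-map of the restricted Lie algebra $\left(L[[t]],m_t,\omega_t\right)$ over $\F[[t]]$.

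For the second (additive) relation, I would start from Remark \ref{precisionsp}: conditions (\ref{jacomulti}) and (\ref{pmapmulti}) make $\left(L[[t]],m_t,\omega_t\right)$ a restricted Lie algebra, so $\omega_t$ satisfies the additivity axiom (Condition 3 of Definition \ref{restdefi}) with respect to $m_t$. Writing the correction term through formula (\ref{si}), this reads
\[ \omega_t(x+y)=\omega_t(x)+\omega_t(y)+\sum_{\underset{x_{p-1}=y,\,x_p=x}{x_k\in\{x,y\}}}\frac{1}{\sharp\{x\}}\,m_t\big[x_1,\dots,x_p\big], \]
where $m_t[x_1,\dots,x_p]$ denotes the iterated $m_t$-bracket. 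Substituting $m_t=m+tm_1+O(t^2)$ and $\omega_t=\omega+t\omega_1+O(t^2)$ and collecting powers of $t$, the $t^0$-part is exactly the additivity axiom of the undeformed structure $(m,\omega)$ and holds automatically, while the $t^1$-part yields
\[ \omega_1(x+y)-\omega_1(x)-\omega_1(y)=\Big[\text{coefficient of }t^1\text{ in }\sum\tfrac{1}{\sharp\{x\}}\,m_t[x_1,\dots,x_p]\Big]. \]

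The heart of the argument is to identify this right-hand side with the $(*)$-correction term built from $\varphi=m_1$ and the adjoint action of $m$. Each bracket $m_t[x_1,\dots,x_p]$ involves $p-1$ applications of $m_t$, so its $t^1$-coefficient is the sum, over these $p-1$ positions, of the bracket in which a single factor $m$ is replaced by $m_1$, all others remaining $m$. Reorganizing each such term into the nested form used by Evans--Fuchs and resolving the factors lying above the inserted $m_1$ as iterated adjoint actions — using $m(v,z)=[v,z]=-z\cdot v$ at each outer step — produces the alternating signs $(-1)^k$ and the iterated action $x_p\cdots x_{p-k+1}\cdot(-)$, while the factors below the insertion assemble into the nested argument of $m_1$. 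Summing over the insertion depth and over the admissible multi-indices then reproduces the correction term of the $(*)$-property, which together with $p$-homogeneity gives the claim.

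I expect the main obstacle to be precisely this last identification: matching the bookkeeping of (\ref{si}) — iterated $m_t$-brackets weighted by $1/\sharp\{x\}$, inherited in right-nested form from $\ad^{p-1}_{Zx+y}(x)$ — with the Evans--Fuchs normal form of the $(*)$-correction, which is left-nested inside $\varphi$ and carries alternating signs together with iterated module actions. Reconciling the two requires careful use of skew-symmetry and of the Jacobi identity for $m$ to pass between right- and left-nested brackets, and to check that the sign pattern, the index ranges, and the weights $1/\sharp\{x\}$ all match; the remaining steps are the formal extraction of coefficients.
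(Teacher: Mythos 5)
Your proposal is correct and follows essentially the same route as the paper: the $p$-homogeneity of $\w_1$ is read off from the $t^1$-coefficient of $\w_t(\lambda x)=\lambda^p\w_t(x)$, and the additive relation is obtained by expanding $\w_t(x+y)=\w_t(x)+\w_t(y)+\sum\frac{1}{\sharp\{x\}}m_t[x_1,\dots,x_p]$ modulo $t^2$, identifying the $t^1$-part of each iterated bracket as a sum over the positions where a single $m_1$ is inserted, and rewriting the outer factors as iterated adjoint actions with signs $(-1)^k$ via $m(v,z)=-z\cdot v$. The bookkeeping issue you flag (right-nested form from $\ad^{p-1}_{Zx+y}(x)$ versus the left-nested Evans--Fuchs normal form) is real but harmless, since for odd $p$ skew-symmetry reverses the nesting with sign $(-1)^{p-1}=1$; the paper passes over this silently.
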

	
\begin{proof}
	Let $\lambda\in \F,~x\in L$.
	\begin{enumerate}
		\item[$\bullet$] $\w_t(\lambda x)=\lambda^p \w_t(x)\Leftrightarrow \w(\lambda x)+t\w_1(\lambda x)+\displaystyle\sum_{i\geq 2}t^i\w_i(\lambda x)=\lambda^p\left(\w(x)+t\w_1(x)+ \displaystyle\sum_{i\geq 2}t^i\w_i(x)\right). $ By collecting the coefficients of $t$ in the above equation, we obtain $\w_1(\lambda x)=\lambda^p\w_1(x)$.
		
		\item[$\bullet$] The following computations are made modulo $t^2$. We have
		\begin{equation}
		m_t[x_1,\cdots,x_p]=m[x_1,\cdots,x_p]+t\left(\sum_{k=0}^{p-2}m\left[m_1(m[x_1,\cdots,x_{p-k-1}],x_{p-k}),x_{p-k+1},\cdots,x_p \right]  \right). 
		\end{equation}
		
		We denote by $(\maltese)$ the conditions $( x_i\in\{x,y\},~x_1=x,~x_2=y )$.   Using the definition of $\w_t$ and Equation (\ref{si}), we have
		\begin{align*}
		\hspace{-1cm}\w_t(x+y)&=\w_t(x)+\w_t(y)+\sum_{(\maltese)} \frac{1}{\sharp\{x\}}m_t\left[x_1,x_2,\cdots,x_p \right]\\
		&=\w(x)+\w(y)+\sum_{(\maltese)}\frac{1}{\sharp\{x\}}m\left[x_1,x_2,\cdots,x_p \right]\\&\hspace{1cm}+t\sum_{(\maltese)}\left( \sum_{k=0}^{p-2}m\left[m_1(m[x_1,\cdots,x_{p-k-1}],x_{p-k}),x_{p-k+1},\cdots,x_p\right]             \right)\mod(t^2). 
		\end{align*}
		
We also have  $\w_t(x+y)=\w(x+y)+t\w_1(x+y)\mod(t^2).$ If we compare the coefficients in the above expressions, we obtain
\begin{align}
\w(x+y)&=\w(x)+\w(y)+\sum_{(\maltese)}\frac{1}{\sharp\{x\}}m[x_1,\cdots,x_p] ;\\
\nonumber
\w_1(x+y)&=\w_1(x)+\w_1(y)+\sum_{(\maltese)}\left( \sum_{k=0}^{p-2}m\left[m_1(m[x_1,\cdots,x_{p-k-1}],x_{p-k}),x_{p-k+1},\cdots,x_p\right]\right)\\
&=\w_1(x)+\w_1(y)+\sum_{(\maltese)}\left( \sum_{k=0}^{p-2}(-1)^{k}m\left(x_p x_{p-1}\cdots x_{p-k-1}m_1([x_1,\cdots,x_{p-k-1}],x_{p-k})\right)\right).\end{align}
\end{enumerate}

We conclude that $\w_1$ has the $(*)$-property with respect to $m_1$.
	
\end{proof}
\noindent\textbf{Remark on the adjoint action.} we recall that $m=[\cdot,\cdot]~$; and if $x,y\in L$, we have $\ad_y(x)=[y,x]=-[x,y]$, so $y\cdot x=\ad_y(x)=-m(x,y)$.

\bigskip

\noindent\textbf{Example: the case $p=3$.} Let $(m,\w)$ be a restricted multiderivation associated to a restricted Lie-Rinehart algebra $\left(A,L, [\cdot,\cdot],(\cdot)^{[p]},\rho\right)$. Consider an infinitesimal deformation $(m_t,\w_t)$ of $(m,\w)$ given by $m_t=m+tm_1,~\w_t=\w+t\w_1$, and let $x,y\in L$.

\begin{align*}
\w_t(x+y)-\w_t(x)-\w_t(y)=&\sum_{\underset{x_{1}=x,~x_2=y}{x_k\in\{x,y\}}}\frac{1}{\sharp\{x\}}m_t(m_t(x_1,x_2),x_3)\\
=&~2m_t(m_t(x,y),x)+m_t(m_t(x,y),y)\\
=&~2\left([[x,y],x]+tm_1([x,y],x)+t[m_1(x,y),x] \right)\\&~+[[x,y],y]+tm_1([x,y],y)+t[m_1(x,y),y].\\
\end{align*}
Collecting the coefficients of $t$ on both sides, we obtain
$$\w_1(x+y)-\w_1(x)-\w_2(x)=2m_1([x,y],x)+2m_1[m_1(x,y),x]+m([x,y],y)+[m(x,y),y],$$ which is exactly saying that $\w_1$ satisfies the $(*)$-property with respect to $m_1$.

\begin{thm}\label{restdefo}
		Let $\left(m_t, \w_t \right)$ be a restricted deformation of $\left(m,\w \right)$. Then $(m_1,\w_1)$ is a 2-cocyle of the restricted cohomology.
\end{thm}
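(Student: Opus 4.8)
The plan is to use the splitting $d_*^2(\alpha,\beta)=\left(d_{CE}^2\alpha,\,\ind^2(\alpha,\beta)\right)$, which reduces the claim that $(m_1,\w_1)$ is a restricted $2$-cocycle to two independent vanishing statements: $d_{CE}^2 m_1=0$ and $\ind^2(m_1,\w_1)=0$. First I would note that $(m_1,\w_1)$ is a genuine element of $C_*^2(L,L)$: the map $m_1$ is skew-symmetric bilinear, being the first-order term of the restricted multiderivation $m_t$, and the preceding lemma already establishes that $\w_1$ has the $(*)$-property with respect to $m_1$. Both required identities will then be extracted by expanding the two defining equations (\ref{jacomulti}) and (\ref{pmapmulti}) of the deformation as power series in $t$ and collecting the coefficient of $t^1$; in each case the $t^0$-coefficient merely reproduces the undeformed structure identity and carries no information.

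For the Chevalley-Eilenberg component I would substitute $m_t=m+tm_1+O(t^2)$ into (\ref{jacomulti}). The $t^0$-term is the Jacobi identity for $m$, while the $t^1$-term reads
\begin{equation*}
m\bigl(x,m_1(y,z)\bigr)+m_1\bigl(x,m(y,z)\bigr)+\text{(cyclic in }x,y,z)=0.
\end{equation*}
Rewriting each term through the adjoint-action convention $y\cdot x=-m(x,y)$ recorded just after the previous lemma, this is precisely $d_{CE}^2 m_1=0$.

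The restricted component is where the real work lies. Expanding the left-hand side of (\ref{pmapmulti}), the coefficient of $t^1$ in $m_t(x,\w_t(y))$ is $m_1(x,y^{[p]})+m(x,\w_1(y))$, and since $m(x,\w_1(y))=x\cdot\w_1(y)$ this already matches the two end terms $\alpha(x,y^{[p]})+x\beta(y)$ of $\ind^2$ with $(\alpha,\beta)=(m_1,\w_1)$. For the right-hand side $m_t[x,y,\dots,y]$ ($p$ copies of $y$, hence $p$ nested brackets), the first-order term is the sum, over the position $\ell\in\{1,\dots,p\}$ at which a single $m$ is replaced by $m_1$, of
\begin{equation*}
m\Bigl(\cdots m\bigl(m_1([\cdots[x,y],\cdots,y],y),\,y\bigr)\cdots,\,y\Bigr),
\end{equation*}
where the inner bracket $[\cdots[x,y],\cdots,y]$ carries $\ell-1$ copies of $y$. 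Using $m(\cdot,y)=-\,y\cdot(\cdot)$ repeatedly turns the $p-\ell$ outer brackets into the factor $(-1)^{p-\ell}y^{p-\ell}$, and reindexing by $i=p-\ell$, $j=\ell-1$ (so that $i+j=p-1$) collapses the sum to $\sum_{i+j=p-1}(-1)^i y^i\,m_1([\cdots[x,y],\cdots,y],y)$ with $j$ copies of $y$ inside. Equating the two first-order expressions then yields exactly $\ind^2(m_1,\w_1)(x,y)=0$.

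I expect the main obstacle to be the bookkeeping in this last expansion: correctly enumerating the positions of the single $m_1$ inside the $p$-fold nested bracket, converting the iterated right-multiplications $m(\cdot,y)$ into powers $y^i$ of the adjoint action with the correct signs, and verifying that the reindexing $\ell\mapsto(p-\ell,\ell-1)$ reproduces precisely the range $i+j=p-1$ and the inner bracket lengths demanded by the definition of $\ind^2$. Once both components are shown to vanish, I conclude $d_*^2(m_1,\w_1)=0$, i.e. $(m_1,\w_1)\in Z_*^2(L,L)$, as claimed.
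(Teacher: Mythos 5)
Your proposal is correct and follows essentially the same route as the paper: both arguments expand the deformation equations (\ref{jacomulti}) and (\ref{pmapmulti}) in powers of $t$, use the preceding lemma to place $(m_1,\w_1)$ in $C_*^2(L,L)$, obtain $d_{CE}^2 m_1=0$ from the $t^1$-coefficient of the Jacobi identity, and obtain $\ind^2(m_1,\w_1)=0$ by enumerating the single insertion of $m_1$ into the $p$-fold nested bracket and converting the outer brackets into $(-1)^i y^i$ via the adjoint-action convention. The reindexing $\ell\mapsto(i,j)=(p-\ell,\ell-1)$ you describe is exactly the paper's sum over $i+j=p-1$, so no gap remains.
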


\begin{proof}
	By the previous Lemma, $(m_1,\w_1)\in C^2_*(L,L)$.
	 By the ordinary theory of deformations, we already have that $m_1\in Z^2_{CE}(L,L)$. It remains to show that the induced map $\ind^2(m_1,\w_1)$ vanishes. We expand the equation 
	 \begin{equation}
	 m\left(x,\w_t(y) \right)_t=m_t( m_t(\cdots m_t(x,  \overset{p\text{ terms}}{\overbrace{y),y),\cdots,y}}).
	 \end{equation}
On one hand, we have 

\begin{align*}
m_t\left(x,\w_t(y) \right)&=m\left(x,\w_t(y) \right)+t m_1(x,\w_t(y))+\sum_{i\geq 2}t^im_i(x,\w_t(y))\\
&=m\left(x,\w(y) \right)+t m(x,\w_1(y))+t m_1(x,\w(y)) \mod(t^2).
\end{align*}

On the other hand, we have
\begin{align*}
m_t( m_t(\cdots m_t(x,  \overset{p\text{ terms}}{\overbrace{y),y),\cdots,y}})&
=\displaystyle \sum_{i_p}\cdots\sum_{i_1}t^{i_1+\cdots+i_p}m_{i_p}\left(m_{i(p-1)}(\cdots(m_{i_1}(x,y),y),\cdots,y),y \right)\\
&=m[x,y, \cdots ,y]+t\sum_{\underset{\sharp\left\lbrace k,~i_k=1 \right\rbrace=1 }{i_k=0 \text{ or } 1}}m_{i_p}\left(\cdots(m_{i_1}(x,y),y),\cdots,y),y \right) \mod(t^2)\\
&=m[x,y, \cdots ,y]+t\sum_{i+j=p-1}m\left[m_1(m[x,\overset{j}{\overbrace{y,\cdots,y}}],y),\overset{i}{\overbrace{y,\cdots,y}}   \right] \mod(t^2)\\
&=m[x,y, \cdots ,y]+t\sum_{i+j=p-1}(-1)^iy^im_1(m[x,\overset{j}{\overbrace{y,\cdots,y}}],y) \mod(t^2).\\
\end{align*}
We finally obtain the equation
\begin{equation}
m\left(x,\w(y) \right)+t\left( m(x,\w_1(y))+m_1(x,\w(y))\right) =m[x,y, \cdots ,y]+t\sum_{i+j=p-1}(-1)^iy^im_1(m[x,\overset{j}{\overbrace{y,\cdots,y}}],y)
\end{equation}
By collecting the coefficients of $t^0$ and $t$, we recover the usual identity $m(x,\w(y))=m[x,y,\cdots,y]$ and obtain the new identity
\begin{equation} \left( m(x,\w_1(y))+m_1(x,\w(y))\right)=\sum_{i+j=p-1}(-1)^iy^im_1([x,\overset{j}{\overbrace{y,\cdots,y}}],y),  
\end{equation}
which is equivalent to $\ind^2(m_1,\w_1)=0$. We conclude that $(m_1,\w_1)$ is a 2-cocycle of the restricted cohomology.
\end{proof}

\begin{rmq}
    By forgetting the Lie-Rinehart structure and constructing a deformation $\left(L[[t]],m_t,\w_t \right)$ of the restricted Lie algebra $\left(L[[t]],m,\w \right)$, we recover the same result for restricted Lie algebras, initially proven in \cite{EF08}.
\end{rmq}
\subsection{Equivalence of restricted formal deformations}

Let $\phi:L[[t]]\longrightarrow L[[t]]$ be a formal automorphism defined on $L$ by 
$$ \phi(x)=\sum_{i\geq 0}t^i\phi_i(x),~\phi_i:L\longrightarrow L~\F\text{-linear },~\phi_0=\text{id},  $$ and then extended by $\F[[t]]$-linearity.

\begin{defi}
	Let $\left(m_t, \w_t \right)$ and $\left(m'_t, \w'_t \right)$ be two formal deformations of $\left(m,\w\right)$. They are said to be \textbf{equivalent} (by $\phi$) if for all $x,y\in L$, 
		\begin{equation}
		m_t(\phi(x),\phi(y))=\phi\left(m'_t(x,y) \right)
		\end{equation}
		and
		\begin{equation}	
		\phi\left(\w_t(x) \right)=\w_t'\left(\phi(x)\right). 
\end{equation}
\end{defi}

\begin{lem}\label{equivalem}
	Let $\left(m_t, \w_t \right)$ and $\left(m_t', \w'_t \right)$ be two equivalent formal deformations of $\left(m,\w \right)$. Then it exists $\psi: L\longrightarrow L$ such that, for all $ x,y\in L$,
	\begin{equation}
		 m_1(x,y)-m_1'(x,y)=\psi\left(m(x,y)\right)-m\left(x,\psi(y)\right)-m\left(\psi(x),y \right)
	\end{equation}
	and
	\begin{equation}
		 \w_1(x)-\w_1'(x)=m[\psi(x), \overset{p-1}{\overbrace{x,\cdots,x}}]-\psi(\w(x)).
		 \end{equation}  
 If the equivalence is given by $\phi=\displaystyle \sum_{i\geq 0} t^i \phi_i$, then $\psi=\phi_1$.
\end{lem}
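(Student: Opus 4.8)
The plan is to set $\psi=\phi_1$ and to extract the coefficient of $t^1$ from each of the two equivalence equations, all computations being performed modulo $t^2$. Write $\phi=\id+t\phi_1+O(t^2)$, together with $m_t=m+tm_1+O(t^2)$, $m_t'=m+tm_1'+O(t^2)$, $\w_t=\w+t\w_1+O(t^2)$ and $\w_t'=\w+t\w_1'+O(t^2)$.

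\textbf{Bracket equation.} First I would expand $m_t(\phi(x),\phi(y))=\phi(m_t'(x,y))$ modulo $t^2$. Using the $\F[[t]]$-linearity of $\phi$ and the bilinearity of each $m_i$, the left-hand side becomes $m(x,y)+t\big(m(\phi_1(x),y)+m(x,\phi_1(y))+m_1(x,y)\big)$ while the right-hand side becomes $m(x,y)+t\big(m_1'(x,y)+\phi_1(m(x,y))\big)$. Comparing the coefficients of $t$ and substituting $\psi=\phi_1$ yields the first claimed identity at once. This step is purely formal bookkeeping and uses no special property of the $p$-map.

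\textbf{$p$-map equation.} The substantive part is $\phi(\w_t(x))=\w_t'(\phi(x))$. Its left-hand side is harmless: by linearity it equals $\w(x)+t\big(\phi_1(\w(x))+\w_1(x)\big)$ modulo $t^2$. For the right-hand side I must expand $\w_t'(\phi(x))=\w_t'\big(x+t\phi_1(x)\big)$, and here $\w_t'$ is only $p$-homogeneous, not additive, so I invoke the addition formula for the deformed $p$-map (the multiderivation analogue of Equation (\ref{si}), in the $(\maltese)$-summation form used in the proof of the preceding $(*)$-property lemma) applied to the two arguments $x$ and $t\phi_1(x)$. Two reductions make this tractable. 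First, $\w_t'(t\phi_1(x))=t^p\,\w_t'(\phi_1(x))$ by $p$-homogeneity, and since $p>2$ this is $O(t^2)$ and drops out. Second, every correction term in the addition formula is an iterated $m_t'$-bracket of $p$ factors drawn from $\{x,t\phi_1(x)\}$ in which $t\phi_1(x)$ occurs at least once; by multilinearity each occurrence carries a factor $t$, so modulo $t^2$ only the single summand with exactly one $t\phi_1(x)$ survives, and there $m_t'$ may be replaced by $m$. Collecting the coefficient of $t$ then gives $\w_1(x)-\w_1'(x)=B(x)-\phi_1(\w(x))$, where $B(x)$ is that lone surviving bracket, carrying the weight $1/\sharp\{x\}=1/(p-1)$ read in $\F$.

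\textbf{Main obstacle.} The only nonformal point is to identify $B(x)$ with $m[\psi(x),\overset{p-1}{\overbrace{x,\cdots,x}}]$. The surviving term emerges with $\phi_1(x)$ not in the leading slot and weighted by $1/(p-1)\equiv-1$ in characteristic $p$; moving $\phi_1(x)$ to the front costs a further sign by skew-symmetry of $m$, and the two signs cancel to give exactly $m[\phi_1(x),x,\cdots,x]$. The case $p=3$ is a reliable sanity check: there $B(x)=2\,m(m(x,\phi_1(x)),x)$, and since $m(x,\phi_1(x))=-m(\phi_1(x),x)$ and $-2\equiv 1\pmod 3$, this equals $m(m(\phi_1(x),x),x)=m[\phi_1(x),x,x]$. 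Pinning down this combinatorial coefficient for general $p$ is where essentially all the content lies; everything else is formal extraction of $t$-coefficients.
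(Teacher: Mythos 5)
Your proposal is correct and follows essentially the same route as the paper's proof: extract the $t$-coefficient of both equivalence equations, kill $\w_t'(t\phi_1(x))$ by $p$-homogeneity, keep the unique surviving $(\maltese)$-term with one factor $t\phi_1(x)$ and weight $1/(p-1)\equiv -1$, and cancel that sign against the skew-symmetry sign from moving $\phi_1(x)$ to the leading slot. The $p=3$ sanity check is a nice addition but the argument is the same.
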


\begin{proof}
	$$m_t(\phi(x),\phi(y))=\phi\left(m_t'(x,y) \right)\Longleftrightarrow \sum_{k\geq 0}t^km_t(\phi(x),\phi(y))=\sum_{k\geq 0}t^k\phi\left(m_t'(x,y) \right).     $$
	We deduce that
	\begin{align*}
	&~~~~~m(\phi(x),\phi(y))+tm_1(\phi(x),\phi(y))
	=\phi(m(x,y))+t\phi(m_1'(x,y))\mod(t^2)\\
	&\Rightarrow m\left(\sum_{i\geq 0}t^i\phi_i(y),\sum_{j\geq 0}t^j\phi_j(y) \right)+tm_1\left(\sum_{i\geq 0}t^i\phi_i(y),\sum_{j\geq 0}t^j\phi_j(y) \right)\\~~&=\sum_{i\geq 0}t^i\phi_i(m(x,y))+t\sum_{j\geq 0}t^j\phi_j(m_1'(x,y))\mod(t^2)\\
	&\Rightarrow m(x,y)+t\left( m(x,\phi_1(y))+m(\phi_1(x),y)+m_1(x,y) \right)= m(x,y)+t\left(\phi_1(m(x,y))+m_1'(x,y) \right)\mod(t^2). 
\end{align*}
	By collecting the coefficients of $t$, we obtain the first identity. Then,
	\begin{align*}
	\phi\left(\w_t(x) \right)=\w\left(\phi(x)\right)
	&\Rightarrow \phi\left(\sum_{i>0}t^i\w_i(x) \right)=\w_t'\left(x+t\phi_1(x) \right)\mod (t^2)\\  
	&\Rightarrow \w(x)+t\left(\w_1(x)+\phi_1(\w(x)) \right)=\w_t'\left(x+t\phi_1(x) \right) \mod(t^2).
	\end{align*}
	
Now we compute the right hand side  of the above equation. We denote once again by $(\maltese)$ the conditions $( x_i\in\{x,y\},~x_1=x,~x_2=y )$, where we temporarily denote $y=t\phi_1(x)$. We then have
\begin{align*}
\w_t'\left(x+t\phi_1(x) \right)&=\w_t'(x)+\w_t'(y)+\sum_{(\maltese)}\frac{1}{\sharp\{x\}}m_t[x_1,\cdots,x_p]\\
&=\w_t'(x)+\w_t'(y)+\sum_{(\maltese)}\frac{1}{\sharp\{x\}}m[x_1,\cdots,x_p]\mod(t^2)\\
&=\w_t'(x)+\w_t'(t\phi_1(x))+\frac{1}{p-1}m[x,t\phi_1(x),x,x,\cdots,x]\mod(t^2)\\
&=\w_t'(x)-tm[x,\phi_1(x),x,x,\cdots,x]\mod(t^2)\\
&=\w(x)+t\left(\w_1'(x)-m[x,\phi_1(x),x,x,\cdots,x] \right) \mod(t^2).
\end{align*}
Using this result, we obtain
$$\w(x)+t\left(\w_1(x)+\phi_1(\w(x)) \right)=\w(x)+t\left(\w_1'(x)-m[x,\phi_1(x),x,x,\cdots,x] \right) \mod(t^2).       $$
Collecting the coefficients of $t$ in the previous equation, we obtain
$$ \w_1(x)-\w_1'(x)=m[\phi_1(x),\overset{p-1}{\overbrace{x,\cdots,x}}]-\phi_1(\w(x)),$$ which is the identity we wanted.
\end{proof}

\begin{rmq}
	Lemma \ref{equivalem} justifies the definitions given in \cite{EF08} and \cite{ET00} in the case of infinitesimal deformations of restricted Lie algebras.
\end{rmq}

\begin{thm}
		Let $\left(m_t, \w_t \right)$ and $\left(m_t', \w'_t \right)$ be two formal deformations of $\left(m,\w\right)$. Then, their infinitesimal elements are in the same cohomology class.
\end{thm}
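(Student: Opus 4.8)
The plan is to show that the two infinitesimals differ by a restricted $2$-coboundary, so that they define the same class in $H^2_*(L,L)$. I read the statement as concerning two \emph{equivalent} formal deformations (equivalence being the natural hypothesis under which the infinitesimals are cohomologous), say equivalent via a formal automorphism $\phi=\sum_{i\geq 0}t^i\phi_i$ with $\phi_0=\id$. Before anything else I would record that both pairs $(m_1,\w_1)$ and $(m_1',\w_1')$ lie in $C^2_*(L,L)$ and are restricted $2$-cocycles: this is precisely the content of the earlier Lemma (each $\w_i$ satisfies the $(*)$-property with respect to $m_i$) together with Theorem \ref{restdefo}. Consequently the equality $[(m_1,\w_1)]=[(m_1',\w_1')]$ is well posed, and it suffices to produce a $1$-cochain $\psi\in C^1_*(L,L)=\Hom_{\F}(L,L)$ with $(m_1-m_1',\,\w_1-\w_1')=d^1_*(\psi)$.

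The key step is to feed the equivalence into Lemma \ref{equivalem}, which hands me exactly such a candidate, $\psi=\phi_1$, satisfying
\begin{align*}
m_1(x,y)-m_1'(x,y)&=\psi(m(x,y))-m(x,\psi(y))-m(\psi(x),y),\\
\w_1(x)-\w_1'(x)&=m[\psi(x),\overset{p-1}{\overbrace{x,\cdots,x}}]-\psi(\w(x)).
\end{align*}
It then remains to recognize each right-hand side as a component of $d^1_*(\psi)=\bigl(d^1_{CE}\psi,\ind^1(\psi)\bigr)$. For the first line this is immediate from the ordinary Chevalley--Eilenberg theory: inserting the adjoint action $x\cdot v=m(x,v)$ into the formula for $d^1_{CE}$ reproduces $\psi(m(x,y))-m(x,\psi(y))-m(\psi(x),y)$, so the bracket component of the difference is $d^1_{CE}(\psi)$; in particular $m_1$ and $m_1'$ are already cohomologous as ordinary $2$-cocycles. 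For the second line I would unwind the iterated bracket using $m(v,x)=-\ad_x(v)$, obtaining $m[\psi(x),x,\cdots,x]=(-1)^{p-1}\ad_x^{p-1}(\psi(x))=\ad_x^{p-1}(\psi(x))$ since $p$ is odd, which is exactly the term $x^{p-1}\psi(x)$ occurring in $\ind^1(\psi)(x)=\psi(x^{[p]})-x^{p-1}\psi(x)$. Matching these identifies the $p$-map component of the difference with $\ind^1(\psi)$, exhibiting $(m_1-m_1',\w_1-\w_1')$ as the restricted coboundary $d^1_*(\psi)$.

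The main obstacle I anticipate is the sign and normalization bookkeeping in this last identification. One must keep track of the parity of $p$, the characteristic-$p$ identity $\tfrac{1}{p-1}=-1$ in $\F$ (already invoked in the proof of Lemma \ref{equivalem}), and the ordering of the arguments in $m[\psi(x),x,\cdots,x]$ against the iterated action $x^{p-1}\psi(x)$, so that the second identity of Lemma \ref{equivalem} lines up with $\ind^1(\psi)$ consistently with the sign conventions fixed for the differential $d^1_*$. Once the difference has been written as $d^1_*(\psi)$, the conclusion is immediate: $(m_1,\w_1)-(m_1',\w_1')\in B^2_*(L,L)$, and hence the two infinitesimal elements represent the same class in $H^2_*(L,L)$.
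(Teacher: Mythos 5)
Your proposal is correct and follows essentially the same route as the paper: both invoke Lemma \ref{equivalem} to produce $\psi=\phi_1$, then identify the two difference formulas with the components $d^1_{CE}(\phi_1)$ and $\ind^1(\phi_1)$ of $d^1_*(\phi_1)$ (the paper lands on $(m_1-m_1',\w_1-\w_1')=-d^1_*(\phi_1)$, i.e.\ the opposite overall sign from yours, but either way the difference lies in $B^2_*(L,L)$). Your preliminary remarks — that the statement tacitly assumes the deformations are equivalent, and that the infinitesimals are restricted $2$-cocycles so the classes are well defined — match the paper's implicit setup.
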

\begin{proof}
		Let $\left(m_t, \w_t \right)$ and $\left(m_t', \w'_t \right)$ be two equivalent formal deformations via $\phi=\displaystyle\sum_{i\geq 0}t^i\phi_i$. By Lemma \ref{equivalem}, we have
	\begin{equation}\label{CEeq}
	 m_1(x,y)-m_1'(x,y)=\phi_1\left(m(x,y)\right)-m\left(x,\phi_1(y)\right)-m\left(\phi_1(x),y \right)\text{ and }
	 \end{equation}
	 \begin{equation}\label{pmapeq}
	\w_1(x)-\w_1'=m[\phi_1(x), \overset{p-1}{\overbrace{x,\cdots,x}}]-\psi(\w(x)).     
	\end{equation}

We can easily check that $\phi_1$ belongs to $C^1_{CE}(L,L)$. We are then able to compute $d^1_*(\phi_1)=\left( d^1_{CE}(\phi_1),\ind^1(\phi_1)\right) $.
\begin{align*}
\left(d^1_{CE}(\phi_1) \right)(x,y)&=x\cdot\phi_1(y)-y\cdot\phi_1(x)-\phi_1(m(x,y))\\ 
&=m(x,\phi_1(y))-m(y,\phi_1(x))-\phi_1(m(x,y))\\
&=m(x,\phi_1(y))+m(\phi_1(x),y)-\phi_1(m(x,y))\\
&=-\left(\phi_1(m(x,y))-m(x,\phi_1(y))-m(\phi_1(x),y) \right).
\end{align*}
Using Equation (\ref{CEeq}), we deduce that $m_1(x,y)-m_1'(x,y)=-d^1\phi_1(x,y)$, so $\left( m_1-m_1'\right) \in B^2_{CE}(L,L)$. 
\begin{align*}
\ind^1(\phi_1)&=\phi_1\left(\w(x)\right)-m[\phi_1(x),x,x,\cdots,x]\\
&=-\left(\w_1(x)-\w_1'(x) \right).  
\end{align*}	
Finally, $\left(m_1-m_1',~\w_1-\w_1'\right)=-d_*^1\phi_1\in B^2_*(L,L). $ 
We conclude that $(m_1,\w_1)$ and	$(m_1',\w_1')$ differ from a coboundary, so they are in the same cohomology class.
	
\end{proof}

\begin{defi}
    Let $(m_t,\w_t)$ be a restricted deformations of $(m,\w)$.
    The deformation is said to be \textbf{trivial} if there is a formal automorphism $\phi$ such that
       \begin{equation}\label{eqtriv1}
        \phi\left(m(x,y)\right)=m_t(\phi(x),\phi(y))
    \end{equation}
    and
    \begin{equation}\label{eqtriv2}
        \phi\left(\w_t(x)\right)=\w\left(\phi(x)\right).
    \end{equation}
\end{defi}
By expanding the relation (\ref{eqtriv1}) mod $(t^2)$, we obtain that $m_1$ is a Chevalley-Eilenberg coboundary. More precisely, we have $m_1=-d_{CE}^1(\phi_1)$. Now, we focus on the right-hand side of Equation (\ref{eqtriv2}). The following computations are made mod $t^2$, for all  $x\in L$:
\begin{align*}
    \w\left(\phi(x)\right)&= \w\left( \sum_it^i\phi_i(x) \right)\\
                    &=\w\left(x+t\phi_1(x)\right)\\
                    &=\w(x)+\w\left(t\phi_1(x)\right)+\displaystyle\sum_{\underset{x_{p-1}=t\phi_1(x),~x_p=x}{x_i\in\{x,~t\phi_1(x)\}}}\frac{1}{\sharp\{x\}} m(x_1,m(\cdots,m(x_{p-1},x_p)\cdots).   \\
                    &=\w(x)+\w\left(t\phi_1(x)\right)+\frac{1}{p-1}m(x,m(x,\cdots m(t\phi_1(x),x)\cdots)\\
                    &=\w(x)+\w\left(t\phi_1(x)\right)+t\ad_x^{p-1}\circ\phi_1(x).
\end{align*}
For the left-hand side, we have (mod $t^2$):
\begin{align*}
    \phi\left(\w_t(x)\right)&=\left(\sum_it^i\phi_i\left(\w_t(x)\right)\right)
    =\sum_i\sum_jt^{i+j}\phi_i\left(\omega_j(x)\right)
    =\w(x)+t\left( \omega_1(x)+\phi_1\left(\w(x)\right) \right).
    \end{align*}
We deduce that 
\begin{equation}
    \omega_1(x)+\phi_1\left(\w(x)\right)=\ad_x^{p-1}\circ\phi_1(x),
\end{equation}
which can be rewritten
\begin{equation}
    \omega_1(x)=-\phi_1\left(\w(x)\right)+\ad_x^{p-1}\circ\phi_1(x)=-\ind^1(\phi_1)(x).
\end{equation}
Thus, we  have proven the following proposition.

\begin{prop}
    The deformation $\left( m+tm_1, \w+t\w_1 \right)$ is trivial  if and only if  $(m_1,\w_1)$  is a restricted coboundary.
\end{prop}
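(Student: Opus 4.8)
The plan is to prove both implications by writing the two triviality relations (\ref{eqtriv1}) and (\ref{eqtriv2}) as identities of formal power series in $t$ and comparing the coefficients of $t$, i.e.\ working modulo $t^2$. The structural fact I will exploit throughout is that the restricted differential splits as $d^1_*(\varphi)=\left(d^1_{CE}\varphi,\ind^1(\varphi)\right)$. Hence $(m_1,\w_1)$ being a restricted coboundary means exactly that there is some $\phi_1\in C^1_*(L,L)$ with $m_1=d^1_{CE}(\phi_1)$ and $\w_1=\ind^1(\phi_1)$, and the bracket part and the $p$-map part of the statement can be treated by the two components of this pair separately.

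For the implication ``trivial $\Rightarrow$ coboundary'', I would take a formal automorphism $\phi=\sum_{i\geq 0}t^i\phi_i$ with $\phi_0=\id$ witnessing triviality and expand the two defining relations modulo $t^2$. Relation (\ref{eqtriv1}) is bilinear in its arguments, so its order-$t$ part gives immediately $m_1=-d^1_{CE}(\phi_1)$, exactly as in the ordinary (non-restricted) deformation theory. Relation (\ref{eqtriv2}) is the delicate one: expanding $\w(\phi(x))=\w\!\left(x+t\phi_1(x)+\cdots\right)$ forces the use of the additivity rule for the $p$-map (the third axiom of Definition \ref{restdefi}, in the form used to extend $\w_t$), and I expect that modulo $t^2$ the only surviving first-order contribution is the term $\ad_x^{p-1}\circ\phi_1(x)$. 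Matching the coefficient of $t$ then yields $\w_1=-\ind^1(\phi_1)$. Combining the two components gives $(m_1,\w_1)=-d^1_*(\phi_1)\in B^2_*(L,L)$, so $(m_1,\w_1)$ is a restricted coboundary; the sign is immaterial since both $d^1_{CE}$ and $\ind^1$ are linear in $\phi_1$.

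For the converse I would simply run the same expansion backwards. Given $(m_1,\w_1)=d^1_*(\psi)$ for some $\psi\in C^1_*(L,L)$, set $\phi:=\id-t\psi$, so that $\phi_1=-\psi$ and therefore $-d^1_{CE}(\phi_1)=d^1_{CE}(\psi)=m_1$ and $-\ind^1(\phi_1)=\ind^1(\psi)=\w_1$. Because each step of the forward computation is an equivalence at the level of the coefficient of $t$, substituting these identities back into the expanded forms of (\ref{eqtriv1}) and (\ref{eqtriv2}) reproduces both relations to first order, exhibiting $\left(m+tm_1,\w+t\w_1\right)$ as trivial via $\phi$.

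The main obstacle will be the nonlinearity of the $p$-map in relation (\ref{eqtriv2}): unlike the bracket, $\w$ is not additive, so $\w(\phi(x))$ cannot be expanded term by term. One must invoke the additivity formula of the $p$-map and carefully isolate the single order-$t$ term, taking care that its combinatorial coefficient $\tfrac{1}{p-1}$ collapses (in characteristic $p$ one has $\tfrac{1}{p-1}\equiv -1$) precisely so that the surviving contribution combines with $\phi_1(\w(x))$ to form $\ind^1(\phi_1)$. Once this term is correctly extracted, all the remaining Chevalley--Eilenberg bookkeeping for the bracket component is routine.
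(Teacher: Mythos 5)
Your proposal is correct and follows essentially the same route as the paper: expand the two triviality relations modulo $t^2$, read off $m_1=-d^1_{CE}(\phi_1)$ from the bracket relation, and use the additivity axiom of the $p$-map to isolate the single first-order term $t\,\ad_x^{p-1}\circ\phi_1(x)$ (with the coefficient $\tfrac{1}{p-1}\equiv -1$ collapsing exactly as you predict), yielding $\w_1=-\ind^1(\phi_1)$ and hence $(m_1,\w_1)=-d^1_*(\phi_1)\in B^2_*(L,L)$. The converse via $\phi=\id-t\psi$ is the same reversal the paper leaves implicit, and it is valid at the same (first-order) level of rigor as the paper's own argument.
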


\begin{rmq}
    By forgetting the Lie-Rinehart structure and constructing a deformation $\left(L[[t]],m_t,\w_t \right)$ of the restricted Lie algebra $\left(L[[t]],m,\w \right)$, we recover the same results for restricted Lie algebras, initially proven in \cite{EF08}. Moreover, we introduced the notion of restricted trivial deformation.
\end{rmq}
\subsection{Obstructions}
Let $\left(A,L, [\cdot,\cdot],(-)^{[p],\rho} \right)$ be a restricted Lie-Rinehart algebra, $(m,\w)$ be the associated restricted multiderivation and $n\geq 1$. A deformation is of order $n$ if it is given by
$$m^n_t=\sum_{i=0}^{n}t^im_i;~~~\w_t^n=\sum_{i=0}^{n}t^i\w_i.$$

\begin{defi}
	Let $\left( m_t^n,\w_t^n \right) $ be an $n$-order deformation of $\left(m,\w\right)$. We set for all  $x,y,z\in L,$ 	
	\begin{align*}
	\obs^{(1)}_{n+1}(x,y,z)&=\sum_{i=1}^{n}\left(m_i(x,m_{n+1-i}(y,z))+m_i(y,m_{n+1-i}(z,x))+m_i(z,m_{n+1-i}(x,y)) \right);\\
	\obs^{(2)}_{n+1}(x,y)~~&=\sum_{\underset{i_1+\cdots+i_p=n+1}{0\leq i_k\leq n}}m_{i_p}\left(m_{i_{p-1}}(\cdots(m_{i_1}(x,y),y),\cdots,y),y \right)-\sum_{i=1}^{n}m_i(x,\w_{n+1-i}(y)).\\
	\end{align*}

\end{defi}

\begin{prop}
		Let $\left( m_t^n,\w_t^n \right) $ be a $n$-order deformation of $\left(m,\w\right)$. Let $\left(m_{n+1},~\w_{n+1} \right)\in C^2_*(L,L)$. Then $\left( m^n_t+t^{n+1}m_{n+1},~  \w^n_t+t^{n+1}\w_{n+1} \right) $ is a $(n+1)$-order deformation of $(m,\w)$ if and only if  
	
	$$	\left( \obs^{(1)}_{n+1}, \obs^{(2)}_{n+1}\right)=d_*^2\left(m_{n+1},~\w_{n+1} \right).        $$
\end{prop}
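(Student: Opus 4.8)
The plan is to expand the two deformation conditions (\ref{jacomulti}) and (\ref{pmapmulti}) for the candidate $(n+1)$-order deformation and isolate the coefficient of $t^{n+1}$ in each. Since $(m_t^n,\w_t^n)$ is already an $n$-order deformation, all coefficients of $t^k$ for $k\leq n$ vanish automatically by hypothesis; the obstruction to extending is precisely the $t^{n+1}$-coefficient. The key observation is that when we add the top term $t^{n+1}m_{n+1}$ and $t^{n+1}\w_{n+1}$, the $t^{n+1}$-coefficient splits into two kinds of contributions: those involving only the lower terms $m_i,\w_i$ with $i\leq n$, and those that are linear in the new top terms $m_{n+1},\w_{n+1}$ (products of two new terms live in degree $\geq 2(n+1)>n+1$, hence do not contribute).

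First I would treat the Jacobi-type condition (\ref{jacomulti}). Collecting the coefficient of $t^{n+1}$, the pure lower-order part is by definition $\obs^{(1)}_{n+1}(x,y,z)$, while the terms linear in $m_{n+1}$ reproduce exactly the Chevalley-Eilenberg differential $d^2_{CE}(m_{n+1})$ evaluated on $(x,y,z)$ — this is the standard Gerstenhaber computation for ordinary Lie (Lie-Rinehart) deformations, and I would invoke the ordinary theory here rather than recompute. Thus the $t^{n+1}$-coefficient of (\ref{jacomulti}) vanishes if and only if $d^2_{CE}(m_{n+1})=\obs^{(1)}_{n+1}$ (up to the sign convention fixed by the definition of $d^2_*$). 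Second I would treat the $p$-map condition (\ref{pmapmulti}). Here one side is a single evaluation $m_t(x,\w_t(y))$ and the other is the $p$-fold iterated bracket $m_t[x,y,\dots,y]$. Collecting the $t^{n+1}$-coefficient, the pure lower-order contributions assemble into $\obs^{(2)}_{n+1}(x,y)$, and the terms linear in the new data $(m_{n+1},\w_{n+1})$ are precisely those appearing in the definition of $\ind^2(m_{n+1},\w_{n+1})$, namely the pieces $m_{n+1}(x,\w(y))$, $m\bigl(x,\w_{n+1}(y)\bigr)$, and the alternating sum $\sum_{i+j=p-1}(-1)^i y^i m_{n+1}([x,\overset{j}{\overbrace{y,\dots,y}}],y)$. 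This is the same bookkeeping already carried out in the proof of Theorem \ref{restdefo} for the case $n=0$, now done at order $n+1$.

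Having established both coefficient identities, the conclusion is immediate: the candidate is an $(n+1)$-order deformation exactly when both $t^{n+1}$-coefficients vanish, i.e. when $d^2_{CE}(m_{n+1})=\obs^{(1)}_{n+1}$ and $\ind^2(m_{n+1},\w_{n+1})=\obs^{(2)}_{n+1}$. Since $d^2_*(m_{n+1},\w_{n+1})=\bigl(d^2_{CE}(m_{n+1}),\ind^2(m_{n+1},\w_{n+1})\bigr)$ by definition of the restricted differential, this is precisely the single equation $\bigl(\obs^{(1)}_{n+1},\obs^{(2)}_{n+1}\bigr)=d^2_*(m_{n+1},\w_{n+1})$. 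I should also note that Conditions (\ref{condenplus1}) and (\ref{condenplus2}) on the symbol maps are handled separately and do not affect the cohomological obstruction, since they constrain only $\sigma_t$; I would remark that these are compatible with the construction (or absorb them into the definition of what counts as a deformation) rather than re-deriving them.

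The main obstacle I anticipate is the careful isolation of the $t^{n+1}$-coefficient in the iterated $p$-fold product $m_t[x,y,\dots,y]$, where the multi-index sum $\sum_{i_1+\cdots+i_p=n+1}$ must be split into the part with exactly one index equal to some value $\geq 1$ contributing a top term (giving the $\ind^2$ piece) versus the part with all indices $\leq n$ (giving $\obs^{(2)}_{n+1}$). One must verify that the terms linear in $m_{n+1}$ reorganize — using $y\cdot x=-m(x,y)$ as noted in the remark on the adjoint action — into exactly the alternating sum defining $\ind^2$, matching the sign conventions. This is a direct extension of the degree-one calculation in Theorem \ref{restdefo}, so the work is combinatorial rather than conceptual, but it is where sign and indexing errors are most likely to intrude.
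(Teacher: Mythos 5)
Your proposal is correct and follows essentially the same route as the paper: expand the Jacobi and $p$-map conditions, collect the coefficient of $t^{n+1}$, separate the terms linear in $(m_{n+1},\w_{n+1})$ (which assemble into $d^2_{CE}m_{n+1}$ and $\ind^2(m_{n+1},\w_{n+1})$, the latter via the multi-index split with exactly one index equal to $n+1$ and the adjoint-action sign convention) from the purely lower-order terms (which give $\obs^{(1)}_{n+1}$ and $\obs^{(2)}_{n+1}$), and conclude from the definition of $d^2_*$. Your additional remark on the symbol-map conditions (\ref{condenplus1})--(\ref{condenplus2}) is a point the paper's proof passes over silently, so no gap there either.
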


\begin{proof}
	Let $\left( m^n_t+t^{n+1}m_{n+1},~  \w^n_t+t^{n+1}\w_{n+1} \right)$ be a $(n+1)$-order deformation of $L$. The deformed element $m_t^n+t^{n+1}m_{n+1}$ satisfies the Jacobi identity, which can be written
	
	$$\sum_{q=0}^{n+1}t^{q}\sum_{i=0}^{q}\left(m_i(x,m_{q-i}(y,z))+m_i(y,m_{q-i}(z,x))+m_i(z,m_{q-i}(x,y)) \right)=0.       $$ By collecting the coefficients of $t^{n+1}$, we obtain
	
	$$ \sum_{i=0}^{n+1}\left(m_i(x,m_{n+1-i}(y,z))+m_i(y,m_{n+1-i}(z,x))+m_i(z,m_{n+1-i}(x,y)) \right)=0.    $$

By isolating the first and last terms of the sum, we have
\begin{align*}
&m(x,m_{n+1}(y,z))+m(y,m_{n+1}(z,x))+m(z,m_{n+1}(x,y)\\ +&m_{n+1}(x,m(y,z))+m_{n+1}(y,m(z,x))+m_{n+1}(z,m(x,y))\\+& \sum_{i=1}^{n}\left(m_i(x,m_{n+1-i}(y,z))+m_i(y,m_{n+1-i}(z,x))+m_i(z,m_{n+1-i}(x,y)) \right)=0. 
\end{align*}  
We recall the expression of the differential of Chevalley-Eilenberg of order two:
$$d^2_{CE}\varphi(x,y,z)=\varphi([y,z],x)-\varphi([x,z],y)+\varphi([x,y],z)-[x,\varphi(y,z)]+[y,\varphi(x,z)]-[z,\varphi(x,y)]. $$

Using the two last equations with $\varphi=m_{n+1}$, recalling that $m=[\cdot,\cdot]$ and by skew-symmetry, we finally obtain
$$\sum_{i=1}^{n}\left(m_i(x,m_{n+1-i}(y,z))+m_i(y,m_{n+1-i}(z,x))+m_i(z,m_{n+1-i}(x,y)) \right)=d^2_{CE}m_{n+1}(x,y,z),$$ which exactly means that $d^2_{CE}m_{n+1}(x,y,z)=\obs^{(1)}_{n+1}(x,y,z)$. 

\vspace{0.5cm}

Denote $\w_t^n+t^{n+1}\w_{n+1}:=\w_t^{n+1}$. If $\w_t^{n+1}$ is a deformation of order $(n+1)$, then it satisfies the equation
\begin{equation}
m_t^{n+1}\left(x,\w_t^{n+1}(y) \right)=m_t^{n+1}( m_t^{n+1}(\cdots m_t^{n+1}(x,  \overset{p\text{ terms}}{\overbrace{y),y),\cdots,y}}).
\end{equation}
By expanding and collecting the coefficients of $t^{n+1}$, we obtain
$$\sum_{i=0}^{n+1}m_i(x,\w_{n+1-i}(y))=\sum_{\underset{i_1+\cdots+i_p=n+1}{0\leq i_k\leq n+1}}m_{i_p}\left(m_{i(p-1)}(\cdots(m_{i_1}(x,y),y),\cdots,y),y \right),    $$
which can be rewritten
\begin{align*}
&m(x,\w_{n+1}(y))+m_{n+1}(x,\w(y))-\sum_{i+j=p-1}m\left[ m_{n+1}(m[x,\overset{j}{\overbrace{y,\cdots,y}}],y),\overset{i}{\overbrace{y,\cdots,y}}\right]\\
=&\sum_{\underset{i_1+\cdots+i_p=n+1}{0\leq i_k\leq n}}m_{i_p}\left(m_{i(p-1)}(\cdots(m_{i_1}(x,y),y),\cdots,y),y \right)-\sum_{i=1}^{n}m_i(x,\w_{n+1-i}(y)).
\end{align*}
Recalling the definition of $\ind^2(m_{n+1},\w_{n+1})$, we finally obtain

$$d^2_*\left(m_{n+1},\w_{n+1} \right) =\left(d^2_{CE}m_{n+1},\ind^2(m_{n+1},\w_{n+1})\right)= \left( \obs^{(1)}_{n+1}, \obs^{(2)}_{n+1}\right).     $$
Conversely, if the latter equation is satisfied, the same computation shows that $$\left( m^n_t+t^{n+1}m_{n+1},~  \w^n_t+t^{n+1}\w_{n+1} \right)$$ is a $(n+1)$-order deformation of $(m,\w)$.
\end{proof}

We shall stop our obstruction computations here. The result which is missing is that $\left( \obs^{(1)}_{n+1}, \obs^{(2)}_{n+1}\right)$ is a $3$-cocycle of the cohomology, but so far we do not have an expression for $d^3_*$, so this result is beyond our reach for the moment.

\begin{rmq}
    By forgetting the Lie-Rinehart structure and constructing a $n$-order deformation $\left(L[[t]],m_t^n,\w_t^n \right)$ of the restricted Lie algebra $\left(L[[t]],m,\w \right)$, we have the same results for restricted Lie algebras, which is also  new to our knowledge.
\end{rmq}
\subsection{Restricted Nijenhuis operators}
In this subsection, we introduce restricted Nijenhuis operator and show their relationships with trivial deformations. \begin{defi}
Let $\left(L,[\cdot,\cdot],(\cdot)^{[p]}\right)$ be a restricted Lie algebra.
    A linear map $N: L\rightarrow L$ is called \textbf{restricted Nijenhuis operator} on $L$ if
    \begin{align}
        N\left([N(x),y]+[x,N(y)]-N([x,y])\right)&=[N(x),N(y)],\\
        N\left(N(x^{[p]})-\ad_x^{p-1}\circ N(x)\right)&=N(x)^{[p]},~~~~~~~~~\forall x,y\in L.
    \end{align}

\end{defi}

Now, we define the following  two maps on $L$: 
\begin{align}
    [x,y]_N&=[N(x),y]+[x,N(y)]-N([x,y]),\\
    x^{[p]_N}&=N(x^{[p]})-\ad_x^{p-1}\circ N(x).
\end{align}

\begin{prop}
    The pair $\left([\cdot,\cdot]_N,(\cdot)^{[p]}\right)$ is a restricted $2$-cocycle. Moreover, the restricted formal deformation given by
\begin{equation}
   [x,y]_t=[x,y]+t[x,y]_N,~~x^{[p]_t}=x^{[p]}+t x^{[p]_N}
\end{equation}
 is trivial.
\end{prop}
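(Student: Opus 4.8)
The plan is to recognize the pair $\big([\cdot,\cdot]_N,(\cdot)^{[p]_N}\big)$ as the restricted differential of the $1$-cochain $N\in C^1_*(L,L)=\Hom_\F(L,L)$. Unwinding the definitions with $\varphi=N$, the Chevalley--Eilenberg differential gives $d^1_{CE}(N)(x,y)=[N(x),y]+[x,N(y)]-N([x,y])=[x,y]_N$, and the induced map gives $\ind^1(N)(x)=N(x^{[p]})-\ad_x^{p-1}(N(x))=x^{[p]_N}$. Thus $\big([\cdot,\cdot]_N,(\cdot)^{[p]_N}\big)=d^1_*(N)=\big(d^1_{CE}(N),\ind^1(N)\big)$; in particular the Evans--Fuchs lemma guarantees that $\ind^1(N)$ has the $(*)$-property with respect to $d^1_{CE}(N)$, so the pair genuinely lies in $C^2_*(L,L)$, indeed in $B^2_*(L,L)$. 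Since $d^2_*\circ d^1_*=0$, it is a restricted $2$-cocycle, which proves the first assertion. Note that this step does not use the Nijenhuis identities: any linear $N$ yields a restricted coboundary.

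Next I would check that $(m_t,\w_t)=\big(m+t[\cdot,\cdot]_N,\ \w+t(\cdot)^{[p]_N}\big)$ is a bona fide restricted formal deformation, which is where the two defining identities of $N$ are used. Collecting powers of $t$ in the Jacobi condition (\ref{jacomulti}) leaves only three contributions: the $t^0$ term is the Jacobi identity of $m$, the $t^1$ term is $d^2_{CE}([\cdot,\cdot]_N)=d^2_{CE}\big(d^1_{CE}(N)\big)=0$, and the $t^2$ term is the Jacobiator of $[\cdot,\cdot]_N$ by itself. The latter vanishes exactly because the first Nijenhuis identity $N([x,y]_N)=[N(x),N(y)]$ expresses that the Nijenhuis torsion is zero, the standard statement that $N$ deforms $[\cdot,\cdot]$ into a second Lie bracket. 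For the $p$-map condition (\ref{pmapmulti}), the second identity reads $N(x^{[p]_N})=N(x)^{[p]}$, i.e. $N$ is a restricted morphism from $\big(L,[\cdot,\cdot]_N,(\cdot)^{[p]_N}\big)$ to $\big(L,[\cdot,\cdot],(\cdot)^{[p]}\big)$; together with the bracket statement this is precisely what makes $(\cdot)^{[p]_N}$ a $p$-map compatible with $[\cdot,\cdot]_N$, so that $m_t(x,\w_t(y))=m_t[x,y,\dots,y]$ holds.

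Triviality then follows immediately from the characterization proved just above: a deformation $(m+tm_1,\w+t\w_1)$ is trivial if and only if $(m_1,\w_1)$ is a restricted coboundary. Having identified $(m_1,\w_1)=d^1_*(N)\in B^2_*(L,L)$, we conclude that the deformation is trivial, the trivializing formal automorphism being $\phi=\id-tN$. As an explicit cross-check one may expand the triviality equations (\ref{eqtriv1}) and (\ref{eqtriv2}) for this $\phi$ modulo $t^2$, recovering $m_1=-d^1_{CE}(\phi_1)$ and $\w_1=-\ind^1(\phi_1)$ with $\phi_1=-N$, exactly as in the trivial-deformation proposition.

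The main obstacle is the $p$-map condition (\ref{pmapmulti}). Its left-hand side $m_t(x,\w_t(y))$ is a polynomial of degree $2$ in $t$, whereas the nested-bracket right-hand side $m_t[x,y,\dots,y]$ is a priori of degree $p$; one must therefore show that the coefficients of $t^3,\dots,t^p$ collapse and that the remaining ones match. This is the only point where a genuine computation beyond first order is needed, and it requires iterating the two Nijenhuis identities (not a single application) together with the $(*)$-property linking $\w_1$ and $m_1$. For this reason it is cleanest to organize the argument as above, deducing triviality from the coboundary property and thereby confining the delicate verification to the claim that $(m_t,\w_t)$ is a deformation.
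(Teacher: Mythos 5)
Your proposal follows the same route as the paper: both identify $\left([\cdot,\cdot]_N,(\cdot)^{[p]_N}\right)$ with $d^1_*(N)=\left(d^1_{CE}(N),\ind^1(N)\right)$, conclude that the pair is a restricted $2$-coboundary (hence a cocycle), and deduce triviality from the earlier proposition characterizing trivial infinitesimal deformations by restricted coboundaries. You go further than the paper's two-line proof by explicitly checking that $(m_t,\w_t)$ satisfies the deformation equations --- the only place where the two Nijenhuis identities actually enter --- a verification the paper leaves implicit.
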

\begin{proof}
    By definition, we have
    \begin{align*}
    [x,y]_N&=d^1_{CE}N(x,y),\\
    x^{[p]_N}&=\ind^1N(x).
\end{align*}
Therefore, $\left([\cdot,\cdot]_N,(\cdot)^{[p]}\right)$ is a restricted $2$-coboundary.
\end{proof}

\section{Restricted Heisenberg algebras}\label{sectionh}

The origins of quantum mechanics lie in the groundbreaking idea of Heisenberg to consider the components of the position vector $\textbf{q}=(q_1,q_2,q_3)\in \R^3$ and the momentum vector $\textbf{p}=(p_1,p_2,p_3)\in\R^3$ of a particle at a given time $t$ as operators on a Hilbert space. Those components have to satisfy the relations
$$[q_j,q_k]=0,~[p_j,p_k]=0,~[p_j,q_k]=-i\hbar\delta_{j,k},$$
$i$ being the square root of $-1$, $j,k\in \{1,2,3\}$ and $\hbar$ the Planck constant. Those relations can be extended to any vectors $\textbf{q},\textbf{p}\in \R^n$. Usually, one considers those commutation relations as defining relations of a Lie algebra of dimension $2n+1$ by adding a central element $z$ such that
$$[q_j,q_k]=[p_j,p_k]=[p_j,z]=[q_j,z]=0,~[p_j,q_k]=\delta_{j,k}z.$$ This $(2n+1)$-dimensional real Lie algebra is called the Heisenberg algebra. It can be seen as a central extension of the commutative algebra $\R^{2n}$ by a copy of $\R$ spanned by $z$. More background material can be found in \cite{WP17}.

In this section, we investigate examples based on the Heisenberg Lie algebra of dimension $3$ over fields of characteristic $p\geq 3$. We study the restricted structure of the Heisenberg algebra, then we give an explicit description of the second restricted cohomology spaces. Finally, we give an example of restricted Lie-Rinehart structure involving the restricted Heisenberg algebras and discuss its deformations.
Restricted $p$-nilpotent Heisenberg algebras have also been considered in \cite{SU16}.
\subsection{Restricted structures on the Heisenberg algebra}

Let $\mathbb{F}$ be a field of characteristic $p\geq3$. We consider the \textbf{Heisenberg algebra} $\h=\text{Span}_{\F} \{x,y,z\}$ defined with the bracket $[x,y]=z.$ This Lie algebra is nilpotent of order $2$, therefore all the $p$-folds brackets on $\h$ vanish. Let $(\cdot)^{[p]}$ be a $p$-map on $\h$. We then have $(u+v)^{[p]}=u^{[p]}+v^{[p]},$ for all $u,v\in \h$. Hence, any $p$-map on $\h$ is $p$-semilinear.
 
 \begin{rmq} 
    In \cite{EM22}, $\h$ is denoted $L^2_{3|0}$.
\end{rmq}

 \begin{prop}\label{classifheisenberg}
Any $p$-structure on $\h$ is given by $x^{[p]}=\theta(x)z,~y^{[p]}=\theta(y)z,~z^{[p]}=\theta(z)z,$
with  $\theta: \h\rightarrow \F$ a linear form on $\h$.
 \end{prop}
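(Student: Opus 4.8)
The plan is to reduce the statement to a single observation: every $p$-map on $\h$ sends each basis vector into the centre $\F z$. Since it has already been noted (just before the statement) that any $p$-map on $\h$ is $p$-semilinear, such a map is entirely determined by the three values $x^{[p]},y^{[p]},z^{[p]}$, so it suffices to locate these three elements.

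First I would exploit Condition 2 of Definition \ref{restdefi}, namely $[u,v^{[p]}]=\ad_v^{p}(u)$. Because $\h$ is nilpotent of order $2$, one has $[\h,\h]=\F z$ and $[\h,\F z]=0$, so $\ad_v^{2}=0$ for every $v\in\h$; as $p\geq 3$, this forces $\ad_v^{p}(u)=0$ for all $u,v$. Hence $[u,v^{[p]}]=0$ for every $u$, which says precisely that $v^{[p]}$ lies in the centre $Z(\h)$. A direct computation with the bracket $[x,y]=z$ shows $Z(\h)=\F z$ (an element $ax+by+cz$ is central if and only if $a=b=0$). Applying this to $v=x,y,z$ yields scalars $\alpha,\beta,\gamma\in\F$ with $x^{[p]}=\alpha z$, $y^{[p]}=\beta z$ and $z^{[p]}=\gamma z$.

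It then remains only to repackage this data: define $\theta\colon\h\to\F$ to be the linear form determined by $\theta(x)=\alpha$, $\theta(y)=\beta$, $\theta(z)=\gamma$, so that $x^{[p]}=\theta(x)z$, $y^{[p]}=\theta(y)z$ and $z^{[p]}=\theta(z)z$, as claimed. For the converse inclusion (that every such $\theta$ genuinely produces a restricted structure) I would invoke Jacobson's Theorem \ref{jacobson}: since $\ad_{e_j}^{p}=0=\ad_{\theta(e_j)z}$ for each basis element $e_j\in\{x,y,z\}$, there is exactly one $p$-map with $e_j^{[p]}=\theta(e_j)z$, and this is precisely the map described above.

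I do not expect a serious obstacle here; the argument is essentially the interplay between nilpotency (which kills $\ad_v^{p}$) and the defining axiom (which then pins $v^{[p]}$ inside the centre). The one point requiring a little care is the well-definedness of the $p$-semilinear extension from basis values, which relies on the identity $(\lambda+\mu)^{p}=\lambda^{p}+\mu^{p}$ in characteristic $p$; this guarantees that the assignment $\sum_i\lambda_i e_i\mapsto\sum_i\lambda_i^{p}\,\theta(e_i)\,z$ is additive, and hence a legitimate $p$-map matching the three prescribed basis values.
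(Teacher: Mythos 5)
Your argument is correct and follows essentially the same route as the paper: the converse direction pins $v^{[p]}$ in the centre $\F z$ via Condition 2 of Definition \ref{restdefi} together with the $2$-step nilpotency of $\h$, and the existence direction is Jacobson's Theorem \ref{jacobson} applied to $\ad_{e_j}^p=0=\ad_{\theta(e_j)z}$. Your added remarks on $p$-semilinearity and well-definedness of the extension from basis values are consistent with what the paper establishes just before the statement.
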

 
 \begin{proof}
Using Theorem \ref{jacobson}, it is enough to check that  
$$\left(\ad_{x} \right)^p-\theta(x)\ad_{z}=\left(\ad_{y} \right)^p-\theta(y)\ad_{z}=\left(\ad_{z} \right)^p-\theta(z)\ad_{z}=0$$ to obtain the first claim. The above identities are always true, because $z$ lies in the center of $\h$. Conversely, let $(\cdot)^{[p]}$ be a $p$-map on $\h$. Because of the second condition of the Definition \ref{restdefi}, the image of $(\cdot)^{[p]}$ lies in the center of $\h$, which is one-dimensional and spanned by $z$. 
Therefore, it exists $\theta:\h\rightarrow \F$ linear such that $x^{[p]}=\theta(x)z,~y^{[p]}=\theta(y)z,~z^{[p]}=\theta(z)z.$
 \end{proof}
 
\noindent\textbf{Notation.} We will denote a restricted Heisenberg algebra whose $p$-map is given by the linear form $\theta$ by $(\h, \theta)$. 

\begin{rmq}
Let $u\in(\h,\theta),~u=\alpha x+\beta y + \gamma z,~\alpha,\beta,\gamma\in\F.$ Then $u^{[p]}=\left(\alpha^p\theta(x)+\beta^p\theta(y)+\gamma^p\theta(z)\right)z.$
\end{rmq}

\begin{lem}\label{heisiso}
Let $(\h,\theta)$ and $(\h,\theta')$ be two  restricted Heisenberg algebras. Then, any Lie isomorphism $\phi:(\h,\theta)\rightarrow (\h,\theta')$ is of the form 
\begin{equation}\begin{cases}\label{hiso}
        \phi(x)&=ax+by+cz\\
        \phi(y)&=dx+ey+fz\\
        \phi(z)&=(ae-bd)z,~~ae-bd\neq0,
\end{cases}\end{equation}
with $a,b,c,d,e,f\in\F$. Moreover, $\phi$ is a restricted Lie isomorphism if and only if
 \begin{equation}\begin{cases}\label{resthiso}
        \theta(x)u&=a^p\theta'(x)+b^p\theta'(y)+c^p\theta'(z)\\
        \theta(y)u&=d^p\theta'(x)+e^p\theta'(y)+f^p\theta'(z)\\
        \theta(z)u&=u^p\theta'(z),
\end{cases}\end{equation}
    where $u:=ae-bd\neq0$.
\end{lem}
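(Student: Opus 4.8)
The plan is to handle the two assertions in turn: first determine the shape of an arbitrary Lie isomorphism, and then impose the restricted ($p$-map) compatibility on top of it. For the first part, I would write an arbitrary linear map as $\phi(x)=ax+by+cz$ and $\phi(y)=dx+ey+fz$; I do not even need to invoke separately that $z$ spans the center, because the Lie-morphism property already forces $\phi(z)=\phi([x,y])=[\phi(x),\phi(y)]$. Expanding this bracket by bilinearity, using $[x,y]=z$ together with the fact that $z$ is central (so every bracket involving $z$ dies), collapses the expression to $[\phi(x),\phi(y)]=(ae-bd)z$, which is exactly the third line of (\ref{hiso}). To see that $\phi$ is an isomorphism precisely when $ae-bd\neq0$, I would compute the determinant of $\phi$ in the basis $\{x,y,z\}$: the matrix is block lower-triangular with diagonal block $\left(\begin{smallmatrix}a&d\\b&e\end{smallmatrix}\right)$ and bottom-right entry $ae-bd$, so $\det\phi=(ae-bd)^2$. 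Hence invertibility is equivalent to $ae-bd\neq0$, and conversely any $\phi$ of the form (\ref{hiso}) with $ae-bd\neq0$ respects the bracket by construction.

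For the second part, the key simplifying device is that the restricted condition $\phi(w^{[p]})=\phi(w)^{[p]'}$ only needs to be tested on the basis $\{x,y,z\}$. This is legitimate because both $w\mapsto\phi(w^{[p]})$ and $w\mapsto\phi(w)^{[p]'}$ are $p$-semilinear: the $p$-map on $\h$ is additive (as $\h$ is $2$-step nilpotent all higher brackets vanish, so $(u+v)^{[p]}=u^{[p]}+v^{[p]}$) and satisfies $(\lambda w)^{[p]}=\lambda^p w^{[p]}$, while $\phi$ is linear, and composition preserves $p$-semilinearity. Two $p$-semilinear maps agreeing on a basis agree everywhere, so the full restricted condition reduces to its three basis instances. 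Evaluating on $x$ gives $\phi(x^{[p]})=\theta(x)\phi(z)=\theta(x)\,uz$ against $\phi(x)^{[p]'}=(ax+by+cz)^{[p]'}=\bigl(a^p\theta'(x)+b^p\theta'(y)+c^p\theta'(z)\bigr)z$, i.e. the first line of (\ref{resthiso}); the evaluations on $y$ and $z$ yield the second and third lines identically, the last using $\phi(z)^{[p]'}=(uz)^{[p]'}=u^p\theta'(z)z$.

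I expect the only genuinely delicate point to be the justification that testing on a basis suffices, that is, checking that both composite maps are $p$-semilinear rather than merely linear. If one bypasses this reduction and instead substitutes a general element $\alpha x+\beta y+\gamma z$, one must track the Frobenius expansions such as $(\alpha a+\beta d)^p=\alpha^p a^p+\beta^p d^p$ and verify that no cross terms survive; the $p$-semilinearity argument is precisely what guarantees this in advance and keeps the bookkeeping clean. Everything else follows routinely from the single relation $[x,y]=z$ and the Frobenius endomorphism of $\F$.
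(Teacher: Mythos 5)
Your proof is correct and follows essentially the same route as the paper's: write a general linear map in the basis, let the bracket condition force $\phi(z)=(ae-bd)z$ together with the invertibility condition $ae-bd\neq 0$, then evaluate the restricted condition $\phi(w^{[p]})=\phi(w)^{[p]'}$ on the basis elements to read off (\ref{resthiso}). Your only addition is the explicit justification that basis evaluation suffices (both composites are $p$-semilinear because $\h$ is $2$-step nilpotent, so the $p$-maps are additive), a point the paper leaves implicit after its earlier remark that every $p$-map on $\h$ is $p$-semilinear.
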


\begin{proof}
    A Lie isomorphism $\phi:\h\rightarrow\h$ must satisfy $\phi([v,w])=[\phi(v),\phi(w)],\text{ with }v,w\in\h,$ as well as $\det(\phi)=0$. Applying those conditions to any linear map $\phi:\h\rightarrow\h$, we obtain Conditions (\ref{hiso}). Then, $\phi$ is a restricted map on $\h$ if and only if $\phi\left(v^{[p]}\right)=\phi(v)^{[p]'}$, with $v\in \h$ and $(\cdot)^{[p]'}$ the $p$-map on $\h$ given by the linear form $\theta'$. We obtain Conditions (\ref{resthiso}) by evaluating this equation on the basis of $\h$:
    \begin{align*}
       \phi\left(x^{[p]}\right)=\phi(x)^{[p]'}&\implies\theta(x)\phi(z)=(ax+by+cz)^{[p]'}\\
       &\implies\theta(x)uz=\theta'(x)z+b^p\theta'(y)z+c^p\theta'(z)z\\
       &\implies\theta(x)u=\theta'(x)+b^p\theta'(y)z+c^p\theta'(z).
    \end{align*}
   Other two equations can be  obtained in a similar way.
\end{proof}

\begin{thm}\label{heisclass}
    There are three non-isomorphic restricted Heisenberg algebras, respectively given by the linear forms $\theta=0,$ $\theta=x^*$ and $\theta=z^*$.
\end{thm}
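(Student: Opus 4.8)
The plan is to combine Proposition \ref{classifheisenberg}, which identifies the restricted structures on $\h$ with linear forms $\theta$ (equivalently triples $(\theta(x),\theta(y),\theta(z))\in\F^3$), with the explicit isomorphism conditions \eqref{resthiso} of Lemma \ref{heisiso}, and then to classify the triples up to the resulting equivalence. First I would extract two isomorphism invariants directly from \eqref{resthiso}. The third equation reads $\theta(z)u=u^p\theta'(z)$ with $u\neq 0$, that is $\theta(z)=u^{p-1}\theta'(z)$; hence the condition $\theta(z)=0$ is preserved by every restricted isomorphism. A short computation with the first two equations shows that the condition $\theta=0$ is likewise preserved: if $\theta=0$, then $\theta'(z)=0$, and the remaining pair $a^p\theta'(x)+b^p\theta'(y)=0$, $d^p\theta'(x)+e^p\theta'(y)=0$ has coefficient determinant $(ae-bd)^p=u^p\neq 0$, forcing $\theta'=0$. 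These two invariants partition the restricted structures into the three families (i) $\theta=0$; (ii) $\theta\neq 0$ with $\theta(z)=0$; (iii) $\theta(z)\neq 0$, which are automatically pairwise non-isomorphic.

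Next I would show that each family consists of a single isomorphism class, by producing, for an arbitrary $\theta$ in the family, an explicit isomorphism onto the claimed normal form. Family (i) is trivial. For family (ii) I take the target $\theta'=x^*$, so \eqref{resthiso} reduces to $a^p=\theta(x)u$ and $d^p=\theta(y)u$, the third equation holding automatically; choosing $u=1$, taking $a,d$ to be $p$-th roots of $\theta(x),\theta(y)$, and observing that $(a,d)\neq(0,0)$ since $(\theta(x),\theta(y))\neq(0,0)$, I can then solve the single linear relation $ae-bd=u$ for $(e,b)$, yielding an isomorphism $(\h,\theta)\xrightarrow{\sim}(\h,x^*)$. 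For family (iii) I take $\theta'=z^*$, so \eqref{resthiso} reduces to $c^p=\theta(x)u$, $f^p=\theta(y)u$ and $\theta(z)=u^{p-1}$; choosing $u$ with $u^{p-1}=\theta(z)$, then $c,f$ as $p$-th roots, and $a=u,\ e=1,\ b=d=0$, gives $(\h,\theta)\xrightarrow{\sim}(\h,z^*)$.

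Assembling these, the three invariant families are non-empty with representatives $0$, $x^*$ and $z^*$, they are pairwise non-isomorphic by the invariants of the first step, and each is a single class by the second step; hence there are exactly three restricted Heisenberg algebras.

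I expect the only genuine obstacle to be the reduction in family (iii): the equation $\theta(z)=u^{p-1}$ requires extracting a $(p-1)$-th root of $\theta(z)$ in $\F$. Over a field such as $\F_p$ one has $u^{p-1}=1$ for every $u\neq 0$, so $\theta(z)$ itself would become an extra invariant and the count of three would fail. The three-class statement therefore rests on working over an algebraically closed field (or at least one in which every element is a $(p-1)$-th power), and I would make this hypothesis explicit. By contrast, the $p$-th roots used in families (ii) and (iii) are harmless, as they only require surjectivity of the Frobenius map.
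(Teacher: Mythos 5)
Your proposal is correct and is in fact more complete than the paper's own argument, though it requires a hypothesis on the field that the paper does not state. The paper's proof only establishes the \emph{non-isomorphism} part: it checks via Conditions (\ref{resthiso}) that the three listed forms $0$, $x^*$, $z^*$ are pairwise non-isomorphic (each case reducing to an equation like $a^p=d^p=0$ or $u^p=0$ that contradicts $u\neq 0$), and separately that $(\h,x^*)\cong(\h,y^*)$; it never shows that an \emph{arbitrary} linear form $\theta$ is isomorphic to one of the three, which is exactly the exhaustiveness step you supply. Your two invariants ($\theta=0$ and $\theta(z)=0$, the first via the determinant $(ae-bd)^p=u^p\neq 0$ of the $p$-th-power system, the second via $\theta(z)=u^{p-1}\theta'(z)$) recover the paper's non-isomorphism statements as a byproduct, and your explicit normal-form isomorphisms close the gap. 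Your caveat about the field is also well taken and is not addressed in the paper: Section 4 only assumes $\car \F=p\geq 3$, yet reducing family (iii) to $z^*$ requires solving $u^{p-1}=\theta(z)$, and families (ii), (iii) require $p$-th roots, so the count of exactly three classes needs $\F$ perfect with surjective $(p-1)$-th power map (e.g.\ algebraically closed, as the paper does assume in its characteristic-$2$ section); over $\F_p$ the scalar $\theta(z)$ survives as an extra invariant and the statement fails as an exhaustive classification. The only point worth adding is that the theorem as literally worded (``there are three non-isomorphic\dots'') could be read as the weaker existence claim the paper actually proves, but the subsequent cohomology computations treat the list as exhaustive, so your reading, and hence your extra work, is the right one.
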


\begin{proof}
    \begin{itemize}
    \item[$\bullet$] First, we will show that $(\h,x^*)$ is isomorphic to $(\h,y^*)$. By setting $\theta=x^*$ and $\theta'=y^*,$ Conditions (\ref{resthiso}) reduce to $\{u=b^p,~e^p=0\}$. It suffices to choose $e=0, b\neq 0$ and $d=-b^{p-1}$ to build a suitable restricted isomorphism between $(\h,x^*)$ and $(\h,y^*)$.
    \item[$\bullet$] Let $\theta=0$ and $\theta'=x^*$. Then, Conditions (\ref{resthiso}) reduce to $\{a^p=0,~d^p=0\}$. But, this is impossible since $u=ae-bd\neq0$. Therefore, $(\h,0)$ and $(\h,x^*)$ are not isomorphic.
    \item[$\bullet$] Let $\theta=0$ and $\theta'=z^*$. Then, Conditions (\ref{resthiso}) reduce to $\{c^p=0,~f^p=0,~u^p=0\}$. But, this is impossible since $u\neq0$. Therefore, $(\h,0)$ and $(\h,z^*)$ are not isomorphic.
    \item[$\bullet$] Let $\theta=x^*$ and $\theta'=z^*$. Then, Conditions (\ref{resthiso}) reduce to $\{c^p=u,~f^p=0,~u^p=0\}$. But, this is impossible since $u\neq0$. Therefore, $(\h,x^*)$ and $(\h,z^*)$ are not isomorphic.
    \end{itemize}
\end{proof}

\begin{rmq} The restricted algebras $(\h,0)$ and $(\h,x^*)$ appeared in \cite{SU16} and are $p$-nilpotent.
\end{rmq}

\subsection{Restricted cohomology of restricted Heisenberg algebras}

In this section, we compute the second restricted cohomology groups of the restricted Heisenberg algebras with adjoint coefficients. Let $\theta$ be a linear form on the (ordinary) Heisenberg algebra. We denote by $(\h,\theta)$ the restricted Heisenberg algebra obtained with $\theta$ (see Proposition \ref{classifheisenberg}). We also denote $H^2_*(\h,\theta)=H^2_*((\h,\theta),(\h,\theta))$ the second restricted cohomology group of $(\h,\theta)$ with adjoint coefficients.
\bigskip

\subsubsection{The case $p>3$}

Let $\F$ be a field of characteristic $p>3$ and let $\varphi\in C_{CE}^2(\h,\h)$.
Since  the (ordinary) Heisenberg algebra $\h$ is nilpotent of order $2$ and $p>3$, any $p$-semilinear map $\w:\h\rightarrow\h$ satisfies the $(*)$-property with respect to $\varphi$. Therefore, for any linear form $\theta$ on $\h$, we have $C_{*}^2(\h,\theta)=\Hom_{\F}(\Lambda^2\h,\h)\oplus\Hom_{\F}(\overline{\h},\h)$ as vector spaces.

\begin{lem}\label{ordcocycle}
Let $\h$ be the ordinary Heisenberg algebra. Let $\varphi\in C_{CE}^2(\h,\h)$ given by
\begin{equation}\label{stdrphi}
    \begin{cases}
            \varphi(x,y)&=ax+by+cz\\
            \varphi(x,z)&=dx+ey+fz\\
            \varphi(y,z)&=gx+hy+iz
    \end{cases}
\end{equation}
with parameters $a,b,c,d,e,f,g,h,$ belonging to $\F$. Then, $\varphi$ is a $2$-cocycle of the Chevalley-Eilenberg cohomology if and only if $h=-d$.
\end{lem}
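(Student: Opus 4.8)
The plan is to exploit the fact that for a three-dimensional Lie algebra the cocycle condition $d^2_{CE}\varphi=0$ collapses to a single scalar identity. Since $d^2_{CE}\varphi$ is an alternating trilinear map on $\h$ valued in $\h$ and $\dim\h=3$, it is entirely determined by its value on the basis triple $(x,y,z)$: every other triple of basis vectors repeats an element and hence gives $0$ by skew-symmetry. So the first step is to record that $\varphi\in Z^2_{CE}(\h,\h)$ if and only if $d^2_{CE}\varphi(x,y,z)=0$.

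Next I would invoke the explicit second Chevalley-Eilenberg differential recalled in the obstruction section, with the adjoint action $u\cdot m=[u,m]$, namely
\[ d^2_{CE}\varphi(x,y,z)=\varphi([y,z],x)-\varphi([x,z],y)+\varphi([x,y],z)-[x,\varphi(y,z)]+[y,\varphi(x,z)]-[z,\varphi(x,y)], \]
and substitute the Heisenberg relations $[x,y]=z$ and $[x,z]=[y,z]=0$. The first three terms vanish immediately: the first two because $[y,z]=[x,z]=0$, and the third because $\varphi([x,y],z)=\varphi(z,z)=0$ by skew-symmetry. For the remaining three I would use that $z$ is central, so that $[z,\varphi(x,y)]=0$, while $[x,\varphi(y,z)]=[x,gx+hy+iz]=hz$ and $[y,\varphi(x,z)]=[y,dx+ey+fz]=-dz$. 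Collecting the surviving contributions yields $d^2_{CE}\varphi(x,y,z)=-hz-dz=-(d+h)z$, which is zero precisely when $d+h=0$, i.e. $h=-d$.

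There is no real obstacle here: the statement is a short direct computation, and the only points demanding care are the sign conventions in the differential, the bookkeeping of the adjoint action, and the initial reduction to the single triple $(x,y,z)$. Once that reduction is made, nine of the twelve potential contributions vanish outright because $z$ spans the center and $\h$ is $2$-step nilpotent. It is worth noting that the parameters $a,b,c,e,f,g,i$ never enter, so the cocycle condition constrains only the single combination $d+h$, which is exactly the asserted relation $h=-d$.
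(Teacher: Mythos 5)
Your proof is correct and follows essentially the same route as the paper: both reduce the cocycle condition to the single basis triple $(x,y,z)$, substitute the Heisenberg relations into the explicit second Chevalley--Eilenberg differential, and observe that everything vanishes except the terms $[x,\varphi(y,z)]$ and $[y,\varphi(x,z)]$, yielding $(h+d)z=0$. Your write-up is somewhat more detailed about why the other contributions vanish, but the argument is the same.
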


\begin{proof}
The only non-trivial 2-cocycle condition on the basis $\{x,y,z\}$ of $\h$ is
\begin{equation}
\varphi([x,y],z)-\varphi([x,z],y)+\varphi([y,z],x)=[x,\varphi(y,z)]-[y,\varphi(x,z)]+[z,\varphi(x,y)],
\end{equation}
which reduces to $(h+d)z=0.$
\end{proof}
Let $(\varphi,\w)\in C^2_*(\h,\h)$. As the $k$-folds brackets vanish for $k>2$ and $p>3$, we have

\begin{equation}\label{pcocyl}
    \ind^2(\varphi,\w)(v,w)=\varphi\left(v,w^{[p]}\right)+[v,\w(w)],~\forall v,w\in \h.
\end{equation}

\begin{lem}\label{hrescocy}
The restricted $2$-cocycles for $(\h,\theta)$ are given by pairs $(\varphi,\w)$, where
\begin{itemize}
    \item[$\bullet$] \underline{Case $\theta=0$}:
  
        \begin{equation}\begin{cases}
            \varphi(x,y)&=ax+by+cz\\
            \varphi(x,z)&=dx+ey+fz\\
            \varphi(y,z)&=gx-dy+iz\\
        \end{cases}
        ~~~~~~~~~~
        \begin{cases}
            \w(x)&=\gamma z\\
            \w(y)&=\epsilon z\\
            \w(z)&=\kappa z\\
        \end{cases}\end{equation}

    \item[$\bullet$] \underline{Case $\theta=x^*$}:
  
        \begin{equation}\begin{cases}
            \varphi(x,y)&=ax+by+cz\\
            \varphi(x,z)&=fz\\
            \varphi(y,z)&=iz\\
        \end{cases}
        ~~~~~~~~~~
        \begin{cases}
            \w(x)&=ix-fy+\gamma z\\
            \w(y)&=\epsilon z\\
            \w(z)&=\kappa z;\\
        \end{cases}\end{equation}

    \item[$\bullet$] \underline{Case $\theta=z^*$}:
  
        \begin{equation}\begin{cases}
            \varphi(x,y)&=ax+by+cz\\
            \varphi(x,z)&=fz\\
            \varphi(y,z)&=iz;\\
        \end{cases}
        ~~~~~~~~~~
        \begin{cases}
            \w(x)&=\gamma z\\
            \w(y)&=\epsilon z\\
            \w(z)&=ix-fy+\kappa z,\\
        \end{cases}\end{equation} where all the parameters $a,b,c,d,e,f,h,i,\gamma,\epsilon,\kappa$ belong to $\F$.
\end{itemize}
\end{lem}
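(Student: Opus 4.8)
The plan is to read off the structure of a restricted $2$-cocycle directly from the two conditions packaged in $d^2_*(\varphi,\w)=\left(d^2_{CE}\varphi,\ind^2(\varphi,\w)\right)$, namely $d^2_{CE}\varphi=0$ together with $\ind^2(\varphi,\w)=0$, and then to solve the resulting linear systems separately for $\theta=0$, $\theta=x^*$ and $\theta=z^*$. Since $C^2_*(\h,\theta)=\Hom_{\F}(\Lambda^2\h,\h)\oplus\Hom_{\F}(\overline{\h},\h)$, I may freely parametrise $\varphi$ as in \eqref{stdrphi} and write $\w(x),\w(y),\w(z)$ as arbitrary elements of $\h$ in the basis $\{x,y,z\}$.

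First I would dispose of the Chevalley--Eilenberg part: by Lemma \ref{ordcocycle}, the condition $d^2_{CE}\varphi=0$ is equivalent to the single relation $h=-d$ in the parametrisation \eqref{stdrphi}, which fixes the shape of $\varphi$ and eliminates one free parameter. The substantive step is then to exploit the simplified expression \eqref{pcocyl}, which is legitimate precisely because $\h$ is nilpotent of order $2$ and $p>3$: every term of the correction sum in $\ind^2$ contains either a bracket of length $\geq 2$ or a factor $\ad_y^{\,i}$ with $i\geq 1$, and all such terms vanish. Substituting $w^{[p]}=\theta(w)z$ turns the restricted cocycle condition into
\[
\ind^2(\varphi,\w)(v,w)=\theta(w)\,\varphi(v,z)+[v,\w(w)]=0,\qquad v,w\in\{x,y,z\}.
\]

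I would then evaluate this identity on each pair of basis vectors. For any basis vector $w$ with $\theta(w)=0$ the condition collapses to $[v,\w(w)]=0$ for all $v$, which forces $\w(w)$ to be central, i.e.\ a multiple of $z$; this accounts immediately for the entries of $\w$ in each case that are of the form $\gamma z,\epsilon z,\kappa z$. For the single basis vector $w_0$ on which $\theta$ is nonzero (namely $x$ when $\theta=x^*$, and $z$ when $\theta=z^*$), the equation $\varphi(v,z)+[v,\w(w_0)]=0$ taken at $v=x$ and $v=y$ simultaneously forces the $x$- and $y$-components of $\varphi(x,z)$ and $\varphi(y,z)$ to vanish (so $\varphi(x,z)=fz$, $\varphi(y,z)=iz$, and $d=e=g=h=0$) and determines the non-central part of $\w(w_0)$ as $i x-f y$ in terms of the surviving entries of $\varphi$. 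Assembling the surviving parameters in each of the three cases then yields exactly the displayed families.

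The computation is entirely linear, so there is no genuine obstacle; the only care needed is bookkeeping of the adjoint action $[v,\w(w)]=\ad_v(\w(w))$ with its signs (in particular $[y,x]=-z$), and the recognition that the cocycle condition cleanly decouples the central from the non-central part of $\w$. The two points worth flagging are that the reduction to \eqref{pcocyl} genuinely requires $p>3$ (to kill the correction sum), and that the Chevalley--Eilenberg constraint $h=-d$ is the operative restriction on $\varphi$ in the case $\theta=0$, while in the cases $\theta=x^*$ and $\theta=z^*$ it is automatically satisfied since there one already finds $d=h=0$.
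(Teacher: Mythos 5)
Your proposal is correct and follows essentially the same route as the paper's proof: impose $h=-d$ via Lemma \ref{ordcocycle}, reduce $\ind^2$ to the form \eqref{pcocyl} using nilpotency of order $2$ and $p>3$, and evaluate $\theta(w)\varphi(v,z)+[v,\w(w)]=0$ on basis pairs in each of the three cases. Your uniform split between basis vectors with $\theta(w)=0$ (forcing $\w(w)$ central) and the single $w_0$ with $\theta(w_0)\neq 0$ (forcing $\varphi(x,z)=fz$, $\varphi(y,z)=iz$ and $\w(w_0)=ix-fy+(\cdot)z$) is just a slightly tidier organization of the same case-by-case evaluation the paper carries out.
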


\begin{proof}
Let $\varphi\in Z_{CE}^2(\h,\h)$ given by Lemma \ref{ordcocycle}, let $\theta$ be a linear form on $\h$ and let $\w:\h\rightarrow\F$ a map having the $(*)$-property w.r.t $\varphi$, given on the basis of $\h$ by
\begin{equation}
        \begin{cases}
            \w(x)&=\alpha x+\beta y+\gamma z\\
            \w(y)&=\lambda x+\mu y+\epsilon z\\
            \w(z)&=\delta x+\eta y+\kappa z,\\
        \end{cases}
\end{equation}
with $\alpha,\beta,\gamma,\lambda,\mu,\epsilon,\delta,\eta,\kappa$ belonging to $\F$. Suppose moreover that $(\varphi,\w)\in Z_*^2(\h,\theta)$.

\begin{itemize}
\item The case $\theta=0$. We evaluate  Equation (\ref{pcocyl}) on elements of the basis $\{x,y,z\}$ of $(\h,0)$.
\begin{align*}
0&=\varphi\left(x,x^{[p]}\right)+[x,\w(x)]=\varphi\left(x,0\right)+[x,\beta y]+[x,\gamma z]=\beta z\implies\beta=0\\
0&=\varphi\left(y,y^{[p]}\right)+[y,\w(y)]=\varphi\left(y,0\right)+[y,\lambda x]+[y,\epsilon z]=\lambda z\implies\lambda=0;\\
0&=\varphi\left(x,y^{[p]}\right)+[x,\w(y)]=\varphi\left(x,0\right)+[x,\mu y]+[x,\epsilon z]=\mu z\implies\mu=0;\\
0&=\varphi\left(x,z^{[p]}\right)+[x,\w(z)]=\varphi\left(x,0\right)+[x,\eta z]+[x,\kappa z]=\eta z\implies\eta=0;\\
0&=\varphi\left(y,z^{[p]}\right)+[y,\w(z)]=\varphi\left(y,0\right)+[y,\delta x]+[y,\kappa z]=-\delta z\implies\delta=0;\\
0&=\varphi\left(y,x^{[p]}\right)+[y,\w(x)]=\varphi\left(y,0\right)+[y,\alpha x]+[y,\gamma z]=-\alpha z\implies\alpha=0.
\end{align*}The other possible equations obtained with $(z,z),~(z,y)$ and $(z,x)$ are trivial. 

\item The case $\theta=x^*$. We evaluate Equation (\ref{pcocyl}) on elements of the basis $\{x,y,z\}$ of $(\h,x^*)$.
\begin{align*}
0&=\varphi\left(x,x^{[p]}\right)+[x,\w(x)]=\varphi\left(x,z\right)+[x,\beta y]=dx+ey+fz+\beta z\implies\beta=-f,~d=e=0;\\
0&=\varphi\left(y,y^{[p]}\right)+[y,\w(y)]=\varphi\left(y,0\right)+\lambda[y, x]=-\lambda z \implies\lambda=0;\\
0&=\varphi\left(x,y^{[p]}\right)+[x,\w(y)]=\varphi\left(x,0\right)+\mu[x,y]=\eta z \implies\mu=0;\\
0&=\varphi\left(x,z^{[p]}\right)+[x,\w(z)]=\varphi\left(x,0\right)+\eta[x,y]=\eta z \implies\eta=0;\\
0&=\varphi\left(y,z^{[p]}\right)+[y,\w(z)]=\varphi\left(y,0\right)+\delta[y,x]=-\delta z \implies\delta=0;\\
0&=\varphi\left(y,x^{[p]}\right)+[y,\w(x)]=\varphi\left(y,z\right)+\alpha[y,x]=gx+iz-\alpha z\implies \alpha=i,~g=0.
\end{align*}The other possible equations obtained with $(z,z),~(z,y)$ and $(z,x)$ are trivial. 

\item The case $\theta=z^*$ is analog to the case $\theta=x^*$.
\end{itemize}
\end{proof}

\begin{lem}\label{hrestcob}
The restricted $2$-coboundaries for $(\h,\theta)$ are given by pairs $(\varphi,\w)$, where

        $$\begin{cases}
            \varphi(x,y)&=Ax+By+\tilde{C}z\\
            \varphi(x,z)&=-Hz\\
            \varphi(y,z)&=Gz,\\
        \end{cases}$$ with $A,B,\tilde{C},G,H$ belonging to $\F$ and
\begin{itemize}
    \item[$\bullet$] \underline{Case $\theta=0$}: $\w=0;$ 

   \item[$\bullet$] \underline{Case $\theta=x^*$}: $\w(x)=Gx+Hy+Iz,~\w(y)=\w(z)=0;$
  
    \item[$\bullet$] \underline{Case $\theta=z^*$}:
    $\w(x)=\w(y)=0,~\w(z)=Gx+Hy+Iz$.
\end{itemize}

\end{lem}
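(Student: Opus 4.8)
The restricted $2$-coboundaries are by definition the image $B^2_*(\h,\h)=\im(d^1_*)$, so the plan is to apply $d^1_*$ to a generic restricted $1$-cochain and read off the resulting pairs. Since $C^1_*(\h,\h)=\Hom_\F(\h,\h)$, I would start from an arbitrary linear map $\psi$ written on the basis as $\psi(x)=a_1x+a_2y+a_3z$, $\psi(y)=b_1x+b_2y+b_3z$, $\psi(z)=c_1x+c_2y+c_3z$, and recall that $d^1_*(\psi)=\bigl(d^1_{CE}\psi,\ \ind^1(\psi)\bigr)$. The membership $\bigl(d^1_{CE}\psi,\ind^1(\psi)\bigr)\in C^2_*(\h,\h)$ is automatic, since $\ind^1(\psi)$ always has the $(*)$-property with respect to $d^1_{CE}\psi$; hence no compatibility needs to be re-checked and the whole content is a computation of the two components.

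The key simplification concerns the $\w$-component $\ind^1(\psi)(v)=\psi(v^{[p]})-\ad_v^{p-1}(\psi(v))$. Because $\h$ is nilpotent of order $2$ and $p>3$, the iterated adjoint $\ad_v^{p-1}$ vanishes identically, so $\w(v)=\psi(v^{[p]})$. By Proposition \ref{classifheisenberg} every $p$-th power lies in $\F z$ (namely $x^{[p]}=\theta(x)z$, $y^{[p]}=\theta(y)z$, $z^{[p]}=\theta(z)z$), so $\w$ is completely determined by the single vector $\psi(z)=c_1x+c_2y+c_3z$. This is what produces the three cases at once: for $\theta=0$ all $p$-th powers vanish and $\w=0$; for $\theta=x^*$ only $x^{[p]}=z$ survives, giving $\w(x)=\psi(z)$ and $\w(y)=\w(z)=0$; for $\theta=z^*$ only $z^{[p]}=z$ survives, giving $\w(z)=\psi(z)$ and $\w(x)=\w(y)=0$.

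For the $\varphi$-component I would evaluate $d^1_{CE}\psi$ on the three basis pairs using the single nonzero bracket $[x,y]=z$. A direct computation shows that $\varphi(x,z)$ and $\varphi(y,z)$ are multiples of $z$ depending only on $c_2$ and $c_1$ respectively, while $\varphi(x,y)$ has $x$- and $y$-coefficients again controlled by $c_1,c_2$ and a $z$-coefficient of the shape $c_3-a_1-b_2$. Writing $A,B$ for the common coefficients coming from $c_1,c_2$, setting $\tilde C=c_3-a_1-b_2$, and observing that consequently $G=A$ and $H=B$, I obtain exactly the stated forms of $\varphi$; moreover the very same $c_1,c_2,c_3$ reappear as the coefficients $G,H,I$ of $\w$ in the cases $\theta=x^*,z^*$, which is precisely why the two components share their parameters. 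Finally I would note that the entries $a_2,a_3,b_1,b_3$ of $\psi$ never occur, and that $a_1,b_2$ enter only through the combination $a_1+b_2$; this pins down the free parameters ($A,B,\tilde C$ for $\theta=0$, together with the extra $I=c_3$ for $\theta=x^*,z^*$) and confirms the parameter count.

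The main obstacle is organizational rather than conceptual: one must track that $\psi(z)$ alone governs several distinct outputs — the $x,y$-part of $\varphi(x,y)$, the coefficients of $\varphi(x,z)$ and $\varphi(y,z)$, and the whole of $\w$ — so that the labels $A,B,G,H$ are genuinely identified rather than independent, and that the remaining entries of $\psi$ contribute nothing beyond the single combination $a_1+b_2$. Verifying this coupling carefully, and checking that no spurious freedom leaks in, is the only delicate point.
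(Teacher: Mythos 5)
Your proposal is correct and follows essentially the same route as the paper: apply $d^1_*$ to a generic linear $\psi$, use the fact that $\h$ is nilpotent of order $2$ with $p>3$ to reduce $\ind^1(\psi)(v)$ to $\psi(v^{[p]})=\theta(v)\psi(z)$, and read off the constraints on $d^1_{CE}\psi$ from the single bracket $[x,y]=z$. Your explicit identification of the coupling $G=A$ and $H=B$ is exactly what the paper's proof establishes (its relations $a=i=G$, $b=-f=H$), even though the lemma's statement displays these as if they were independent parameters.
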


\begin{proof}
Let $\varphi\in C^2_{CE}(\h,\h),$ given on the basis of $\h$ by Equation (\ref{stdrphi}). Suppose that $\varphi=d^1_{CE}\psi,$ with $\psi:\h\rightarrow\h$ given by
\begin{equation}
\begin{cases}
            \psi(x,y)&=Ax+By+Cz\\
            \psi(x,z)&=Dx+Ey+Fz\\
            \psi(y,z)&=Gx+Hy+Iz,
        \end{cases}
\end{equation}
with $A,B,C,D,E,F,G,H,I\in \F$. Using the coboundary condition $\varphi=d^1_{CE}\psi,$ it is easy to show that $d=e=g=h=0,~a=i=G,~b=-f=H,~c=\Tilde{C},$ with $\tilde{C}=I-E-A$. For the restricted part, suppose that $(\varphi,\w)\in B^2_*(\h,\theta)$. The coboundary condition is then given by
\begin{equation}\label{cob}
    \w(u)=\psi\left(u^{[p]}\right)-\ad_u^{p-1}\circ~\psi(u),~u\in \h.
\end{equation}

By evaluating Equation (\ref{cob}) on the basis of $\h$, we obtain

\begin{equation}
\begin{cases}
            \w(x)&=\theta(x)(Gx+Hy+Iz)\\
            \w(y)&=\theta(y)(Gx+Hy+Iz)\\
            \w(z)&=\theta(z)(Gx+Hy+Iz).
        \end{cases}
\end{equation}
Choosing $\theta$ in $\{0,x^*,z^*\}$, we obtain the result.

\end{proof}

\begin{thm}
We have $\dim_{\F}\left(H^2_*(\h,0)\right)=8$ and $\dim_{\F}\left(H^2_*(\h,x^*)\right)=\dim_{\F}\left(H^2_*(\h,z^*)\right)=4.$
\begin{itemize}
\item[$\bullet$] A basis for $H^2_*(\h,0)$ is given by $\{(\varphi_1,
0), (\varphi_2,0), (\varphi_3,0), (\varphi_4,0), (\varphi_5,0), (0,\w_1), (0,\w_2), (0,\w_3)\}$, with
\begin{align*}
\varphi_1(x,z)&=z;~\varphi_2(y,z)=z;~\varphi_3(x,z)=-\varphi_3(y,z)=x;~\varphi_4(x,z)=y;~\varphi_5(y,z)=y;\\
\w_1(x)&=z;~\w_2(y)=z;~\w_3(z)=z.
\end{align*}
(We only write non-zero images).
\item[$\bullet$] A basis for $H^2_*(\h,x^*)$ is given by $\{(\varphi_1,
0), (\varphi_2,0), (0,\w_1), (0,\w_2)\}$, with
$$\varphi_1(x,y)=x;~\varphi_2(x,y)=y;~
\w_1(y)=z;~\w_2(z)=z.$$
\item[$\bullet$] A basis for $H^2_*(\h,z^*)$ is given by $\{(\varphi_1,
0), (\varphi_2,0), (0,\w_1), (0,\w_2)\}$, with
$$\varphi_1(x,y)=x;~\varphi_2(x,y)=y;~
\w_1(y)=z;~\w_2(x)=z.$$
\end{itemize}
\end{thm}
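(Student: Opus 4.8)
The plan is to compute each group directly as the quotient $H^2_*(\h,\theta)=Z^2_*(\h,\theta)/B^2_*(\h,\theta)$, feeding in the completely explicit descriptions of the restricted $2$-cocycles from Lemma \ref{hrescocy} and of the restricted $2$-coboundaries from Lemma \ref{hrestcob}. Since both spaces are cut out by explicit linear conditions on finitely many parameters in $\F$, the entire argument collapses to finite-dimensional linear algebra; no further structural input is required, and the characteristic assumption $p>3$ enters only through the vanishing of the $p$-fold brackets on $\h$ that was already used to obtain those two lemmas.

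First I would read off $\dim_{\F}Z^2_*(\h,\theta)$ from Lemma \ref{hrescocy} by counting free parameters: for $\theta=0$ the cocycles depend on $a,b,c,d,e,f,g,i,\gamma,\epsilon,\kappa$, so $\dim_{\F}Z^2_*=11$, whereas for $\theta=x^*$ and $\theta=z^*$ the extra constraints coming from $\ind^2$ cut this down to the eight parameters $a,b,c,f,i,\gamma,\epsilon,\kappa$. Next I would determine $\dim_{\F}B^2_*(\h,\theta)$. Rather than counting the parameters of Lemma \ref{hrestcob} (which are not independent), the clean route is $\dim_{\F}B^2_*=\dim_{\F}C^1_*(\h,\h)-\dim_{\F}\ker d^1_*$, where $C^1_*(\h,\h)=\Hom_{\F}(\h,\h)$ has dimension $9$. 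For $\theta=0$ every $u^{[p]}$ vanishes and $\ad_u^{p-1}\psi(u)=0$ because $\ad_u^2=0$ on $\h$ and $p>3$, so $\ind^1\psi\equiv0$ identically; hence $\ker d^1_*=\ker d^1_{CE}$, which one checks is $6$-dimensional, giving $\dim_{\F}B^2_*=3$. For $\theta\in\{x^*,z^*\}$ the map $\ind^1\psi$ is no longer zero (for $\theta=x^*$ one has $\ind^1\psi(x)=\psi(z)$), which imposes one more independent condition and drops $\dim_{\F}\ker d^1_*$ to $5$, so $\dim_{\F}B^2_*=4$. Subtracting yields $\dim_{\F}H^2_*=8,4,4$, which already establishes the numerical assertions.

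To produce the explicit bases I would then write the coboundary subspace in the coordinates of the cocycle space and quotient. The key observation is that the coboundary generators do not act diagonally on the natural parameters but glue pairs of them: for $\theta=0$ one generator simultaneously shifts the $x$-component of $\varphi(x,y)$ and the $z$-component of $\varphi(y,z)$ (the parameters $a$ and $i$), a second links the $y$-component of $\varphi(x,y)$ with the $z$-component of $\varphi(x,z)$ (the parameters $b$ and $f$), and a third simply kills the $z$-component $c$ of $\varphi(x,y)$; modding these out leaves exactly eight classes, which I would present by selecting one clean representative in each glued pair, namely the surviving $\varphi$-directions together with the three restricted directions $(0,\w_1),(0,\w_2),(0,\w_3)$ (untouched, since all coboundaries have vanishing $\w$-part when $\theta=0$). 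For $\theta=x^*$ and $\theta=z^*$ the same bookkeeping shows that the classes of $\varphi(x,y)=x$ and $\varphi(x,y)=y$, together with two of the $\w$-directions, form a basis, the remaining cocycle parameters being absorbed by coboundaries.

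Finally I would confirm that each proposed generator is genuinely a restricted $2$-cocycle (immediate from the shape prescribed in Lemma \ref{hrescocy}) and that the generators are linearly independent modulo $B^2_*$. I expect this last independence check to be the one genuinely delicate step, precisely because the coboundaries identify pairs such as $(a,i)$ and $(b,f)$ rather than acting on single coordinates: one must verify that the chosen representatives are not accidentally cohomologous to one another, and in the $\theta=0$ case that none of the mixed directions $\varphi(x,z)=x$, $\varphi(x,z)=y$, $\varphi(y,z)=x$ collapses after passing to the quotient. Everything else is routine substitution into the formulas for $d^1_{CE}$, $\ind^1$ and $\ind^2$.
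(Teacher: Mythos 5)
Your proposal is correct and follows essentially the same route as the paper: both arguments reduce to finite-dimensional linear algebra on the explicit cocycles of Lemma \ref{hrescocy} and coboundaries of Lemma \ref{hrestcob}, the only variation being that you obtain $\dim_{\F}B^2_*$ by rank--nullity on $d^1_*$ (via $\dim_{\F}\Der(\h)=6$ and the extra condition $\psi(z)=0$ when $\theta\neq 0$), whereas the paper simply exhibits the three, resp.\ four, independent coboundaries $\varphi_6,\varphi_7,\varphi_8,\dots$ directly and completes them to a basis of $Z^2_*$. Your detour is harmless and in fact prudent, since the parameters $A,B,\tilde C,G,H$ in the printed statement of Lemma \ref{hrestcob} are not independent (the proof of that lemma forces $A=G$ and $B=H$), so a naive parameter count there would overshoot. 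One caution on your final step: the verification you call ``immediate'' does not go through verbatim for the printed basis of $H^2_*(\h,0)$, because $\varphi_3$ and $\varphi_5$ as stated violate the cocycle condition $h=-d$ of Lemma \ref{ordcocycle}; the representatives actually surviving the quotient in the $d$- and $g$-directions are $\varphi_3(x,z)=x$, $\varphi_3(y,z)=-y$ and $\varphi_5(y,z)=x$, which is presumably what was intended. With that adjustment your dimension counts ($11-3=8$ and $8-4=4$) and the resulting bases agree with the paper.
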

\begin{proof}

With Lemma \ref{hrestcob}, we deduce that $\{(\varphi_6,0),(\varphi_7,0),(\varphi_8,0)\}$ is a basis of $B^2_*(\h,0)$, with
$$\varphi_6(x,y)=x,~\varphi_6(y,z)=z;~~\varphi_7(x,y)=y,~\varphi_7(x,z)=-z;~~\varphi(x,y)=z.$$
Using Lemma \ref{hrescocy}, it is not difficult to complete the above basis to form a basis for $Z_*^2(\h,0)$ and therefore to find the basis of $H^2_*(\h,0)$. The two other cases with $\theta=x^*$ or $\theta=z^*$ are similar.

\end{proof}
\subsubsection{The case $p=3$}

Let $\F$ be a field of characteristic $3$ and let $\varphi\in C_{CE}^2(\h,\h)$. Then, a map $\w:\h\rightarrow\h$ has the $(*)$-property with respect to $\varphi$ if and only if
\begin{equation}\label{3st}
\w(u+v)=\w(u)+\w(v)+2\left(\varphi([u,v],u)+[\varphi(u,v),u]\right)+\varphi([u,v],v)+[\varphi(u,v),v],~u,v\in \h.
\end{equation}

Let $\theta$ be a linear form on $\h$ and let $(\varphi,\w)\in C_{*}^2(\h,\theta)$. We recall that $\h$ is endowed with a $3$-map $(\cdot)^{[3]}$ given by $\theta$ (see Proposition \ref{classifheisenberg}). Since $p=3$, we have
\begin{equation}\label{3ind}
    \ind^2(\varphi,\w)(u,v)=\varphi\left(u,v^{[3]}\right)-\left[\varphi([u,v],v),v\right]+[u,\w(v)].
\end{equation}

\begin{lem}\label{3hrescocy}
The restricted $2$-cocycles for $(\h,\theta)$ are given by pairs $(\varphi,\w)$, where
\begin{itemize}
    \item[$\bullet$] \underline{Case $\theta=0$}:
  
        \begin{equation}\begin{cases}
            \varphi(x,y)&=ax+by+cz\\
            \varphi(x,z)&=dx+ey+fz\\
            \varphi(y,z)&=gx-dy+iz\\
        \end{cases}
        ~~~~~~~~~~
        \begin{cases}
            \w(x)&=-ex+\gamma z\\
            \w(y)&=dy+\epsilon z\\
            \w(z)&=\kappa z\\
        \end{cases}\end{equation}

    \item[$\bullet$] \underline{Case $\theta=x^*$}: same as Lemma \ref{hrescocy};

    \item[$\bullet$] \underline{Case $\theta=z^*$}: same as Lemma \ref{hrescocy}; 

where all the parameters $a,b,c,d,e,f,h,i,\gamma,\epsilon,\kappa$ belong to $\F$.
\end{itemize}
\end{lem}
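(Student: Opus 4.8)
The plan is to unwind the definition of the restricted $2$-cocycle condition for the pair $(\varphi,\w)$: since $Z^2_*(\h,\theta)=\Ker(d^2_*)$ and $d^2_*(\varphi,\w)=\left(d^2_{CE}\varphi,\ind^2(\varphi,\w)\right)$, a pair $(\varphi,\w)\in C^2_*(\h,\theta)$ is a restricted $2$-cocycle precisely when $d^2_{CE}\varphi=0$ and $\ind^2(\varphi,\w)=0$. The first condition is already settled by Lemma \ref{ordcocycle}, which forces $h=-d$ and leaves $\varphi$ in the form $\varphi(x,y)=ax+by+cz$, $\varphi(x,z)=dx+ey+fz$, $\varphi(y,z)=gx-dy+iz$. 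Thus the entire content of the lemma reduces to extracting the constraints imposed by $\ind^2(\varphi,\w)=0$ on the second component $\w$, exactly as in the proof of Lemma \ref{hrescocy}, but now using the characteristic-$3$ expression (\ref{3ind}) for $\ind^2$ in place of (\ref{pcocyl}).

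First I would write $\w$ on the basis, $\w(x)=\alpha x+\beta y+\gamma z$, $\w(y)=\lambda x+\mu y+\epsilon z$, $\w(z)=\delta x+\eta y+\kappa z$, recalling that by (\ref{3st}) the map $\w$ is then determined on all of $\h$. Next I would evaluate $\ind^2(\varphi,\w)(u,v)=\varphi(u,v^{[3]})-[\varphi([u,v],v),v]+[u,\w(v)]$ on every ordered pair $(u,v)$ with $u,v\in\{x,y,z\}$. Because the only nonzero bracket is $[x,y]=z$ and $z$ is central, most pairs are immediate: the pairs $(x,x),(x,z),(y,y),(y,z)$, together with the trivial pairs $(z,\cdot)$, force the vanishing of the off-diagonal coefficients $\beta,\eta,\lambda,\delta$ through the term $[u,\w(v)]$, while the genuinely informative pairs are $(x,y)$ and $(y,x)$, where the middle correction term $-[\varphi([u,v],v),v]$ is nonzero and ties the surviving linear parts of $\w$ to the coefficients of $\varphi$.

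The case distinction is then governed entirely by the value of $v^{[3]}$, which enters through $\varphi(u,v^{[3]})$ and is dictated by $\theta$ via Proposition \ref{classifheisenberg}. For $\theta=0$ all cubes $v^{[3]}$ vanish, so the term $\varphi(u,v^{[3]})$ drops out and the only nontrivial contributions come from the middle term: this is precisely the phenomenon absent in characteristic $p>3$, and it is what produces the nonzero linear parts of $\w(x)$ and $\w(y)$ recorded in the statement, the remaining freedom being the central coefficients $\gamma,\epsilon,\kappa$. For $\theta=x^*$ (resp. $\theta=z^*$) one has $x^{[3]}=z$ (resp. $z^{[3]}=z$) and the other two cubes zero; feeding this into $\varphi(u,v^{[3]})$ on the pairs $(u,x)$ (resp. $(u,z)$) forces $d=e=g=0$, which in turn annihilates the middle-term contribution on $(x,y)$ and $(y,x)$, so the system collapses to exactly the one solved in Lemma \ref{hrescocy}; hence these two cases reproduce the $p>3$ answer verbatim, as claimed.

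The main obstacle I anticipate is computational bookkeeping rather than anything conceptual: one must carefully track the extra middle term $-[\varphi([u,v],v),v]$ of the characteristic-$3$ induced map, which is responsible for all differences from Lemma \ref{hrescocy} and is easy to mishandle because of the nested brackets and the skew-symmetry substitutions such as $\varphi(z,y)=-\varphi(y,z)$ and $\varphi(-z,x)=\varphi(x,z)$. A secondary point to verify, so that the parametrization of $\w$ by its basis values is legitimate, is that the $(*)$-property (\ref{3st}) genuinely extends any choice of basis values to a well-defined map on $\h$; the symmetry obstruction in $u,v$ is a multiple of $3$ and therefore vanishes, which is where characteristic $3$ is used in an essential way.
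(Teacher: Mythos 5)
Your proposal is correct and takes essentially the same route as the paper, whose entire proof of this lemma is the one-line remark that it is ``similar to Lemma \ref{hrescocy}, using Equation (\ref{3ind})'': you reuse Lemma \ref{ordcocycle} for the Chevalley-Eilenberg component and evaluate the characteristic-$3$ induced map on basis pairs, correctly isolating the extra middle term $-[\varphi([u,v],v),v]$ as the source of every deviation from the $p>3$ case. Your additional observations --- that for $\theta=x^*$ and $\theta=z^*$ the cube terms force $d=e=g=0$ so the system collapses to that of Lemma \ref{hrescocy}, and that the $(*)$-property (\ref{3st}) is symmetric in $u,v$ only because $3=0$ --- are accurate and in fact more detailed than what the paper records.
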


\begin{proof}
    Similar to Lemma \ref{hrescocy} proof, by using Equation \ref{3ind}.
\end{proof}

A similar computation shows that the restricted $2$-coboundaries are the same as in Lemma \ref{hrestcob}. Nevertheless, one should take care that the $(*)$-property is no longer trivial if $p=3$, but given by Equation $(\ref{3st})$.

\begin{thm}
We have $\dim_{\F}\left(H^2_*(\h,0)\right)=8$ and $\dim_{\F}\left(H^2_*(\h,x^*)\right)=\dim_{\F}\left(H^2_*(\h,z^*)\right)=4.$
\begin{itemize}
\item[$\bullet$] A basis for $H^2_*(\h,0)$ is given by $\{(\varphi_1,
\w_1), (\varphi_2,\w_2), (\varphi_3,0), (\varphi_4,0), (\varphi_5,0), (0,\w_3), (0,\w_4), (0,\w_5)\}$, with
\begin{align*}
\varphi_1(x,z)&=-\varphi_1(y,z)=x,~\w_1(y)=x;~\varphi_2(x,z)=y,~\w_2(x)=x;~\varphi_3(y,z)=z;\\
\varphi_4(x,z)&=z;~\varphi_5(y,z)=y;~\w_3(x)=\w_4(y)=\w_5(z)=z.
\end{align*}
(We only write non-zero identities).
\item[$\bullet$] A basis for $H^2_*(\h,x^*)$ is given by $\{(\varphi_1,
0), (\varphi_2,0), (0,\w_1), (0,\w_2)\}$, with
$$\varphi_1(x,y)=x;~\varphi_2(x,y)=y;~
\w_1(y)=z;~\w_2(z)=z.$$
\item[$\bullet$] A basis for $H^2_*(\h,z^*)$ is given by $\{(\varphi_1,
0), (\varphi_2,0), (0,\w_1), (0,\w_2)\}$, with
$$\varphi_1(x,y)=x;~\varphi_2(x,y)=y;~
\w_1(y)=z;~\w_2(x)=z.$$
\end{itemize}
\end{thm}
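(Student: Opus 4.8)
The plan is to realize each group as the quotient $Z_*^2(\h,\theta)/B_*^2(\h,\theta)$ for $\theta\in\{0,x^*,z^*\}$, feeding in the explicit cocycle description of Lemma \ref{3hrescocy} and the coboundary description recorded just above the statement. First I would read off $\dim Z_*^2(\h,\theta)$ from Lemma \ref{3hrescocy}: for $\theta=0$ the free parameters are the eight entries of $\varphi$ allowed by the Chevalley--Eilenberg condition of Lemma \ref{ordcocycle} (recall that $\varphi(y,z)$ has its $y$-slot pinned to $-d$) together with the three central scalars $\gamma,\epsilon,\kappa$ of $\w$, so $\dim Z_*^2(\h,0)=11$; the same bookkeeping gives $\dim Z_*^2(\h,x^*)=\dim Z_*^2(\h,z^*)=8$.

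Next I would pin down the coboundaries. Since $d_*^1\psi=(d^1_{CE}\psi,\ind^1\psi)$ and $\h$ is $2$-step nilpotent, one has $\ad_u^{2}=0$, whence $\ind^1\psi(u)=\psi(u^{[3]})-\ad_u^{2}\psi(u)=\psi(u^{[3]})$; this is exactly the computation behind Lemma \ref{hrestcob}, so the coboundaries are formally the same as in characteristic $>3$. A short linear-algebra count then gives $\dim B_*^2(\h,0)=3$ (the image is cut out by $a=i$, $b=-f$ with $\w=0$) and $\dim B_*^2(\h,x^*)=\dim B_*^2(\h,z^*)=4$. Subtracting yields $\dim H_*^2(\h,0)=8$ and $\dim H_*^2(\h,x^*)=\dim H_*^2(\h,z^*)=4$, as claimed.

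It then remains to exhibit bases, and here the cases split. For $\theta=x^*$ and $\theta=z^*$ I would observe that the new characteristic-$3$ term $-[\varphi([u,v],v),v]$ in $\ind^2$ (Equation \ref{3ind}) vanishes identically: the cocycle conditions force $\varphi(x,z),\varphi(y,z)\in\langle z\rangle$, and $z$ is central, so this bracket is zero. Consequently the cocycle and coboundary computations reproduce verbatim those of the already-proved $p>3$ theorem, and the same four classes form a basis in each case. The genuinely new phenomenon is the case $\theta=0$: there the term $-[\varphi([u,v],v),v]$ does not vanish, and imposing $\ind^2=0$ forces two non-central components of $\w$ (namely the $x$-component of $\w(x)$ and the $y$-component of $\w(y)$) to equal prescribed entries of $\varphi$. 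Hence among the eight surviving classes, three can be taken with $\w=0$ (the images of the remaining $\varphi$-parameters modulo the three coboundary relations) and three with $\varphi=0$ (the central scalars $\gamma,\epsilon,\kappa$), while the two parameters whose $\w$-shadow is forced to be non-central must be represented by genuinely mixed pairs $(\varphi_i,\w_i)$ with both slots non-zero. This is the essential contrast with the $p>3$ theorem, where every generator is pure.

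The main obstacle is precisely this coupling in the $\theta=0$, characteristic-$3$ case: one must track which scalar parameters of $\varphi$ drag along a forced non-central piece of $\w$, and then choose coset representatives so that the mixed pairs are independent of the pure $\varphi$- and $\w$-classes modulo $B_*^2$. Since the coboundary space has $\w=0$, a cocycle carrying a genuinely forced non-central $\w$-component cannot be a coboundary unless it is zero, which makes the independence check transparent once representatives are chosen; verifying that the eight displayed pairs span the quotient is then routine linear algebra. The computations for $\theta=x^*$ and $\theta=z^*$ are entirely analogous to (indeed, identical with) the $p>3$ case.
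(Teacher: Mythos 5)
Your proposal is correct and follows essentially the same route the paper takes: the paper prints no separate proof of this $p=3$ theorem, but its intended argument is exactly yours --- read off $Z^2_*$ from Lemma \ref{3hrescocy}, observe that $B^2_*$ is unchanged from Lemma \ref{hrestcob} because $\ad_u^{p-1}=\ad_u^{2}=0$ on the $2$-step nilpotent algebra $\h$, and count $11-3=8$ for $\theta=0$ and $8-4=4$ for $\theta=x^*,z^*$. The only point worth flagging is that the representatives your computation actually yields for the two mixed classes in $H^2_*(\h,0)$ are $\varphi_1(x,z)=x$, $\varphi_1(y,z)=-y$, $\w_1(y)=y$ (the parameter $d$) and $\varphi_2(x,z)=y$, $\w_2(x)=-x$ (the parameter $e$); several of the printed entries, e.g.\ $\w_1(y)=x$, $\varphi_1(y,z)=-x$, and the pure pair $(\varphi_5,0)$ with $\varphi_5(y,z)=y$, are incompatible with Lemma \ref{3hrescocy} itself (the latter forces $\w(y)=dy+\epsilon z$ with $d$ pinned to $-\varphi(y,z)|_y$), so your representatives are the consistent ones, while the overall shape of the basis --- two mixed classes, three pure $\varphi$-classes, three pure $\w$-classes --- is exactly as you describe.
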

\subsection{Example of a Lie-Rinehart structure on restricted Heisenberg algebras}

  Let $p>3$, $L=(\h,z^*)$, let $A$ be the associative algebra spanned by elements $e_1$ and $e_2$, $e_1$ being the unit and endowed with the multiplication $e_2e_2=0$. Suppose that $A$ acts on $L$ trivially ($e_1\cdot u=u,~e_2\cdot u=0~\forall u\in \h$). Then, we can define an anchor map on the pair $(A,L)$ by setting $\rho_x(e_2)=\rho_y(e_2)=0,~\rho_z(e_2)=\gamma e_2,$ $\gamma\in \F$ being either zero, or satisfying $\gamma^{p-1}=1$. We therefore obtain two restricted Lie-Rinehart structures, that we denote by $\mathcal{L}_0$ if $\gamma=0$ and by $\mathcal{L}_{\gamma}$ if $\gamma^{p-1}=1$.

\bigskip

  Let us choose the restricted multiderivation $(m_1,\w_1)$ given by $m_1(x,y)=x,~m_1(x,z)=m_1(y,z)=0,~\w_1(x)=\w_1(z)=0,~\w_1(y)=z.$ It is not difficult to see that $\sigma_1\equiv 0$ is the only suitable symbol map related to $(m_1,\w_1)$. We obtain the infinitesimal deformation
  \begin{equation}
    m_t(x,y)=z+tx,~m_t(x,z)=m_t(y,z)=0,~~\w_t(x)=0,~ \w_t(y)=tz,~\w_t(z)=z.  
  \end{equation}

\begin{prop}
    The deformation $(m_t,\w_t)$ is a deformation in the sense of  Definition \ref{defidefop} if and only if $\gamma=0$. Otherwise, we have a weak deformation.
\end{prop}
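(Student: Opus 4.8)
The plan is to test the four conditions of Definition~\ref{defidefop} on the given first-order deformation $(m_t,\w_t)$, working modulo $t^2$, and to track which one is sensitive to the parameter $\gamma$. I would begin by fixing the symbol data: since the only symbol map attached to $(m_1,\w_1)$ is $\sigma_1\equiv 0$ and $\sigma_0=\rho$, the deformed symbol is $\sigma_t=\rho$, i.e.\ $\sigma_k=0$ for every $k\geq 1$. Conditions (\ref{jacomulti}) and (\ref{pmapmulti}) then hold modulo $t^2$: their $t^1$-coefficients say exactly that $(m_1,\w_1)\in Z^2_*(\h,z^*)$, which follows from Lemma~\ref{hrescocy} for the datum $m_1(x,y)=x$, $\w_1(y)=z$ (equivalently a direct check on the basis $\{x,y,z\}$, using $[x,z]=[y,z]=0$). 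Hence $(m_t,\w_t)$ is, for every $\gamma$, at least a weak deformation, and the whole proposition reduces to deciding when the two anchor conditions (\ref{condenplus1}) and (\ref{condenplus2}) are satisfied.

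I would next dispatch (\ref{condenplus2}). Because $\sigma_0=\rho$ and $\sigma_k=0$ for $k\geq 1$, every product $\sigma_{i_1}(x)\circ\cdots\circ\sigma_{i_{p-1}}(x)$ on its right-hand side vanishes unless all indices are zero; its $t^0$-coefficient is then the tautology $\rho(x)^{p-1}=\rho(x)^{p-1}$ and its $t^1$-coefficient reads $0=0$. Thus (\ref{condenplus2}) holds regardless of $\gamma$, and the whole question is concentrated in (\ref{condenplus1}).

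The decisive computation is (\ref{condenplus1}). Its $t^0$-coefficient is the original restricted Lie--Rinehart identity $\rho(\w(x))=\rho(x)^p$, which holds. In its $t^1$-coefficient the right-hand side vanishes (each summand carries a factor $\sigma_1=0$), while the left-hand side collapses to the requirement $\rho(\w_1(x))(a)=0$ for all $x\in L$ and $a\in A$. As $\w_1(x)=\w_1(z)=0$, this is automatic except at $x=y$, where $\w_1(y)=z$ and the requirement becomes $\rho_z(a)=0$ for all $a\in A$; since $\rho_z(e_1)=0$ (a derivation of a unital algebra kills the unit), this amounts to $\rho_z(e_2)=\gamma e_2=0$, i.e.\ $\gamma=0$. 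Therefore (\ref{condenplus1}) holds if and only if $\gamma=0$, which shows that $(m_t,\w_t)$ is a genuine deformation precisely in that case and a weak deformation when $\gamma^{p-1}=1$. I expect the only real subtlety to be organisational rather than computational: one must evaluate the anchor conditions on the non-unit element $e_2$, since the action of $A$ on $L$ is trivial and all the $\gamma$-dependence is stored in $\rho_z(e_2)$, so that testing on $e_1$ alone would make the deformation look unconditionally genuine.
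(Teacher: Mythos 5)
Your proposal is correct and follows essentially the same route as the paper: verify that the cocycle property of $(m_1,\w_1)$ gives conditions (\ref{jacomulti}) and (\ref{pmapmulti}) modulo $t^2$ (hence a weak deformation for any $\gamma$), observe that (\ref{condenplus2}) is automatic since $\sigma_1\equiv 0$, and reduce (\ref{condenplus1}) to $\rho(\w_1(u))=0$, i.e.\ $\rho_z(e_2)=\gamma e_2=0$. Your version is in fact slightly more careful than the paper's, which attributes the weak-deformation step to the \emph{non-triviality} of the class of $(m_1,\w_1)$ when what is actually used is that it is a cocycle.
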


\begin{proof}
The  class of $(m_1,\w_1)$ is non-trivial in $H_2^*(\h,\theta)$, therefore Equations (\ref{jacomulti}) and (\ref{pmapmulti}) hold, which means we have a weak deformation. Equations (\ref{condenplus1}) become
\begin{align*}
    \rho\left(u^{[p]}\right)(a)&=\rho^p(u)(a),~~u\in \h,~a\in A,\\
    \rho(\w_1(u))(a)&=\sum_{k=0}^p\rho(u)^k\circ\sigma_1(u)\circ\rho(u)^{p-k}(a)=0.
\end{align*}  
The first equation is always true and the second is true if and only if $\gamma=0$.
 Equations (\ref{condenplus2}) are trivial.
                
\end{proof}

Now, we consider the Lie-Rinehart algebra $\mathcal{L}_0$ with its infinitesimal deformation given by $(m_1,\w_1)$ and  compute the obstruction cochain. For all  $u,v,w\in \h$, we have
\begin{align}
    \obs^{(1)}(u,v,w)&=m_1(u,m_1(v,w))+m_1(v,m_1(w,u))+m_1(w,m_1(u,v)),\\
    \obs^{(2)}(u,v)&=-m_1\left(u,\w_1(v)\right).
\end{align}
It is straightforward to verify that both those maps vanish on the basis of $\h$. Therefore, we have to find $(m_2,\w_2)$ such that $d_*^2(m_2,\w_2)=(0,0)$ to extend the deformation, which is always possible by seeking into $Z_*^2(\h,z^*)$.

\section{Restricted Lie-Rinehart algebras in characteristic 2}
\subsection{Restricted Lie algebras in characteristic 2}

From now, $\F$ denotes a field of characteristic $2$. We provide in the sequel  a cohomology different from that of Fuchs and Evans (\cite{EF08}). However, both cohomologies seem to coincide in low degrees.

\subsubsection{Definitions}
	
In characteristic $2$,  Definition \ref{restdefi} of a restricted Lie algebra reduces to the following one:
	
\begin{defi}\label{restlie2}
	A \textbf{restricted Lie algebra} in characteristic $2$ is a Lie algebra $L$ endowed with a map $(\cdot)^{[2]}:L\longrightarrow L$ such that
	\begin{enumerate}
		\item $(\lambda x)^{[2]}=\lambda^{2}x^{[2]}$, $x\in L$, $\lambda\in \F$;
		\item $\left[x,y^{[2]} \right]=[[x,y],y],~x,y\in L;$
		\item $(x+y)^{[2]}=x^{[2]}+y^{[2]}+[x,y],~x,y\in L$.
	\end{enumerate}
	
\end{defi}

\begin{prop}
	Let $L$ be a restricted Lie algebra in characteristic $2$. 
    
\begin{itemize}
    \item Let $x_1,\cdots,x_n\in L$. Then we have the formula $$\left(\sum_{i=1}^{n}x_i\right)^{[2]}=\sum_{i=1}^{n}x_i^{[2]}+\sum_{1\leq i<j\leq n}[x_i,x_j]. $$
    \item Suppose that the adjoint representation on $L$ is faithful. Then, Conditions 1. and 3. of Definition \ref{restlie2} follow from Condition 2.
\end{itemize}

\end{prop}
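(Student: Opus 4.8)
The plan is to prove the two assertions independently: the summation formula by induction on $n$, and the ``faithfulness implies Conditions 1 and 3'' statement by the standard device of showing that a purported defect is central and hence zero.

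For the first assertion, I would induct on $n$. The base case $n=2$ is exactly Condition 3 of Definition \ref{restlie2}. For the inductive step, set $S=\sum_{i=1}^{n}x_i$ and apply Condition 3 to the pair $(S,x_{n+1})$, giving $(S+x_{n+1})^{[2]}=S^{[2]}+x_{n+1}^{[2]}+[S,x_{n+1}]$. Expanding $S^{[2]}$ by the induction hypothesis and using bilinearity to write $[S,x_{n+1}]=\sum_{i=1}^{n}[x_i,x_{n+1}]$, the terms regroup into $\sum_{i=1}^{n+1}x_i^{[2]}+\sum_{1\leq i<j\leq n+1}[x_i,x_j]$. This is purely formal and poses no difficulty.

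For the second assertion, the key observation is that $\ad$ being faithful means $\Ker(\ad)=Z(L)=0$; hence to prove an identity $u=v$ in $L$ it suffices to check that $[z,u]=[z,v]$ for every $z\in L$, i.e. that $u+v$ is central. I would apply this to each condition in turn. For Condition 1, fix $x$ and $\lambda$; applying Condition 2 to $(z,\lambda x)$ and then to $(z,x)$, together with bilinearity, yields
\[
[z,(\lambda x)^{[2]}]=[[z,\lambda x],\lambda x]=\lambda^2[[z,x],x]=\lambda^2[z,x^{[2]}]=[z,\lambda^2 x^{[2]}]
\]
for all $z$, so $(\lambda x)^{[2]}+\lambda^2 x^{[2]}$ is central and therefore zero. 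For Condition 3, fix $x,y$ and expand, using Condition 2 and bilinearity,
\[
[z,(x+y)^{[2]}]=[[z,x+y],x+y]=[[z,x],x]+[[z,x],y]+[[z,y],x]+[[z,y],y].
\]

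The hard part will be the middle identity $[[z,x],y]+[[z,y],x]=[z,[x,y]]$. I expect to obtain it from the symmetry of the bracket in characteristic $2$ (so that $[a,b]=[b,a]$): rewriting $[[z,y],x]=[[y,z],x]$ and $[z,[x,y]]=[[x,y],z]$, the identity becomes $[[z,x],y]+[[y,z],x]+[[x,y],z]=0$, which is precisely the cyclic Jacobi identity (all signs positive in characteristic $2$). Granting this, the four-term expansion above collapses, via Condition 2 on the outer terms, to $[z,x^{[2]}]+[z,[x,y]]+[z,y^{[2]}]=[z,x^{[2]}+y^{[2]}+[x,y]]$; thus $(x+y)^{[2]}+x^{[2]}+y^{[2]}+[x,y]$ is central and vanishes. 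The only genuine subtlety throughout is keeping the characteristic-$2$ symmetry of the bracket straight while manipulating the Jacobi identity; everything else is bookkeeping.
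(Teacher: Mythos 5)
Your proposal is correct and follows essentially the same route as the paper: the summation formula by iterating Condition 3 (the paper merely calls this a straightforward computation, and carries out the analogous induction explicitly a little later for the weighted version), and the faithfulness statement by showing via Condition 2 and the characteristic-$2$ Jacobi identity that $(\lambda x)^{[2]}+\lambda^2x^{[2]}$ and $(x+y)^{[2]}+x^{[2]}+y^{[2]}+[x,y]$ lie in $\Ker(\ad)=Z(L)=0$. No gaps.
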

	
\begin{proof} The first point follows from a straightforward computation. Let $x,y,z\in L$ and
suppose that the adjoint representation $\ad:x\mapsto \ad_x=[x,\cdot]$ is faithful. If $\lambda\in\F$, we have
$$\ad_{(\lambda x)^{[2]}}(y)=[(\lambda x)^{[2]},y]=[\lambda x,[\lambda x,y]]=\lambda^2[x,[x,y]]=\lambda^2[x^{[2]},y]=\lambda^2\ad_{x^{[2]}}(y).$$ Therefore, we have $(\lambda x)^{[2]}=\lambda^{2}x^{[2]}$. Then,
\begin{align*}
    \ad_{(x+y)^{[2]}}(z)&=[x+y,[x+y,z]]\\
                        &=[x,[x,z]]+[y,[y,z]]+[x,[y,z]]+[y,[x,z]]\\
                        &=[x^{[2]},z]+[y^{[2]},z]+[[x,y],z]\\
                        &=\ad_{x^{[2]}}(z)+\ad_{y^{[2]}}(z)+\ad_{[x,y]}(z).
\end{align*}
It follows that $(x+y)^{[2]}=x^{[2]}+y^{[2]}+[x,y]$.
\end{proof}

%
%
%

\subsubsection{$2$-mappings versus formal power series}

If $\left( L,[\cdot,\cdot],(\cdot)^{[2]}\right)$ is a restricted Lie algebra, it is straightforward to extend the Lie bracket on $L[[t]]$, with the formula
\begin{equation}\label{extbracket}
\left[\sum_{i\geq 0}t^ix_i,\sum_{j\geq 0}t^jx_j \right]=\sum_{i,j}t^{i+j}[x_i,y_j],~~ x_i,y_j\in L. 
\end{equation}

It is clear that $L[[t]]$ is endowed with a Lie algebra structure with this bracket. Now we aim to have a similar formula for the $[2]$-mapping: can we extend the map $(\cdot)^{[2]}$ on $L[[t]]$ in such a way that the latter space is endowed with a restricted Lie algebra structure with respect to the extended bracket?
Let $x_i\in L$ and $\lambda\in\F$. We then have
\begin{equation}\label{scalarformula}
\left(\sum_{i=0}^{n}\lambda^ix_i\right)^{[2]}=\sum_{i=0}^{n}\lambda^{2i}x_i^{[2]}+\sum_{0\leq i<j\leq n}\lambda^{i+j}[x_i,x_j].
\end{equation}

Equation (\ref{scalarformula}) can be proven easily by induction:
if $x_0,\cdots,x_{n+1}\in L$ and $\lambda\in \F$,
	\begin{itemize}
		\item [$\bullet$] $(x_0+\lambda x_1)^{[2]}=x_0^{[2]}+(\lambda x_1)^{[2]}+[x_0,\lambda x_1]=x_0^{[2]}+\lambda^2x_1^{[2]}+\lambda[x_0,x_1]$, so the assertion is true for $n=1$.
		\item [$\bullet$] If the formula is true for a $n\in \N$, we compute by induction:
		\begin{align*}
		\left(\sum_{i=0}^{n+1}\lambda^ix_i \right)^{[2]}&=\left(\sum_{i=0}^{n}\lambda^ix_i +\lambda^{n+1}x_{n+1} \right)^{[2]}\\
		&=\left(\sum_{i=0}^{n}\lambda^ix_i \right)^{[2]}+\lambda^{2(n+1)}x_{n+1}^{[2]}+\left[\sum_{i=0}^{n}\lambda^ix_i,\lambda^{n+1}x_{n+1} \right]\\
		&=\sum_{i=0}^{n}\lambda^{2i}x_i^{[2]}+\lambda^{2(n+1)}x_{n+1}^{[2]}+\sum_{0\leq i<j\leq n}\lambda^{i+j}[x_i,x_j]+\sum_{i=0}^{n}\lambda^{i+n+1}[x_i,x_{n+1}]\\
		&=\sum_{i=0}^{n+1}\lambda^{2i}x_i^{[2]}+\sum_{0\leq i<j\leq n+1}\lambda^{i+j}[x_i,x_j].\\
		\end{align*}
	\end{itemize}

Equation (\ref{scalarformula}) being true, we aim to use it to define a $2$-mapping on the formal power series with one parameter $t$.


\begin{prop}
If $L$ is a Lie algebra over  $\F$, then $L[[t]]$ is a restricted Lie algebra on $\F$ with the extended bracket (\ref{extbracket}) and the $2$-mapping given by 

\begin{equation}\label{extend2map}\left(\sum_{i\geq 0}t^ix_i \right)^{[2]_{t}}:=\sum_{i\geq 0}t^{2i}x_i^{[2]}+\sum_{i,j}t^{i+j}[x_i,x_j].    \end{equation}
\end{prop}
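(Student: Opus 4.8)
The plan is to verify directly the three axioms of Definition \ref{restlie2} for the map $(\cdot)^{[2]_t}$ defined by (\ref{extend2map}), where, as in (\ref{scalarformula}), the double sum is understood to run over the pairs $i<j$ (in characteristic $2$ the full ordered double sum would collapse, since $[x_i,x_j]+[x_j,x_i]=2[x_i,x_j]=0$). First I would record that the map is well defined on $L[[t]]$: for each fixed power $t^n$ only the finitely many pairs with $2i=n$ or $i+j=n$ contribute, so every coefficient is a finite sum in $L$. Throughout I would exploit that the extended bracket (\ref{extbracket}) is $\F[[t]]$-bilinear and that we work in characteristic $2$, so that $[a,b]=[b,a]$ and $2a=0$ for all $a,b\in L[[t]]$.

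For axiom 1, writing $X=\sum_i t^i x_i$, I substitute $\lambda x_i$ for $x_i$ and use $(\lambda x_i)^{[2]}=\lambda^2 x_i^{[2]}$ (axiom 1 on $L$) together with bilinearity of the bracket; every summand then picks up a factor $\lambda^2$, which gives $(\lambda X)^{[2]_t}=\lambda^2 X^{[2]_t}$. Axiom 3 is the most bookkeeping-heavy but still routine: for $X=\sum_i t^i x_i$ and $Y=\sum_j t^j y_j$ the coefficients of $X+Y$ are $x_k+y_k$, so I expand $(x_k+y_k)^{[2]}=x_k^{[2]}+y_k^{[2]}+[x_k,y_k]$ (axiom 3 on $L$) and $[x_k+y_k,x_l+y_l]$ by bilinearity. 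Collecting terms, the diagonal contributions $\sum_k t^{2k}[x_k,y_k]$ together with the cross terms $\sum_{k<l}t^{k+l}([x_k,y_l]+[y_k,x_l])$ must reproduce $[X,Y]=\sum_{i,j}t^{i+j}[x_i,y_j]$; splitting the latter according to $i=j$, $i<j$, $i>j$ and using $[y_k,x_l]=[x_l,y_k]$ (characteristic $2$) makes the two sides coincide.

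The hard part will be axiom 2, namely $[X,Y^{[2]_t}]=[[X,Y],Y]$, because here $Y$ enters both the $2$-map and the bracket, so the cross terms no longer match termwise. Expanding the left side via (\ref{extend2map}) yields $\sum_{i,j}t^{i+2j}[x_i,y_j^{[2]}]+\sum_i\sum_{j<k}t^{i+j+k}[x_i,[y_j,y_k]]$, and I would rewrite $[x_i,y_j^{[2]}]=[[x_i,y_j],y_j]$ using axiom 2 on $L$. Expanding the right side gives $\sum_{i,j,k}t^{i+j+k}[[x_i,y_j],y_k]$; isolating $j=k$ recovers the diagonal terms, while the $j\neq k$ part pairs up into $\sum_i\sum_{j<k}t^{i+j+k}([[x_i,y_j],y_k]+[[x_i,y_k],y_j])$. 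The whole identity thus reduces to $[x_i,[y_j,y_k]]=[[x_i,y_j],y_k]+[[x_i,y_k],y_j]$, which is precisely the Jacobi identity on $L$ once $[y_j,[x_i,y_k]]$ is converted to $[[x_i,y_k],y_j]$ using characteristic $2$. Assembling the three verified axioms then establishes that $(L[[t]],[\cdot,\cdot],(\cdot)^{[2]_t})$ is a restricted Lie algebra over $\F$.
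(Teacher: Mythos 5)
Your proposal is correct and follows essentially the same route as the paper: a direct verification of the three axioms of Definition \ref{restlie2}, using $p$-homogeneity of $(\cdot)^{[2]}$ for axiom 1, the Jacobi identity together with the characteristic-$2$ symmetry $[a,b]=[b,a]$ to merge the $j<k$, $j>k$ and $j=k$ contributions for axiom 2, and the re-indexing of the cross terms $\sum_{i\neq j}t^{i+j}[x_i,y_j]$ for axiom 3. Your remark that the double sum in (\ref{extend2map}) must be read as running over $i<j$ (otherwise it collapses in characteristic $2$) matches the convention the paper actually uses in its own computation.
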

\begin{proof}

We will check the three conditions of  Definition \ref{restlie2}. Let $\lambda\in \F$ and $x_i\in L$.	

\begin{enumerate}	    
\item	    \begin{align*}
	        \left(\lambda\sum_it^ix_i\right)^{[2]_{t}}&=\left(\sum_it^i(\lambda x_i)\right)^{[2]_{t}}\\
	        &=\sum_it^{2i}(\lambda x_i)^{[2]}+\sum_{i<j}t^{i+j}[\lambda x_i,\lambda x_j]\\
	        &=\lambda^2\sum_it^{2i}x_i^{[2]}+\lambda^2\sum_{i<j}t^{i+j}[x_i,x_j]\\
	        &=\lambda^2\left(\sum_it^i x_i\right)^{[2]_{t}}.
	    \end{align*}
	    
\item \begin{align*}
    \left[\sum_it^ix_i,\left(\sum_jt^j y_j\right)^{[2]_t} \right]&=\left[\sum_it^ix_i,\sum_jt^{2j}y_j^{[2]}\right]+\left[\sum_it^ix_i,\sum_{j<k}t^{j+k}[y_j,y_k]\right]\\
    &=\sum_{i,j}t^{i+2j}\left[x_i,y_j^{[2]}\right]+\sum_{\underset{j<k}{i,j,k}}t^{i+j+k}[x_i,[y_j,y_k]]\\
    &=\sum_{i,j}t^{i+2j}\left[x_i,y_j^{[2]}\right]+\sum_{\underset{j<k}{i,j,k}}t^{i+j+k}[y_j,[y_k,x_i]]+\sum_{\underset{j<k}{i,j,k}}t^{i+j+k}[y_k,[x_i,y_j]]\\
    &=\sum_{i,j}t^{i+2j}\left[x_i,y_j^{[2]}\right]+\sum_{\underset{j<k}{i,j,k}}t^{i+j+k}[[x_i,y_k],y_j]+\sum_{\underset{j<k}{i,j,k}}t^{i+j+k}[[x_i,y_j],y_k]\\
    &=\sum_{i,j}t^{i+2j}\left[x_i,y_j^{[2]}\right]+\sum_{\underset{j>k}{i,j,k}}t^{i+j+k}[[x_i,y_j],y_k]+\sum_{\underset{j<k}{i,j,k}}t^{i+j+k}[[x_i,y_j],y_k]\\
    &=\sum_{i,j}t^{i+j+j}[[x_i,y_j],y_j]+\sum_{\underset{j\neq k}{i,j,k}}t^{i+j+k}[[x_i,y_j],y_k]\\
    &=\sum_{i,j,k}t^{i+j+k}[[x_i,y_j],y_k]\\
    &=\left[\left[\sum_it^ix_i,\sum_jt^jy_j\right],\sum_jt^jy_j\right].
\end{align*}

\item The following computation will be useful.

\begin{align}
\nonumber\sum_{i\neq j}t^{i+j}[x_i,y_j]&=\sum_{i<j}t^{i+j}[x_i,y_j]+\sum_{j<i}t^{i+j}[x_i,y_j]\\
\nonumber&=\sum_{i<j}t^{i+j}[x_i,y_j]+\sum_{i<j}t^{i+j}[x_j,y_i]\\
&=\sum_{i<j}t^{i+j}[x_i,y_j]+\sum_{i<j}t^{i+j}[y_i,x_j].\label{eqqq}
\end{align}
Now we can prove the third condition.

\begin{align*}
    \left(\sum_it^ix_i+\sum_j t^j y_j\right)^{[2]_{t}}&=\left(\sum_it^i(x_i+y_i)\right)^{[2]_{t}}\\
    &=\sum_it^{2i}(x_i+y_i)^{[2]}+\sum_{i<j}t^{i+j}[x_i+y_i,x_j+y_j]\\
    &=\sum_it^{2i}x_i^{[2]}+\sum_it^{2i}y_i^{[2]}+\sum_it^{2i}[x_i,y_i]\\
    &~~+\sum_{i<j}t^{i+j}[x_i,x_j]+\sum_{i<j}t^{i+j}[x_i,y_j]\\
    &~~+\sum_{i<j}t^{i+j}[x_j,y_i]+\sum_{i<j}t^{i+j}[x_j,y_j]\\
    &=\sum_it^{2i}x_i^{[2]}+\sum_it^{2i}y_i^{[2]}+\sum_it^{2i}[x_i,y_i]\\
    &~~+\sum_{i<j}t^{i+j}[x_i,x_j]+\sum_{i<j}t^{i+j}[x_j,y_j]\\
    &~~+\sum_{i\neq j}t^{i+j}[x_i,y_j]~~\text{~~~~(with Equation (\ref{eqqq}))}\\
    &=\left(\sum_it^ix_i\right)^{[2]_{t}}+\left(\sum_it^iy_i\right)^{[2]_{t}}+\sum_{i,j}t^{i+j}[x_i,y_j]\\
    &=\left(\sum_it^ix_i\right)^{[2]_{t}}+\left(\sum_it^iy_i\right)^{[2]_{t}}+\left[\sum_it^ix_i,\sum_j t^j y_j\right].\\
\end{align*}
\end{enumerate}		
\end{proof}

\begin{rmq} By expanding the formula (\ref{extend2map}) and by arranging the terms by monomials of the same degree, we obtain
	\begin{equation}
	\left(\sum_{n\geq 0}t^nx_n \right)^{[2]_{t}}=\sum_{n\geq 0}t^n\left( (n+1)x^{[2]}_{\lfloor\frac{n}{2}\rfloor}+\sum_{\underset{i+j=n}{i<j}}[x_i,x_j] \right), 
	\end{equation}
	where $\lfloor\cdot\rfloor$ denotes the floor function.
\end{rmq}


\subsubsection{Cohomology of restricted Lie algebras in characteristic $2$}
Let $M$ be a restricted $L$-module. We start by setting $C_{*_2}^0(L,M)=C_{CE}^0 (L,M)$ and $C_{*_2}^1(L,M)=C_{CE}^1 (L,M)$.

\begin{defi}
	Let $n\geq2$ and $\varphi\in C_{CE}^n (L,M)$ and $\omega:L^{n-1}\longrightarrow M$. The pair $(\varphi,\omega)$ is a $n$-cochain of the restricted cohomology if	
	\begin{align}
	\omega(\lambda x, z_2,\cdots,z_{n-1})&=\lambda^2\omega(x,z_2,\cdots,z_{n-1})\\
	\omega(x,z_2,\cdots,\lambda z_i+z_i',\cdots,z_{n-1})&=\lambda\omega(x,z_2,\cdots,z_i,\cdots,z_{n-1})+\omega(x,z_2,\cdots,z_i',\cdots,z_{n-1})\\
	\omega(x+y,z_2,\cdots,z_{n-1})&=\omega(x,z_2,\cdots,z_{n-1})+\omega(y,z_2,\cdots,z_{n-1})+\varphi(x,y,z_2,\cdots,z_{n-1}).
	\end{align}
We denote the space of $n$-cochains of $L$ with values in $M$ by $C_{*_2}^n(L,M).$	
\end{defi}
We aim to construct coboundary maps 
$d^n_{*_2}:C^n_{*_2}(L,M)\rightarrow C^{n+1}_{*_2}(L,M)$. 
We set for 
$n\geq 2$,  $d^n_{*_2}(\varphi,\omega)=\left(d^n_{CE}(\varphi),\delta^n(\omega)\right),$ with
\begin{align*}
\delta^n\omega(x,z_2,\cdots,z_n)&=x\cdot\varphi(x,z_2,\cdots,z_n)\\
&+\sum_{i=2}^{n}z_i\cdot\omega(x,z_2,\cdots,\hat{z_i},\cdots,z_n)\\
&+\varphi(x^{[2]},z_2,\cdots,z_n)\\
&+\sum_{i=2}^{n}\varphi\left([x,z_i],x,z_2,\cdots,\hat{z_i},\cdots,z_n \right)\\
&+\sum_{1\leq i<j\leq n}\omega\left(x,[z_i,z_j],z_2,\cdots,\hat{z_i},\cdots,\hat{z_j},\cdots,z_n  \right).\\ 
\end{align*}

\begin{lem}
	Let $n\geq2$ and $(\varphi,\omega)\in C^n_{*_2}(L,M)$. Then $\left(d^n_{CE}(\varphi),\delta^n(\omega)\right)\in C^{n+1}_{*_2}(L,M)$. 
\end{lem}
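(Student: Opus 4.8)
The plan is to verify directly that $(d^n_{CE}\varphi,\delta^n\omega)$ meets the three requirements of an $(n+1)$-cochain: that $\delta^n\omega$ is $2$-homogeneous in its first argument, linear in each of the remaining $n-1$ arguments, and that its defect of additivity in the first argument is exactly $d^n_{CE}\varphi$. That $d^n_{CE}\varphi$ is itself a Chevalley--Eilenberg $(n+1)$-cochain is part of the classical theory, so all the work lies in the $\delta^n\omega$-component.

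First I would check $2$-homogeneity. Replacing $x$ by $\lambda x$ in each of the five terms defining $\delta^n\omega$ and using multilinearity of $\varphi$, the $2$-homogeneity of $\omega$ in its first slot, the identity $(\lambda x)^{[2]}=\lambda^2 x^{[2]}$, and bilinearity of the bracket, one sees that each term picks up a single factor $\lambda^2$; in the leading term $x\cdot\varphi(x,z_2,\dots,z_n)$ one factor $\lambda$ comes from the action and one from $\varphi$, which is precisely why the correct homogeneity degree is $2$. Linearity in each $z_k$ is then a routine term-by-term inspection: in every summand $z_k$ sits either in a multilinear slot of $\varphi$, in a non-first (hence linear) slot of $\omega$, inside a bracket occupying such a slot, or in the module-action factor, and in each case the dependence on $z_k$ is linear.

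The core of the argument is the additivity relation $\delta^n\omega(x+y,z_2,\dots,z_n)=\delta^n\omega(x,\dots)+\delta^n\omega(y,\dots)+d^n_{CE}\varphi(x,y,z_2,\dots,z_n)$. I would expand the left-hand side term by term, applying linearity of $\varphi$ in its first slot to $x\cdot\varphi(x,\dots)$, the cochain relation $\omega(x+y,\dots)=\omega(x,\dots)+\omega(y,\dots)+\varphi(x,y,\dots)$ to the two $\omega$-terms, the additive law $(x+y)^{[2]}=x^{[2]}+y^{[2]}+[x,y]$ to $\varphi((x+y)^{[2]},\dots)$, and bilinearity of the bracket to $\varphi([x+y,z_i],x+y,\dots)$. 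Subtracting $\delta^n\omega(x,\dots)+\delta^n\omega(y,\dots)$ leaves exactly the cross terms $x\cdot\varphi(y,\dots)+y\cdot\varphi(x,\dots)$, the mixed actions $\sum_i z_i\cdot\varphi(x,y,\dots,\hat{z_i},\dots)$, the term $\varphi([x,y],z_2,\dots,z_n)$, the mixed bilinear terms $\sum_i\bigl(\varphi([x,z_i],y,\dots)+\varphi([y,z_i],x,\dots)\bigr)$, and the mixed terms $\sum_{i<j}\varphi(x,y,[z_i,z_j],\dots)$. These must be matched against $d^n_{CE}\varphi(x,y,z_2,\dots,z_n)$, splitting its bracket sum and its action sum according to whether the running indices land on the slots $x$, $y$, or the $z$'s; the resulting groups correspond one-to-one with the cross terms above.

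The step I expect to be the main obstacle is precisely this matching, because the arguments of $\varphi$ appear in different orders on the two sides, and the signs $(-1)^{i+j-1}$ and $(-1)^i$ present in $d^n_{CE}$ have no visible counterpart in $\delta^n\omega$. Both difficulties dissolve under the standing hypothesis that $\F$ has characteristic $2$: all signs equal $1$, and the skew-symmetry of $\varphi$ becomes invariance under arbitrary permutations of its arguments (the exterior power here is the quotient of $T^\bullet L$ by $v\otimes w+w\otimes v$, which in characteristic $2$ is the symmetric relation). With reorderings free of charge, the correspondence is exact, all three cochain conditions hold, and therefore $(d^n_{CE}\varphi,\delta^n\omega)\in C^{n+1}_{*_2}(L,M)$.
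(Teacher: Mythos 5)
Your proposal is correct and follows essentially the same route as the paper: the paper likewise treats the homogeneity and linearity conditions as routine and devotes the proof to the additivity relation, expanding $\delta^n\omega(x+y,z_2,\dots,z_n)$ term by term via the cochain relation for $\omega$, the identity $(x+y)^{[2]}=x^{[2]}+y^{[2]}+[x,y]$, and multilinearity, then matching the cross terms against $d^n_{CE}\varphi(x,y,z_2,\dots,z_n)$ exactly as you describe (with all signs trivial in characteristic $2$).
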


\begin{proof}
	The only difficult issue is to show that \begin{equation}\label{omega*ppty}\delta^n\omega(x+y,z_2,\cdots,z_{n-1})=\delta^n\omega(x,z_2,\cdots,z_{n-1})+\delta^n\omega(y,z_2,\cdots,z_{n-1})+d_{CE}^n\varphi(x,y,z_2,\cdots,z_{n-1}),	 \end{equation} for all  $x,y,z_2,\cdots,z_{n+1}\in L$. We compute:
\begin{align*}
\delta^n\omega(x+y,z_2,\cdots,z_n)&=x\cdot\varphi(x+y,z_2,\cdots,z_n)
+\sum_{i=2}^{n}z_i\cdot\omega(x+y,z_2,\cdots,\hat{z_i},\cdots,z_n)\\
&~~+\varphi((x+y)^{[2]},z_2,\cdots,z_n)
+\sum_{i=2}^{n}\varphi\left([x+y,z_i],x+y,z_2,\cdots,\hat{z_i},\cdots,z_n \right)\\
&~~+\sum_{1\leq i<j\leq n}\omega\left(x+y,[z_i,z_j],z_2,\cdots,\hat{z_i},\cdots,\hat{z_j},\cdots,z_n  \right).\\ 
&=~x\cdot\varphi(x,z_2,\cdots,z_n)+x\cdot\varphi(y,z_2,\cdots,z_n)+y\cdot\varphi(x,z_2,\cdots,z_n)+y\cdot\varphi(y,z_2,\cdots,z_n)\\
&~~+\sum_{i=2}^{n}z_i\cdot\omega(x,z_2,\cdots,\hat{z_i},\cdots,z_n)+\sum_{i=2}^{n}z_i\cdot\omega(y,z_2,\cdots,\hat{z_i},\cdots,z_n)\\
&~~+\sum_{i=1}^{n}\varphi(x,y,z_2,\cdots,\hat{z_i},\cdots,z_n)\\
&~~+\varphi(x^{[2]},z_2,\cdots,z_n)+\varphi(y^{[2]},z_2,\cdots,z_n)+\varphi([x,y],z_2,\cdots,z_n)\\
&~~+\sum_{i=2}^{n}\varphi\left([x,z_i],x,z_2,\cdots,\hat{z_i},\cdots,z_n \right)+\sum_{i=2}^{n}\varphi\left([x,z_i],y,z_2,\cdots,\hat{z_i},\cdots,z_n \right)\\
&~~+\sum_{i=2}^{n}\varphi\left([y,z_i],y,z_2,\cdots,\hat{z_i},\cdots,z_n \right)+\sum_{i=2}^{n}\varphi\left([y,z_i],x,z_2,\cdots,\hat{z_i},\cdots,z_n \right)\\
&~~+\sum_{2\leq i<j\leq n}\omega\left(x,[z_i,z_j],z_2,\cdots,\hat{z_i},\cdots,\hat{z_j},\cdots,z_n \right)\\
&~~+\sum_{2\leq i<j\leq n}\omega\left(y,[z_i,z_j],z_2,\cdots,\hat{z_i},\cdots,\hat{z_j},\cdots,z_n \right)\\
&~~+\sum_{2\leq i<j\leq n}\varphi(x,z_i,z_j,z_2,\cdots,\hat{z_i},\cdots,\hat{z_j},\cdots,z_n).
\end{align*}
We can now identify the desired terms in the above development:

\begin{align*}	
	\delta^n\omega(x,z_2,\cdots,z_n)&=x\cdot\varphi(x,z_2,\cdots,z_n)+\sum_{i=2}^{n}z_i\cdot\omega(x,z_2,\cdots,\hat{z_i},\cdots,z_n)+\varphi(x^{[2]},z_2,\cdots,z_n)\\
	&+\sum_{i=2}^{n}\varphi\left([x,z_i],x,z_2,\cdots,\hat{z_i},\cdots,z_n \right)+\sum_{2\leq i<j\leq n}\omega\left(x,[z_i,z_j],z_2,\cdots,\hat{z_i},\cdots,\hat{z_j},\cdots,z_n \right);\\
\delta^n\omega(y,z_2,\cdots,z_n)&=y\cdot\varphi(y,z_2,\cdots,z_n)+\sum_{i=2}^{n}z_i\cdot\omega(y,z_2,\cdots,\hat{z_i},\cdots,z_n)+\varphi(y^{[2]},z_2,\cdots,z_n)\\
&+\sum_{i=2}^{n}\varphi\left([y,z_i],y,z_2,\cdots,\hat{z_i},\cdots,z_n \right)+\sum_{2\leq i<j\leq n}\omega\left(y,[z_i,z_j],z_2,\cdots,\hat{z_i},\cdots,\hat{z_j},\cdots,z_n \right);\\
d^n_{CE}\varphi(x,y,z_2,\cdots,z_n)
&=x\cdot\varphi(y,z_2,\cdots,z_n)+y\cdot\varphi(x,z_2,\cdots,z_n)+\sum_{i=1}^{n}\varphi(x,y,z_2,\cdots,\hat{z_i},\cdots,z_n)\\
&+\sum_{i=1}^{n}\varphi([x,y],z_2,\cdots,z_n)+\sum_{i=2}^{n}\varphi\left([x,z_i],y,z_2,\cdots,\hat{z_i},\cdots,z_n \right)\\
&+\sum_{i=2}^{n}\varphi\left([y,z_i],x,z_2,\cdots,\hat{z_i},\cdots,z_n \right)+\sum_{2\leq i<j\leq n}\varphi(x,z_i,z_j,z_2,\cdots,\hat{z_i},\cdots,\hat{z_j},\cdots,z_n).\\
\end{align*}
Equation (\ref{omega*ppty}) is then satisfied.
\end{proof}

\begin{lem}
	With the previous data, we have $\delta^{n+1}\circ\delta^n=0$.
\end{lem}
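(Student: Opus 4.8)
The plan is to verify the identity by direct expansion, organized so that the cancellations fall into a small number of structural families. Throughout, recall that by the previous lemma the pair $(\psi,\tau):=(d^n_{CE}\varphi,\delta^n\omega)$ is again a cochain, so $\delta^{n+1}$ is defined on it with $\psi$ playing the role of $\varphi$; thus $\delta^{n+1}\circ\delta^n$ is to be read as $\delta^{n+1}$ evaluated on $(\psi,\tau)$. First I would substitute the formulas for $\psi=d^n_{CE}\varphi$ and $\tau=\delta^n\omega$ into the five summands defining $\delta^{n+1}\tau(x,z_2,\dots,z_{n+1})$ and expand everything. Since $\mathrm{char}\,\F=2$ all signs are $+$, which removes the usual sign-chasing, and because the $2$-semilinearity and additivity of the output are already guaranteed by the cochain property (Equation (\ref{omega*ppty})), it suffices to evaluate on a fixed tuple and show the total is $0$.

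Next I would sort the resulting terms by structural type and cancel each family separately. The terms that involve only $\omega$ together with the ``passive'' entries $z_2,\dots,z_{n+1}$ (namely the iterated actions $z_i\cdot z_j\cdot\omega$, the mixed $z_i\cdot\omega(x,[z_k,z_l],\dots)$, and the double-bracket contributions $\omega(x,[z_i,z_j],\dots)$) assemble into the square of the Chevalley--Eilenberg differential applied to the parameterized cochain $\omega(x,-)$, and hence vanish by the Jacobi identity together with the module relation $[z_i,z_j]\cdot m=z_i\cdot(z_j\cdot m)+z_j\cdot(z_i\cdot m)$. The terms carrying a single $\varphi$ but no $[2]$-map cancel by exactly the mechanism that yields $d^{n+1}_{CE}d^n_{CE}\varphi=0$: the double brackets $[x,[z_i,z_j]]$ and $[[x,z_i],z_j]$ are eliminated by Jacobi and antisymmetry, so no restricted input is needed here.

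The genuinely restricted cancellations come from the remaining two families. The doubled action $x\cdot\bigl(x\cdot\varphi(z_2,\dots,z_{n+1})\bigr)$, produced when $x$ is the deleted argument inside $x\cdot\psi(x,\dots)$, is matched by the term $x^{[2]}\cdot\varphi(z_2,\dots,z_{n+1})$ coming from $\psi(x^{[2]},z_2,\dots,z_{n+1})$; these cancel by the restricted-module axiom $x^{[2]}\cdot m=x\cdot(x\cdot m)$. Likewise, the bracket terms $\varphi([x^{[2]},z_i],z_2,\dots,\hat{z_i},\dots)$ arising from $\psi(x^{[2]},\dots)$ are matched by the bracket-of-bracket terms $\varphi([[x,z_i],x],z_2,\dots,\hat{z_i},\dots)$ arising from $\psi([x,z_i],x,\dots)$; these cancel because $[x^{[2]},z_i]=[[x,z_i],x]$ in characteristic $2$, which is precisely Condition 2 of Definition \ref{restlie2}. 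The analogous pairing disposes of the $x^{[2]}$ occurrences propagated through $z_i\cdot\tau$ and through $\tau(x,[z_i,z_j],\dots)$.

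The main obstacle is not any single identity but the bookkeeping: one must confirm that every occurrence of a doubled module action, of $x^{[2]}$, and of an iterated bracket is produced by exactly one of the five blocks and is annihilated by exactly one partner, with no leftover. Invoking the already-established cochain property (Equation (\ref{omega*ppty})) to dispose of the semilinearity checks, and observing that the purely Chevalley--Eilenberg families reduce to the known $d_{CE}^2=0$, isolates the crux to the two restricted families above. The one place where an ordinary CE cancellation is insufficient is the matching of $\varphi([x^{[2]},z_i],\dots)$ with $\varphi([[x,z_i],x],\dots)$, so the restricted Lie axiom is exactly what forces the complex to square to zero.
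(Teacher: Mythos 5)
Your proposal is correct and follows essentially the same route as the paper: a direct expansion of $\delta^{n+1}$ applied to $\left(d^n_{CE}\varphi,\delta^n\omega\right)$ followed by pairwise cancellation of all terms. The paper simply lists every term of the expansion and asserts the sum vanishes, whereas you make explicit which axiom kills which family — in particular that the only genuinely restricted cancellations are $x\cdot\left(x\cdot\varphi(z_2,\dots,z_{n+1})\right)$ against $\varphi$ evaluated at $x^{[2]}$ via the restricted module axiom, and $\varphi\left([x^{[2]},z_i],\dots\right)$ against $\varphi\left([[x,z_i],x],\dots\right)$ via Condition 2 of Definition \ref{restlie2} — which is a useful clarification but not a different proof.
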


\begin{proof}
\begin{align*}
    \delta^{n+1}\circ\delta^n\w(x,z_2,\cdots,z_{n+1})&=x\cdot d_{CE}^n\varphi(x,z_2,\cdots,z_{n+1})\\
    &~~+\sum_{i=2}^{n+1}z_i\cdot\delta^n\w(x,z_2,\cdots,\hat{z_i},\cdots,z_{n+1})\\
    &~~+d_{CE}^n\varphi(x^{[2]},z_2,\cdots,z_{n+1}\\
    &~~+\sum_{i=2}^{n+1}d_{CE}^n\varphi([x,z_i],x,z_2,\cdots,\hat{z_i},\cdots,z_{n+1})\\
    &~~+\sum_{2\leq i<j\leq n}\delta^n\w(x,[z_i,z_j],z_2,\cdots,\hat{z_i},\cdots,\hat{z_j},\cdots,z_{n+1})\\
    &=\sum_{i=2}^{n+1}z_i\cdot\left( x\cdot\varphi(x,z_2,\cdots,\hat{z_i},\cdots,z_{n+1})\right)\\
    &~~+\sum_{i=2}^{n+1} z_i\cdot\sum_{\underset{j\neq i}{j=2}}z_j\cdot\w(x,z_2,\cdots,\hat{z_i},\cdots\hat{z_j},\cdots,z_{n+1})\\
    &~~+\sum_{i=2}^{n+1}z_i\cdot\varphi(x^{[2]},z_2,\cdots,\hat{z_i},\cdots,z_{n+1})\\
    &~~+\sum_{i=2}^{n+1}z_i\cdot\sum_{\underset{j\neq i}{j=2}}^{n+1}\varphi([x,z_j],x,z_2,\cdots,\hat{z_i},\cdots,\hat{z_j},\cdots,z_{n+1})\\
    &~~+\sum_{i=2}^{n+1}z_i\cdot\sum_{\underset{j,k\neq i}{2\leq j<k\leq n+1}}\w(x,[z_j,z_k],z_2,\cdots,\hat{z_i},\cdots,\hat{z_j},\cdots,\hat{z_k}\cdots,z_{n+1})\\
    &~~+\sum_{i=2}^{n+1}z_i\cdot\varphi(x^{[2]},z_2,\cdots,z_{n+1})+x^{[2]}\cdot\varphi(z_2,\cdots,z_{n+1})\\
    &~~+\sum_{2\leq i<j\leq n+1}\varphi([z_i,z_j],x^{[2]},\cdots,\hat{z_i},\cdots,\hat{z_j},\cdots,z_{n+1})\\
    &~~+\sum_{j=2}^{n+1}\varphi([x^{[2]},z_j],z_2,\cdots,\hat{z_j},...,z_{n+1})\\
    &~~+x\cdot\sum_{i=2}^{n+1}z_i\cdot\varphi(x,z_2,\cdots,\hat{z_i},\cdots,z_{n+1})+x\cdot(x\cdot\varphi(z_2,\cdots,z_{n+1}))\\
    &~~+x\cdot\sum_{2\leq i<j\leq n+1}\varphi([z_i,z_j],x,\cdots,\hat{z_i},\cdots,\hat{z_j},\cdots,z_{n+1})\\
    &~~+x\cdot\sum_{j=2}^{n+1}\varphi([x,z_j],z_2,\cdots,\hat{z_j},\cdots,z_{n+1})\\
    &~~+\sum_{i=2}^{n+1}\sum_{\underset{j\neq i}{j=2}}^{n+1}z_j\cdot\varphi([x,z_i],x,z_2,\cdots,\hat{z_i},\cdots,\hat{z_j},\cdots,z_{n+1})\\
    &~~+\sum_{i=2}^{n+1}x\cdot\varphi([x,z_i],z_2,\cdots,\hat{z_i},\cdots,z_{n+1})\\
    &~~+\sum_{i=2}^{n+1}[x,z_i]\cdot\varphi(x,z_2,\cdots,\hat{z_i},\cdots,z_{n+1})\\
    &~~+\sum_{i=2}^{n+1}\sum_{\underset{j,k\neq i}{2\leq j<k\leq n+1}}\varphi([z_j,z_k],[x,z_i],x,z_2,\cdots,\hat{z_i},\cdots,\hat{z_j},\cdots,\hat{z_k}\cdots,z_{n+1}))\\
    &~~+\sum_{i=2}^{n+1}\sum_{\underset{j\neq i}{j=2}}^{n+1}\varphi([x,z_j],[x,z_i],\cdots,\hat{z_i},\cdots,\hat{z_j},\cdots,z_{n+1})\\
    &~~+\sum_{i=2}^{n+1}\sum_{\underset{j\neq i}{j=2}}\varphi([[x,z_i],z_j],x,z_2,\cdots,\hat{z_i},\cdots,\hat{z_j},\cdots,z_{n+1})\\
    &~~+\sum_{i=2}^{n+1}\varphi([[x,z_i],x],z_2,\cdots,\hat{z_i},\cdots,z_{n+1})\\
    &~~+\sum_{2\leq i<j\leq n+1}x\cdot\varphi(x,[z_i,z_j],\cdots,\hat{z_i},\cdots,\hat{z_j},\cdots,z_{n+1})\\
    &~~+\sum_{2\leq i<j\leq n+1}\sum_{\underset{j,k\neq i}{k=2}}^{n+1}z_k\cdot\w(x,[z_i,z_j]),z_2,\cdots,\hat{z_i},\cdots,\hat{z_j},\cdots,\hat{z_k}\cdots,z_{n+1})\\
    &~~+\sum_{2\leq i<j\leq n+1}[z_i,z_j]\cdot\w(x,z_2\cdots,\hat{z_i},\cdots,\hat{z_j},\cdots,z_{n+1})\\
    &~~+\sum_{2\leq i<j\leq n+1}\varphi(x^{[2]},[z_i,z_j],z_2,\cdots\hat{z_i},\cdots,\hat{z_j},\cdots,z_{n+1})\\
    &~~+\sum_{2\leq i<j\leq n+1}\varphi([x,[z_i,z_j],x,z_2,\cdots\hat{z_i},\cdots,\hat{z_j},\cdots,z_{n+1})\\
    &~~+\sum_{2\leq i<j\leq n+1}\sum_{\underset{j,k\neq i}{k=2}}^{n+1}\varphi([x,z_k],x,[z_i,z_j],z_2,\cdots,\hat{z_i},\cdots,\hat{z_j},\cdots,\hat{z_k}\cdots,z_{n+1})\\
    &~~+\sum_{2\leq i<j\leq n+1}~~\sum_{\underset{k,l\neq i,j}{2\leq k<l\leq n+1}}\w(x,[z_k,z_l],z_2,\cdots,\hat{z_i},\cdots,\hat{z_j},\cdots,\hat{z_k}\cdots,\hat{z_l},\cdots,z_{n+1})\\
    &~~+\sum_{2\leq i<j\leq n+1}^{n+1}\sum_{\underset{k\neq i,j}{k=2}}^{n+1}\w(x,[[z_i,z_j],z_k],z_2,\cdots,\hat{z_i},\cdots,\hat{z_j},\cdots,\hat{z_k}\cdots,z_{n+1})\\
    &=0.
\end{align*}
\end{proof}
Thus, we have obtained a cochain complex $\left(C_{*_2}^n(L,M),d^n_{*_2} \right)_{n\geq 2}$. In particular, for $n\in\left\lbrace 0,1\right\rbrace$, we have $d^0_{*_2}=d^0_{CE}$ and 
\begin{align*}
d^1_{*_2}:C_{*_2}^1(L,M)&\longrightarrow C_{*_2}^2(L,M)\\
	\varphi&\longmapsto(d_{CE}^1\varphi,\omega),~\omega(x)=\varphi\left( x^{[2]} \right)+x\cdot\varphi(x),~x\in L.\\
\end{align*}

\begin{lem}
The map	$d^1_{*_2}$ is well-defined,  $d^1_{*_2}\circ d^0_{*_2}=0 \text{ and } d^2_{*_2}\circ d^1_{*_2}=(0,0).$
\end{lem}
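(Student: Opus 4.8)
The plan is to treat the three assertions separately, in each case peeling off the Chevalley--Eilenberg component (handled by the classical identities $d^1_{CE}\circ d^0_{CE}=0$ and $d^2_{CE}\circ d^1_{CE}=0$) and focusing the work on the restricted component.

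First I would verify that $d^1_{*_2}$ is well-defined, i.e.\ that for $\varphi\in C^1_{*_2}(L,M)=C^1_{CE}(L,M)$ the map $\omega(x)=\varphi(x^{[2]})+x\cdot\varphi(x)$ really is a $2$-cochain. The scaling condition $\omega(\lambda x)=\lambda^2\omega(x)$ is immediate from $(\lambda x)^{[2]}=\lambda^2 x^{[2]}$, the linearity of $\varphi$, and the action axioms. For additivity I would expand, using $(x+y)^{[2]}=x^{[2]}+y^{[2]}+[x,y]$ from Definition \ref{restlie2},
\begin{align*}
\omega(x+y)&=\varphi(x^{[2]})+\varphi(y^{[2]})+\varphi([x,y])+(x+y)\cdot\bigl(\varphi(x)+\varphi(y)\bigr)\\
&=\omega(x)+\omega(y)+\bigl(\varphi([x,y])+x\cdot\varphi(y)+y\cdot\varphi(x)\bigr),
\end{align*}
and recognize the parenthesized expression as $d^1_{CE}\varphi(x,y)$ (in characteristic $2$ all signs coincide). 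This is exactly the defining relation of a $2$-cochain.

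Next, for $d^1_{*_2}\circ d^0_{*_2}=0$, I would write $\varphi=d^0_{CE}m$ for $m\in M$, so $\varphi(x)=x\cdot m$ in characteristic $2$. The Chevalley--Eilenberg component vanishes by $d^1_{CE}\circ d^0_{CE}=0$, while the restricted component is $\omega(x)=\varphi(x^{[2]})+x\cdot\varphi(x)=x^{[2]}\cdot m+x\cdot(x\cdot m)$, which is zero by the restricted-module axiom $x^{[2]}\cdot m=x\cdot(x\cdot m)$.

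The main work, and the step I expect to be the real obstacle, is $d^2_{*_2}\circ d^1_{*_2}=0$, which amounts to showing $\delta^2\omega=0$ for $\omega(x)=\varphi(x^{[2]})+x\cdot\varphi(x)$ with associated CE $2$-cochain $\psi:=d^1_{CE}\varphi$. Specializing the definition of $\delta^n$ to $n=2$ (the final sum over $2\le i<j\le n$ being empty) yields
\[
\delta^2\omega(x,z)=x\cdot\psi(x,z)+z\cdot\omega(x)+\psi(x^{[2]},z)+\psi([x,z],x).
\]
I would then substitute $\psi(a,b)=a\cdot\varphi(b)+b\cdot\varphi(a)+\varphi([a,b])$ and $\omega(x)=\varphi(x^{[2]})+x\cdot\varphi(x)$ and simplify in characteristic $2$. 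The two terms equal to $x\cdot\varphi([x,z])$ cancel, as do the two equal to $z\cdot\varphi(x^{[2]})$; the restricted-module identity $x^{[2]}\cdot\varphi(z)=x\cdot(x\cdot\varphi(z))$ cancels the contribution from $\psi(x^{[2]},z)$ against $x\cdot\psi(x,z)$; and the module-action identity $[x,z]\cdot\varphi(x)=x\cdot(z\cdot\varphi(x))+z\cdot(x\cdot\varphi(x))$ disposes of the remaining action terms (three equal terms summing to $0$ in characteristic $2$). What survives is $\varphi([x^{[2]},z])+\varphi([[x,z],x])$, which vanishes because condition (2) of Definition \ref{restlie2} together with skew-symmetry in characteristic $2$ gives $[x^{[2]},z]=[z,x^{[2]}]=[[z,x],x]=[[x,z],x]$. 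Hence $\delta^2\omega=0$ and the composite is $(0,0)$. The delicate part here is purely the bookkeeping: tracking every commutation correctly in characteristic $2$ and invoking both the restricted-module axiom and the axiom $[x,y^{[2]}]=[[x,y],y]$ at precisely the right places.
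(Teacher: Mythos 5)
Your proof is correct. The paper in fact states this lemma with no proof at all, so there is nothing to compare against; your direct verification is the natural one, and every step checks out: well-definedness reduces to $(x+y)^{[2]}=x^{[2]}+y^{[2]}+[x,y]$ producing exactly the cochain relation with $\psi=d^1_{CE}\varphi$, the first composite dies by the restricted-module axiom, and your cancellation scheme for $\delta^2\omega$ (including the final use of $[x^{[2]},z]=[[x,z],x]$) is accurate. One cosmetic point: the three surviving action terms $x\cdot(z\cdot\varphi(x))$, $z\cdot(x\cdot\varphi(x))$ and $[x,z]\cdot\varphi(x)$ are not ``three equal terms''; rather, the module identity you quote shows their sum equals $2[x,z]\cdot\varphi(x)=0$, which is what your argument actually uses.
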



Our cochain complex is now complete.

\begin{thm}
	Let $L$ be a restricted Lie algebra and $M$ a restricted $L$-module. The complex $\left(C_{*_2}^n(L,M),d^n_{*_2} \right)_{n\geq 0}$ is a cochain complex. The \textbf{$n^{th}$ restricted cohomology group of the Lie algebra $L$ in characteristic 2} is defined by
	
	$$ H_{*_2}^n(L,M):=Z_{*_2}^n(L,M)/B^n_{*_2}(L,M),$$ 
	with $Z_{*_2}^n(L,M)=\Ker(d^n_{*_2})$ the restricted $n$-cocycles and $B_{*_2}^n(L,M)=\im(d^{n-1}_{*_2})$ the restricted $n$-coboundaries.
\end{thm}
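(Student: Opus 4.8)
The plan is to observe that this theorem is essentially a packaging of the lemmas already established in this subsection, so the proof amounts to assembling them and checking the degree boundaries. To say that $\left(C_{*_2}^n(L,M),d^n_{*_2}\right)_{n\geq 0}$ is a cochain complex requires two things: first, that each $d^n_{*_2}$ really maps $C^n_{*_2}(L,M)$ into $C^{n+1}_{*_2}(L,M)$; and second, that $d^{n+1}_{*_2}\circ d^n_{*_2}=0$ for every $n\geq 0$. Once both hold, the inclusion $B^n_{*_2}(L,M)\subseteq Z^n_{*_2}(L,M)$ follows immediately, so that the quotients $H^n_{*_2}(L,M)$ are well-defined.

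For well-definedness I would split by degree. In degree $0$ we have $d^0_{*_2}=d^0_{CE}$, which maps $C^0_{*_2}(L,M)=M$ into $C^1_{CE}(L,M)=C^1_{*_2}(L,M)$ by the usual Chevalley-Eilenberg formula. In degree $1$, the last lemma asserts precisely that $d^1_{*_2}$ is well-defined, i.e. that for $\varphi\in C^1_{*_2}(L,M)$ the pair consisting of $d^1_{CE}\varphi$ and the map $\omega(x)=\varphi(x^{[2]})+x\cdot\varphi(x)$ is a genuine element of $C^2_{*_2}(L,M)$. For $n\geq 2$, the first lemma of this subsection shows exactly that $\left(d^n_{CE}(\varphi),\delta^n(\omega)\right)\in C^{n+1}_{*_2}(L,M)$, which is well-definedness in that range.

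For the nilpotency $d^{n+1}_{*_2}\circ d^n_{*_2}=0$ I would again argue by degree. The low degrees $n=0$ and $n=1$ are covered directly by the last lemma, which records $d^1_{*_2}\circ d^0_{*_2}=0$ and $d^2_{*_2}\circ d^1_{*_2}=(0,0)$. For $n\geq 2$, the differential is defined componentwise as $d^n_{*_2}=\left(d^n_{CE},\delta^n\right)$, whence
\[
d^{n+1}_{*_2}\circ d^n_{*_2}(\varphi,\omega)=\left(d^{n+1}_{CE}\circ d^n_{CE}(\varphi),\; \delta^{n+1}\circ\delta^n(\omega)\right).
\]
The first component vanishes because the ordinary Chevalley-Eilenberg differential satisfies $d^{m+1}_{CE}\circ d^m_{CE}=0$, and the second vanishes by the identity $\delta^{n+1}\circ\delta^n=0$ established just above. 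Hence $d^{n+1}_{*_2}\circ d^n_{*_2}=0$ in every degree.

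The substantive work has already been done: the only genuine obstacle is the identity $\delta^{n+1}\circ\delta^n=0$, whose verification required the long term-by-term cancellation carried out in the preceding lemma. Granting that identity together with the two well-definedness lemmas, the theorem itself is purely formal; it merely records that these pieces fit together into a complex and names the resulting cohomology groups $H^n_{*_2}(L,M)=Z^n_{*_2}(L,M)/B^n_{*_2}(L,M)$.
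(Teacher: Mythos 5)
Your proposal is correct and matches the paper's approach: the paper offers no separate argument for this theorem, treating it (``Our cochain complex is now complete'') as the immediate assembly of the preceding lemmas on well-definedness of $d^n_{*_2}$ and the identities $d^1_{*_2}\circ d^0_{*_2}=0$, $d^2_{*_2}\circ d^1_{*_2}=(0,0)$ and $\delta^{n+1}\circ\delta^n=0$, exactly as you do. Your explicit componentwise reduction of $d^{n+1}_{*_2}\circ d^n_{*_2}$ to $d^{n+1}_{CE}\circ d^n_{CE}=0$ and $\delta^{n+1}\circ\delta^n=0$ is a faithful and slightly more careful rendering of what the paper leaves implicit.
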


\begin{rmq} 
$H^0_{*_2}(L,M)=H^0_{CE}(L,M)$.
\end{rmq}
\begin{rmq}
This cohomology has no analog in characteristic different from $2$. Very similar cohomology formulas have been considered in \cite{BM22}, in the slightly different context of Hom-Lie superalgebras (of characteristic $2$).
\end{rmq}

\subsubsection{Particular cases: first and second cohomology groups}

\paragraph{First cohomology group and restricted derivations.}

We recall that a restricted derivation $D$ of a restricted Lie algebra $L$ in characteristic $2$ is a derivation that satisfies $D(x^{[2]})=[x,D(x)]$ for all $x\in L$. Let $\varphi$ be a restricted $1$-cocycle, that reads, for all $x,y\in L$:

\begin{center}
$\begin{cases}
\varphi([x,y])=[x,\varphi(y)]+[x,\varphi(y)];\\ 
~~\varphi(x^{[2]})=[x,\varphi(x)].
\end{cases}$
\end{center}

At this point, it is clear that any $1$-cocyle $\varphi$ with values in $L$ is a restricted derivation and that the converse is true. A quick computation shows that
$$B^1_{*_2}(L,L)=B^1_{CE}(L,L)=\im(d^0_{CE})=\left\lbrace \ad_x,~x\in L \right\rbrace.$$
We have seen that every derivation of the form $\ad_x$ is restricted. Those derivations are called \textbf{inner derivations}. Therefore, we have 
$$ H^1_{*_2}(L,L)=Z^1_{*_2}(L,L)/B^1_{*_2}(L,L)=\left\lbrace\text{restricted derivations} \right\rbrace /\left\lbrace\text{inner derivations} \right\rbrace.$$
We recover a well-known result (\cite{ET00}).

\paragraph{Second scalar cohomology group and central extensions.} Let $\left(L,[\cdot,\cdot]_L,(\cdot)^{[2]_L}\right)$ be a restricted Lie algebra in characteristic $2$. We denote by $\g:=L\oplus \F c$, where $c$ is a given generator. We recall that $\F$ is a trivial $L$-module. A restricted scalar $2$-cocycle is a pair $(\varphi,\w)\in C^2_{*_2}(L,\F)$ satisfying

\begin{equation}
    \varphi(x,[y,z]_L)+\varphi(y,[z,x]_L)+\varphi(z,[x,y]_L)=0
\end{equation}
and
\begin{equation}\label{scalar2cocycle}
    \varphi(x,y^{[2]_L})=\varphi([x,y]_L,y). 
\end{equation}
Let $x,y\in L$ and $u,v\in \F$. We define a bracket on $\g$ by
\begin{equation}
    [x+uc,y+vc]_{\g}=[x,y]_L+\varphi(x,y)c
\end{equation}
and a map $(\cdot)^{[2]_{\g}}:\g\rightarrow\g$ by
\begin{equation}
    (x+uc)^{[2]_{\g}}=x^{[2]_L}+\w(x)c.
\end{equation}

\begin{prop} The tuple 
$\left(\g,[\cdot,\cdot]_{\g},(\cdot)^{[2]_{\g}}\right)$ defines a restricted Lie algebra if and only if $(\varphi,\w)$ is a restricted $2$-cocycle.
\end{prop}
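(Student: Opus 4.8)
The plan is to verify, one at a time, the three conditions of Definition \ref{restlie2} (together with the Lie algebra axioms) for the tuple $\left(\g,[\cdot,\cdot]_{\g},(\cdot)^{[2]_{\g}}\right)$, and to match each of them with a condition on $(\varphi,\w)$. The key structural observation is that $c$ is central and that both the bracket and the $2$-map split as a sum of an $L$-valued part — governed entirely by the restricted Lie algebra structure of $L$, and hence holding automatically — and an $\F c$-valued part, controlled by $\varphi$ and $\w$. Thus every verification reduces to comparing the coefficients of $c$.

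For the Lie part, skew-symmetry is immediate since $\varphi$ is alternating and $[x,x]_L=0$, so $[x+uc,x+uc]_{\g}=0$. For the Jacobi identity, I would expand $\sum_{\mathrm{cyc}}[x+uc,[y+vc,z+wc]_{\g}]_{\g}$: the $L$-component vanishes by the Jacobi identity in $L$, while the $c$-component is precisely $\big(\varphi(x,[y,z]_L)+\varphi(y,[z,x]_L)+\varphi(z,[x,y]_L)\big)c$. Hence Jacobi in $\g$ holds if and only if $d^2_{CE}\varphi=0$, the first defining condition of a restricted $2$-cocycle.

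For the restricted part, conditions 1 and 3 of Definition \ref{restlie2} reduce, after cancelling the $L$-components (using that $L$ is itself restricted), to the identities $\w(\lambda x)=\lambda^2\w(x)$ and $\w(x+y)=\w(x)+\w(y)+\varphi(x,y)$; these are exactly the two nontrivial conditions defining a $2$-cochain $(\varphi,\w)\in C^2_{*_2}(L,\F)$ (the middle, $\F$-linearity condition being vacuous here since $\w$ has a single argument). Condition 2 is the substantive one: expanding $[x+uc,(y+vc)^{[2]_{\g}}]_{\g}$ and $[[x+uc,y+vc]_{\g},y+vc]_{\g}$ and again comparing $c$-coefficients, the $L$-part gives $[x,y^{[2]_L}]_L=[[x,y]_L,y]_L$ (true in $L$) while the $c$-part gives $\varphi(x,y^{[2]_L})=\varphi([x,y]_L,y)$, which is precisely Equation (\ref{scalar2cocycle}).

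The one point requiring care — and the only real obstacle — is to recognize that Equation (\ref{scalar2cocycle}) coincides with the vanishing of the restricted differential $\delta^2\w$ in this scalar setting. Here I would use that $\F$ is the trivial $L$-module, so the two action terms $x\cdot\varphi(x,z_2)$ and $z_2\cdot\w(x)$ in the formula for $\delta^2\w$ drop out, leaving $\delta^2\w(x,z_2)=\varphi(x^{[2]},z_2)+\varphi([x,z_2],x)$; using the alternating property of $\varphi$ and $\operatorname{char}\F=2$, the equation $\delta^2\w=0$ is equivalent to (\ref{scalar2cocycle}). Assembling the four matchings then shows that $\g$ is a restricted Lie algebra precisely when $(\varphi,\w)$ is a $2$-cochain with $d^2_{*_2}(\varphi,\w)=(d^2_{CE}\varphi,\delta^2\w)=0$, i.e. a restricted $2$-cocycle, which is the claimed equivalence.
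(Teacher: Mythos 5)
Your proposal is correct and follows essentially the same route as the paper: both decompose the bracket and the $2$-map on $\g=L\oplus\F c$ into their $L$- and $c$-components, observe that the $L$-components hold because $L$ is already a restricted Lie algebra, and match the $c$-components of the three axioms of Definition \ref{restlie2} (and of the Jacobi identity) with the cochain conditions on $\w$ and the two cocycle identities, in particular Equation (\ref{scalar2cocycle}). Your additional remark that, for the trivial module $\F$, the condition $\delta^2\w=0$ reduces exactly to (\ref{scalar2cocycle}) is a correct and worthwhile sanity check that the paper leaves implicit in its definition of a restricted scalar $2$-cocycle.
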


\begin{proof}
It is well-known in the ordinary case that $\left(\g,[\cdot,\cdot]\right)$ is a Lie algebra if and only if $\varphi$ is a Chevalley-Eilenberg $2$-cocycle. It remains to show that $(\cdot)^{[2]_{\g}}$ is a $[2]$-mapping on $\g$ if and only if Equation (\ref{scalar2cocycle}) is satisfied. Let $x,y\in L$ and $u,v\in \F$.
\begin{align*}
    \left((x+u)+(y+v)\right)^{[2]_{\g}}&=(x+y)^{[2]_L}+\w(x+y)c\\
    &=x^{[2]_L}+[x,y]_L+\w(x)c+\w(y)c+\varphi(x,y)c\\
    &=(x+uc)^{[2]_{\g}}+(y+vc)^{[2]_{\g}}+[(x+uc),(y+vc)]_{\g}.
\end{align*}
\begin{align*}
    [(x+uc),(y+vc)^{[2]_{\g}}]_{\g}&=[(x+uc),y^{[2]_L}+\w(y)c]_{\g}\\
    &=\left[[x,y^{[2]_L}\right]_L+\varphi\left(x,y^{[2]_L}\right)c\\
    &=[[x,y]_L,y]_L+\varphi([x,y]_L,y)c\\
    &=[[x,y]_L+\varphi(x,y)c,y+vc]_{\g}\\
    &=[[x+uc,y+vc]_{\g},y+vc]_{\g}.
\end{align*}
Finally we have $(\lambda(x+uc))^{[2]_{\g}}=\lambda^2(x+uc)^{[2]_{\g}},$ we conclude that $(\cdot)^{[2]_{\g}}$ is a $[2]$-mapping on $\g$ if and only if Equation (\ref{scalar2cocycle}) is satisfied.

\end{proof}

\subsection{Deformations of restricted Lie-Rinehart algebras in characteristic 2}
In this section, we recall the definition of  restricted Lie-Rinehart algebras in characteristic 2 and discuss their characterization in terms of restricted multiderivations. Moreover, we study their formal deformations using the cohomology defined in the previous section.
\subsubsection{Restricted Lie-Rinehart algebras in characteristic 2}

\begin{defi}\label{RLR2}
 	Let $A$ be a  commutative associative algebra over a field $\F$ of characteristic $2$. Then $(A,L)$ is a \textbf{restricted Lie-Rinehart algebra} in characteristic 2 if
 	\begin{itemize}
 		\item $(A,L)$ is a Lie-Rinehart algebra, with anchor map $\rho:L\longrightarrow\Der(A)$;
 		\item $\left(L, (-)^{[2]}\right)$ is a restricted Lie algebra; 
 		\item $\rho(x^{[2]})=\rho(x)^2$ ($\rho$ is a restricted Lie morphism);
 		\item $(ax)^{[2]}=a^px^{[2]}+\rho(ax)(a)x,~a\in A,~x\in L$. (Hochschild condition)
 	\end{itemize}
 \end{defi}

\begin{defi}\label{2restmulti}
A \textbf{restricted multiderivation} (of order $1$) is a pair $(m,\w)$, where $m:L\times L\rightarrow L$ is skew-symmetric bilinear map  and $\w : L \rightarrow L$ is a $2$-homogeneous map satisfying
\begin{equation}
    \w(x+y)=\w(x)+\w(y)+m(x,y),
\end{equation}
such that it exists a map $\sigma:L\rightarrow \Der(A)$ called \textbf{restricted symbol map} which must satisfy the following four conditions, for all  $x,y\in L$ and $a\in A$:
\begin{equation}\label{2jesaispas1}
    \sigma(ax)=a\sigma(x);
\end{equation}
\begin{equation}\label{2jesaispas2}
    m(x,ay)=am(x,y)+\sigma(x)(a)y;
\end{equation}
\begin{equation}\label{2jesaispas3}
    \sigma\circ\w(x)=\sigma(x)^2;
\end{equation}
\begin{equation}\label{2jesaispas4}
    \w(ax)=a^2\w(x)+\sigma(ax)(a)x.
\end{equation}
\end{defi}


\begin{prop}
There is a one-to-one correspondence between  restricted Lie-Rinehart structures on the pair $(A,L)$ and restricted multiderivations $(m,\w )$ of order $1$ satisfying 
\begin{equation}\label{2jesaispas5}
    m(x,m(y,z))+m(y,m(z,x))+m(z,m(x,y))=0
\end{equation}
and
\begin{equation}\label{2jesaispas6}
    m(x,\w(y))=m(m(\cdots m(x,\overset{p\text{ terms}}{\overbrace{y),y),\cdots,y}})
\end{equation}
\end{prop}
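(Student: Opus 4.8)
The plan is to mirror the proof of Proposition \ref{ontoonep}, specialising the general characteristic-$p$ dictionary to $p=2$. Concretely, I would set up the correspondence through the identifications $m \leftrightarrow [\cdot,\cdot]$, $\omega \leftrightarrow (\cdot)^{[2]}$ and $\sigma \leftrightarrow \rho$, and check that under this dictionary every axiom on one side matches exactly one axiom on the other. The only genuine input beyond bookkeeping is the ordinary (non-restricted) half of the correspondence, which I would import from Proposition 4.3 of \cite{EM22} (see also \cite{MM20}): it already establishes that a skew-symmetric bilinear $m$ admitting a symbol map $\sigma$ with (\ref{2jesaispas1})--(\ref{2jesaispas2}) and satisfying the Jacobi identity (\ref{2jesaispas5}) is the same datum as a Lie-Rinehart structure on $(A,L)$ with anchor $\rho=\sigma$. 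In particular this result already contains the one non-formal point, namely that the Leibniz rule (\ref{2jesaispas2}) together with (\ref{2jesaispas5}) forces $\sigma$ to be a Lie-algebra morphism $L \to \Der(A)$.

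For the forward direction I would start from a restricted Lie-Rinehart algebra $(A,L,[\cdot,\cdot],(\cdot)^{[2]},\rho)$ as in Definition \ref{RLR2} and put $m:=[\cdot,\cdot]$, $\omega:=(\cdot)^{[2]}$, $\sigma:=\rho$. Skew-symmetry and bilinearity of $m$ are immediate, and the defining properties of $\omega$ in Definition \ref{2restmulti} are exactly the clauses of Definition \ref{restlie2}: $2$-homogeneity is clause 1, and $\omega(x+y)=\omega(x)+\omega(y)+m(x,y)$ is clause 3. The four symbol conditions transcribe directly: (\ref{2jesaispas1}) is $A$-linearity of $\rho$, (\ref{2jesaispas2}) is the Leibniz condition, (\ref{2jesaispas3}) is the restricted-morphism condition $\rho(x^{[2]})=\rho(x)^2$, and (\ref{2jesaispas4}) is the Hochschild condition. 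Finally (\ref{2jesaispas5}) is the Jacobi identity and (\ref{2jesaispas6}), which for $p=2$ reads $m(x,\omega(y))=m(m(x,y),y)$, is clause 2 of Definition \ref{restlie2}. Hence $(m,\omega)$ is a restricted multiderivation satisfying (\ref{2jesaispas5}) and (\ref{2jesaispas6}).

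The converse is the same chain of equivalences read backwards. Given a restricted multiderivation $(m,\omega)$ with symbol $\sigma$ satisfying (\ref{2jesaispas5}) and (\ref{2jesaispas6}), I would define $[\cdot,\cdot]:=m$, $(\cdot)^{[2]}:=\omega$ and $\rho:=\sigma$ on the given $A$-module $L$. The ordinary correspondence of \cite{EM22} turns (\ref{2jesaispas1}), (\ref{2jesaispas2}) and (\ref{2jesaispas5}) into a Lie-Rinehart structure; the additivity of $\omega$ together with $2$-homogeneity and (\ref{2jesaispas6}) make $(L,m,\omega)$ a restricted Lie algebra in the sense of Definition \ref{restlie2}; and (\ref{2jesaispas3}), (\ref{2jesaispas4}) supply the two remaining compatibility axioms of Definition \ref{RLR2}. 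The two assignments are manifestly inverse to one another, which gives the bijection.

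The main obstacle is organisational rather than technical: one must be careful that the characteristic-$2$ specialisation is consistent, i.e. that the iterated bracket in (\ref{2jesaispas6}) really collapses to the single term $m(m(x,y),y)$, and that the exponent $a^p$ in the Hochschild condition is read as $a^2$ while the factor $\rho(ax)^{p-1}$ becomes $\rho(ax)$. Once these reductions are fixed, the only substantive ingredient is the well-definedness of the anchor as a Lie morphism, which is exactly what the cited ordinary correspondence provides; everything else is a line-by-line matching of axioms.
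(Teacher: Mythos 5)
Your proposal is correct and follows essentially the same route as the paper: the paper proves the analogous characteristic-$p$ statement (Proposition \ref{ontoonep}) by exactly this dictionary $m\leftrightarrow[\cdot,\cdot]$, $\w\leftrightarrow(\cdot)^{[p]}$, $\sigma\leftrightarrow\rho$, deferring the ordinary half of the correspondence to \cite{EM22} and \cite{MM20}, and leaves the characteristic-$2$ case as an immediate specialisation. Your write-up is, if anything, more explicit than the paper's about the points where the specialisation matters (the collapse of the iterated bracket in (\ref{2jesaispas6}) to $m(m(x,y),y)$ and the reading of the Hochschild condition with $a^2$ and $\rho(ax)$), and about where the anchor's Lie-morphism property comes from.
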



\subsubsection{Restricted formal deformations}

\begin{defi}\label{defidefo2}
	A restricted formal deformation of $(m,\w)$ is given by two maps
%

\begin{multicols}{5}
	\text{}
	\columnbreak
	\begin{align*}m_t:L\times L&\longrightarrow L[[t]]\\
	(x,y)&\longmapsto \displaystyle\sum_{i\geq 0}t^i m_i(x,y);\\
	\end{align*}
	\columnbreak
	\text{}
	\columnbreak
	\hspace{-3cm}\begin{align*}
	\w_t:L&\longrightarrow L[[t]]\\
	x&\longmapsto \displaystyle\sum_{j\geq 0}t^j\omega_j(x),\\
	\end{align*}
	\columnbreak
	\text{}
\end{multicols}

	with $m_0=m$, $\w_0=\w$ and $(m_i,\w_i)$ restricted multiderivations.
	
	Moreover, $m_t$ and $\w_t$ must satisfy, for all  $x,y,z\in L$,
	
	\begin{equation}\label{eqdef1}
	m_t(x,m_t(y,z))+m_t(y,m_t(z,x))+m_t(z,m_t(x,y))=0;
	\end{equation}
	\begin{equation}\label{eqdef2}
	m_t(x,\w_t(y))=m_t(m_t(x,y),y);
	\end{equation}
	\begin{equation}\label{eqdef3}
	\w_t(x+y)=\w_t(x)+\w_t(y)+m_t(x,y);
	\end{equation}
	\begin{equation}\label{eqdef4}
	\sum_{i=0}^k\sigma_i\left(\w_{k-i}(x)\right)=\sum_{i=0}^k\sigma_i(x)\circ\left(\sigma_{k-i}(x)\right),~~\forall k\geq 0.
	\end{equation}
	
\end{defi}

\begin{rmq}
	\hspace{0.3cm}
	\begin{enumerate}
		\item The map $m_t$ extends to $L[[t]]$ by $\F[[t]]$-linearity.
		
		\item The map $\w_t$ extends to $L[[t]]$ using Equations (\ref{extend2map}) and (\ref{eqdef3}).
	\end{enumerate}
\end{rmq}

\begin{rmq}
	Condition $(\ref{eqdef1})$ ensures that the deformed object $\left(A, L[[t]], m_t,\sigma_t\right)$ is a Lie-Rinehart algebra. Conditions $(\ref{eqdef1})$, $(\ref{eqdef2})$  and $(\ref{eqdef3})$ ensure that $\left( L[[t]], m_t,\w_t\right)$ is a restricted Lie algebra. Moreover, if Condition $(\ref{eqdef4})$ is satisfied, then $\left(A, L[[t]], m_t,\w_t,\sigma_t\right)$ is a restricted Lie-Rinehart algebra.
\end{rmq}

\begin{defi}
    If the condition $(\ref{eqdef4})$ of  Definition $\ref{defidefo2}$ are not satisfied, the deformation is called \textbf{weak deformation}.
\end{defi}

\begin{rmq}
    Here we see that there is no analog of Condition (\ref{condenplus2}) that we imposed in Definition \ref{defidefop}. If $p=2$, the Hochschild condition of  Definition \ref{RLR2} becomes linear in $\w$ and $\sigma$, therefore $\w_t$ and $\sigma_t$ satisfy it without imposing any further condition.
\end{rmq}

\begin{rmq}
    Like in the previous section, all the following results dealing with restricted deformations, equivalences and obstructions in characteristic $2$ are also valid for restricted Lie algebras, by forgetting the Lie-Rinehart structure. Weak deformations of a restricted  Lie-Rinehart algebra correspond to deformations of  restricted Lie algebras.
\end{rmq}

\begin{lem}
	Let $\left(m_t,\w_t \right)$ be a restricted deformation of $(m,\w)$. Then $\left(m_k,\w_k \right)\in C_{*_2}^2(L,L)~\forall k\geq 0$.
\end{lem}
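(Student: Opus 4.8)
The plan is to verify, for each $k\geq 0$, the three defining conditions that place a pair in $C^2_{*_2}(L,L)$: that $m_k$ is skew-symmetric and bilinear, so that $m_k\in C^2_{CE}(L,L)$; that $\w_k$ is $2$-homogeneous, i.e. $\w_k(\lambda x)=\lambda^2\w_k(x)$; and that $\w_k(x+y)=\w_k(x)+\w_k(y)+m_k(x,y)$. Two of these follow from the structural data of a deformation, while the third is the one carrying genuine content and will be extracted by comparing coefficients of $t^k$ in one of the defining identities.

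First I would invoke Definition \ref{defidefo2}: by hypothesis each coefficient pair $(m_k,\w_k)$ is a restricted multiderivation in the sense of Definition \ref{2restmulti}. In particular $m_k\colon L\times L\to L$ is skew-symmetric and bilinear, hence $m_k\in C^2_{CE}(L,L)$, and $\w_k\colon L\to L$ is $2$-homogeneous. This settles the first two cochain conditions at once. (The $2$-homogeneity can also be recovered independently by expanding the scalar identity $\w_t(\lambda x)=\lambda^2\w_t(x)$, valid for $\lambda\in\F$ because $\w_t$ is a $[2]$-map on $L[[t]]$, and comparing the coefficients of $t^k$.)

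It remains to establish the additivity relation, which is the real point of the lemma. For this I would expand Condition (\ref{eqdef3}) of Definition \ref{defidefo2} as a power series in $t$, using the well-definedness of the extension recorded in (\ref{extend2map}):
\begin{equation*}
\sum_{k\geq 0}t^k\,\w_k(x+y)=\sum_{k\geq 0}t^k\,\w_k(x)+\sum_{k\geq 0}t^k\,\w_k(y)+\sum_{k\geq 0}t^k\,m_k(x,y),
\end{equation*}
and then compare the coefficients of $t^k$ on both sides. This yields exactly $\w_k(x+y)=\w_k(x)+\w_k(y)+m_k(x,y)$ for every $k\geq 0$, the third defining condition of a restricted $2$-cochain. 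Combining the three verifications gives $(m_k,\w_k)\in C^2_{*_2}(L,L)$ for all $k\geq 0$.

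I do not expect a serious obstacle: the argument reduces to a direct coefficient comparison, and the only point requiring a little care is to confirm that the formal maps $m_t$ and $\w_t$ act coefficient-wise as written on elements of $L$. This is guaranteed by the $\F[[t]]$-linearity of $m_t$ and by the extension formula (\ref{extend2map}) together with (\ref{eqdef3}) for $\w_t$, so the collection of $t^k$-coefficients is legitimate.
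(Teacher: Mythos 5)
Your proposal is correct and follows essentially the same route as the paper: the additivity condition is obtained by expanding Equation (\ref{eqdef3}) and comparing coefficients of $t^k$, and the $2$-homogeneity by expanding $\w_t(\lambda x)=\lambda^2\w_t(x)$. The only difference is that you also explicitly record the skew-symmetry and bilinearity of $m_k$ from the hypothesis that each $(m_i,\w_i)$ is a restricted multiderivation, which the paper leaves implicit.
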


\begin{proof}
	Let $x,y\in L$. By expanding Equation (\ref{eqdef3}), we obtain
	
	$$\sum_{i\geq 0}t^i\omega_i(x+y)=\sum_{i\geq 0}t^i\omega_i(x)+\sum_{i\geq 0}t^i\omega_i(y)+\sum_{i\geq 0}t^im_i(x).$$ Then, for every $k\geq 0$, we have
	$$\omega_k(x+y)=\omega_k(x)+\omega_k(y)+m_k(x,y),$$ which is the desired identity. Moreover, for $\lambda\in \F$, we have
	
	$$ \w_t(\lambda x)=\sum_{i\geq 0}t^i\omega_i(\lambda x)=\lambda^2\sum_{i\geq 0}t^i\omega_i(x), \text{ so } \omega_i(\lambda x)=\lambda^2\omega_i(x),~\forall i\geq0.  $$
\end{proof}
We have the following classical result:

\begin{prop}
Let $\left(m_t,\w_t \right)$ be a restricted deformation of $(m,\w)$. Then $(m_1,\w_1)$ is a $2$-cocycle of the restricted cohomology, that reads
	
	$$ d_{CE}^2m_1=0 \text{ and } \delta^2\w_1=0.    $$
\end{prop}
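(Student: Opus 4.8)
The plan is to show that both components of the restricted differential $d^2_{*_2}(m_1,\w_1)=\bigl(d^2_{CE}m_1,\delta^2\w_1\bigr)$ vanish. By the preceding lemma $(m_1,\w_1)\in C^2_{*_2}(L,L)$, so $\delta^2\w_1$ is well defined (it is computed with $\varphi=m_1$), and it remains to extract each identity by collecting the coefficient of $t$ in one of the structure equations of Definition~\ref{defidefo2}.

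First I would treat the Chevalley--Eilenberg part. Expanding the Jacobi condition (\ref{eqdef1}) and collecting the coefficient of $t$ yields
\[
\sum_{i+j=1}\bigl(m_i(x,m_j(y,z))+m_i(y,m_j(z,x))+m_i(z,m_j(x,y))\bigr)=0.
\]
Separating the two contributions $(i,j)=(0,1)$ and $(1,0)$ and using $m_0=m=[\cdot,\cdot]$ gives exactly $d^2_{CE}m_1=0$; this is the classical Gerstenhaber computation, and since we are in characteristic $2$ all the signs occurring in $d^2_{CE}$ are trivial, so no sign bookkeeping is needed. This step is routine.

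For the restricted part I would expand the $p$-map compatibility (\ref{eqdef2}) modulo $t^2$. The left-hand side $m_t(x,\w_t(y))$ contributes at order $t$ the term $m(x,\w_1(y))+m_1(x,\w(y))$, while on the right-hand side, writing $m_t(x,y)=m(x,y)+tm_1(x,y)+O(t^2)$ and substituting into the outer $m_t(\,\cdot\,,y)$, the order-$t$ term is $m(m_1(x,y),y)+m_1(m(x,y),y)$. Recalling $m=[\cdot,\cdot]$ and $\w(y)=y^{[2]}$, this gives
\[
[x,\w_1(y)]+m_1(x,y^{[2]})=[m_1(x,y),y]+m_1([x,y],y).
\]
To recognize this as $\delta^2\w_1=0$ I would rename the distinguished (squared) variable, setting $y=A$, $x=B$, and compare with
\[
\delta^2\w_1(A,B)=[A,m_1(A,B)]+[B,\w_1(A)]+m_1(A^{[2]},B)+m_1([A,B],A).
\]
The four summands match term by term once one uses the characteristic-$2$ identities $[u,v]=[v,u]$ and $m_1(u,v)=m_1(v,u)$: indeed $[m_1(B,A),A]=[A,m_1(A,B)]$, $m_1(B,A^{[2]})=m_1(A^{[2]},B)$, and $m_1([B,A],A)=m_1([A,B],A)$, while the term $[B,\w_1(A)]$ is common. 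Hence $\delta^2\w_1=0$.

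The hard part will be purely organisational: one must keep track of which of the two arguments plays the distinguished role in $\delta^2$ as opposed to in (\ref{eqdef2}), and expand the nested expression $m_t(m_t(x,y),y)$ correctly to first order. Once the variables are aligned and the characteristic-$2$ symmetries of the bracket and of $m_1$ are invoked, the order-$t$ part of (\ref{eqdef2}) is literally $\delta^2\w_1=0$; combined with $d^2_{CE}m_1=0$ this gives $d^2_{*_2}(m_1,\w_1)=(0,0)$, as claimed.
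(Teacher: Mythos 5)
Your proposal is correct and follows essentially the same route as the paper: the Chevalley--Eilenberg part is the classical order-$t$ extraction from the Jacobi identity, and the restricted part is obtained by collecting the coefficient of $t$ in Equation~(\ref{eqdef2}) to get $m_1(x,\w(y))+m(x,\w_1(y))=m(m_1(x,y),y)+m_1(m(x,y),y)$, which is $\delta^2\w_1=0$. Your explicit variable-matching with the definition of $\delta^2$ (identifying the distinguished squared argument and using the characteristic-$2$ symmetries) is a detail the paper leaves implicit, and it is carried out correctly.
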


\begin{proof}
	The ordinary theory ensures that $d^2_{CE}m_1=0$. It remains to check that $\delta^2\w_1=0$. If we expand Equation (\ref{eqdef2}), we obtain
	
	\begin{equation}
	    \sum_{i,j}t^{i+j}m_i(x,\w_j(y))=\sum_{i,j}t^{i+j}m_i(m_j(x,y),y).
	\end{equation}
	By collecting the coefficients of $t$, we obtain
	
	$$m_1(x,\w(y))+m(x,\w_1(y))=m(m_1(x,y),y)+m_1(m(x,y),y),  $$
	which is equivalent to $\delta^2\w_1=0.$
\end{proof}

\subsubsection{Equivalence of restricted formal deformations}

Let $\phi:L[[t]]\longrightarrow L[[t]]$ be a formal automorphism defined on $L$ by 
$$ \phi(x)=\sum_{i\geq 0}t^i\phi_i(x),~\phi_i:L\longrightarrow L,~\phi_0=id,  $$ and then extended by $\F[[t]]$-linearity.

\begin{defi}
	Two formal deformations $\left(m_t,\w_t \right)$ and $\left(m'_t,\w'_t \right)$ of $(m,\w)$ are \textbf{equivalent} if there exists a formal automorphism $\phi_t$ such that
	\begin{eqnarray}\label{eqeq1}
	 m_t\left(\phi_t(x),\phi_t(y) \right)&=&\phi_t\left(m'_t(x,y) \right)\text{  and  }  
	\\ \label{eqeq2}
	  \phi_t\left(\omega_t(x) \right)&=&\w'_t\left(\phi_t(x)\right). 
	\end{eqnarray}

\end{defi}

\begin{lem}
	With the above data, we have
	\begin{eqnarray}
	m_i(x,y)+m'_1(x,y)&=&\phi_1(m(x,y))+m(x,\phi_1(y))+m(\phi_1(x),y);\\
	\omega_1(x)+\w'_1(x)&=&m(\phi_1(x),x)+\phi_1\left(\w(x)\right).
	\end{eqnarray}
\end{lem}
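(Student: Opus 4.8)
The plan is to follow exactly the strategy of Lemma \ref{equivalem}, adapted to characteristic $2$: expand the two equivalence relations (\ref{eqeq1}) and (\ref{eqeq2}) modulo $t^2$, substitute $\phi_t(x)=x+t\phi_1(x)+O(t^2)$ together with $m_t=m+tm_1+O(t^2)$ and $\w_t=\w+t\w_1+O(t^2)$, and then read off the coefficient of $t$ on each side. Throughout, I will use freely that in characteristic $2$ the sign conventions collapse, so $-1=1$ and moving a term across an equality is the same as adding it; this is why both target identities are stated with a $+$ rather than a difference.

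First I would treat the multiderivation equation (\ref{eqeq1}). Writing $m_t(\phi_t(x),\phi_t(y))=\phi_t(m'_t(x,y))$ and expanding both sides modulo $t^2$, the left side becomes $m(x,y)+t\bigl(m(\phi_1(x),y)+m(x,\phi_1(y))+m_1(x,y)\bigr)$ using bilinearity of $m_0=m$, while the right side becomes $m(x,y)+t\bigl(\phi_1(m(x,y))+m'_1(x,y)\bigr)$ since $\phi_0=\mathrm{id}$ and $m'_0=m$. Matching the coefficients of $t$ and rearranging in characteristic $2$ yields the first identity $m_1(x,y)+m'_1(x,y)=\phi_1(m(x,y))+m(x,\phi_1(y))+m(\phi_1(x),y)$. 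This step is essentially the classical Gerstenhaber computation and presents no real difficulty.

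The second identity, coming from (\ref{eqeq2}), is where the genuine work lies, and the main obstacle is handling the nonlinear map $\w'_t$ evaluated on the sum $\phi_t(x)=x+t\phi_1(x)$. The left side $\phi_t(\w_t(x))$ expands straightforwardly to $\w(x)+t\bigl(\phi_1(\w(x))+\w_1(x)\bigr)\pmod{t^2}$. For the right side I would invoke the additivity property (\ref{eqdef3}) of the deformation $(m'_t,\w'_t)$ to split $\w'_t(x+t\phi_1(x))=\w'_t(x)+\w'_t(t\phi_1(x))+m'_t(x,t\phi_1(x))$. The key observations are that $2$-homogeneity forces $\w'_t(t\phi_1(x))=t^2\w'_t(\phi_1(x))=O(t^2)$, so the quadratic term drops out modulo $t^2$, while the cross term satisfies $m'_t(x,t\phi_1(x))=t\,m(x,\phi_1(x))\pmod{t^2}$ because $m'_0=m$. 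Hence the right side reduces to $\w(x)+t\bigl(\w'_1(x)+m(x,\phi_1(x))\bigr)$. Comparing the coefficients of $t$ gives $\w_1(x)+\w'_1(x)=m(x,\phi_1(x))+\phi_1(\w(x))$, and a final appeal to skew-symmetry of $m$ together with $-1=1$ rewrites $m(x,\phi_1(x))=m(\phi_1(x),x)$, producing the stated identity $\w_1(x)+\w'_1(x)=m(\phi_1(x),x)+\phi_1(\w(x))$.
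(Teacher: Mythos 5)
Your proof is correct and follows essentially the same route as the paper: expand (\ref{eqeq1}) and (\ref{eqeq2}) modulo $t^2$, use the additivity property to split $\w'_t(x+t\phi_1(x))$ into $\w'_t(x)+\w'_t(t\phi_1(x))+m'_t(x,t\phi_1(x))$, kill the middle term by $2$-homogeneity, and collect the coefficients of $t$. The only cosmetic difference is that you make the appeal to (\ref{eqdef3}) and to skew-symmetry of $m$ explicit where the paper leaves them implicit.
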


\begin{proof}
	\begin{align*}
	    (\ref{eqeq1})&\Longleftrightarrow m_t\left(\sum_it^i\phi_i,\sum_j t^j\phi_j\right)=\left(\sum_kt^km'_t(x,y)\right)\\
	    &\Longleftrightarrow\sum_{i,j,k}t^{i+j+k}m_k\left(\phi_i(x),\phi_j(y)\right)=\sum_{i,j}t^{i+j}\phi_j(m'_i(x,y)).
	\end{align*}
	By collecting the coefficients of $t$, we obtain
	$$  m(\phi_1(x),y)+m(x,\phi_(y))+m_1(x,y)=\phi_1(m(x,y))+m'_1(x,y),  $$ which is equivalent to the first desired equation. The following computations are made mod $t^2$.
	\begin{align*}
	    (\ref{eqeq2})&\Rightarrow\phi_t\left(\sum_it^i\w_i(x)\right)=\w'_t\left(x+t\phi_1(x)\right)\\
	    &\Rightarrow\sum_it^i\phi_t(\w_i(x))=\w'_t(x)+\w'_t(t\phi_1(x))+m'_t(x,t\phi_1(x))\\
	    &\Rightarrow \sum_{i,j}t^{i+j}\phi_j(w_i(x))=\w(x)+t\left(\w'_1(x)+m(x,\phi_1(x))\right).
	\end{align*}
Considering  coefficient of $t$, one has 
	$$\w_1(x)+\phi_1(\w(x))=\w'_1(x)+m(x,\phi_1(x)),$$ which is equivalent to the second desired equation.
	
\end{proof}

\begin{prop}
	If $\left(m_t,\omega_t \right) $ and $\left( m'_t,\w'_t \right) $ are two equivalent deformations of $(m,\w)$, then their infinitesimals $\left(m_1,\omega_1 \right) $ and $\left( m'_1,\w'_1 \right) $ are cohomologous.
\end{prop}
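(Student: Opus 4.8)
The plan is to exhibit the explicit coboundary witnessing that $(m_1,\w_1)$ and $(m_1',\w_1')$ are cohomologous. Since $\phi=\sum_{i\geq 0}t^i\phi_i$ with each $\phi_i$ linear, we have $\phi_1\in C^1_{CE}(L,L)=C^1_{*_2}(L,L)$, so the natural candidate for the coboundary is $d^1_{*_2}(\phi_1)$. Recall that by definition $d^1_{*_2}(\phi_1)=\left(d^1_{CE}\phi_1,\ \w_{\phi_1}\right)$, where $\w_{\phi_1}(x)=\phi_1(x^{[2]})+x\cdot\phi_1(x)$. The goal is therefore to match the two components of $d^1_{*_2}(\phi_1)$ against the right-hand sides produced by the previous Lemma. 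Throughout, the coefficients are adjoint, so the module action is $x\cdot v=[x,v]=m(x,v)$, and I will use repeatedly that in characteristic $2$ one has $-1=1$ and hence $m(x,y)=m(y,x)$.

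First I would compute the Chevalley--Eilenberg component. Writing out the degree-$1$ differential, in which all signs collapse to $+1$ since $\car\F=2$, gives
\[
d^1_{CE}\phi_1(x,y)=\phi_1(m(x,y))+m(x,\phi_1(y))+m(y,\phi_1(x)),
\]
and rewriting $m(y,\phi_1(x))=m(\phi_1(x),y)$ by the characteristic-$2$ symmetry yields exactly the right-hand side of the first identity of the Lemma. Hence $m_1+m_1'=d^1_{CE}\phi_1$.

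Next I would treat the restricted component. Since $\w=(\cdot)^{[2]}$, we have $\phi_1(x^{[2]})=\phi_1(\w(x))$, while $x\cdot\phi_1(x)=m(x,\phi_1(x))=m(\phi_1(x),x)$, again by symmetry; therefore $\w_{\phi_1}(x)=\phi_1(\w(x))+m(\phi_1(x),x)$. This is precisely the right-hand side of the second identity of the Lemma, so $\w_1+\w_1'=\w_{\phi_1}$.

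Combining the two identities gives $(m_1+m_1',\ \w_1+\w_1')=d^1_{*_2}(\phi_1)\in B^2_{*_2}(L,L)$; since $+$ coincides with $-$ in characteristic $2$, this says that $(m_1,\w_1)$ and $(m_1',\w_1')$ differ by a restricted $2$-coboundary, i.e.\ they are cohomologous. I do not expect any genuine obstacle: the argument is a direct comparison between the definition of $d^1_{*_2}$ and the equivalence identities already established, and the only points requiring care are the systematic use of the characteristic-$2$ symmetry of $m$, the identification of the adjoint action with the bracket, and the (immediate) verification that $\phi_1$ lies in $C^1_{*_2}(L,L)$.
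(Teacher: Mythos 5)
Your proposal is correct and follows essentially the same route as the paper: it takes the two identities from the preceding equivalence lemma and identifies their right-hand sides with the two components of $d^1_{*_2}(\phi_1)$, so that $(m_1+m_1',\,\w_1+\w_1')$ is the restricted coboundary of $\phi_1$. You actually spell out the characteristic-$2$ sign and symmetry bookkeeping that the paper's (very terse) proof leaves implicit, which is a welcome addition rather than a deviation.
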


\begin{proof}
The formula in previous lemma may be written as: 
  $$\phi_1(m(x,y))+m(x,\phi_1(y))+m(m\phi_1(x),y)=d_{CE}^1\phi_1(x,y)  $$ and 
  $$m(\phi_1(x),x)+\phi_1(\w(x))=\delta^1\phi_1.$$
\end{proof}

\begin{defi}
	A formal deformation  $\left(m_t,\omega_t \right)$  of $(m,\w)$ is called \textbf{trivial} if there exists  a formal automorphism $\phi_t$ such that
	\begin{eqnarray}
	\phi_t\left(m_t(x,y) \right)&=&m(\phi_t(x),\phi_t(y));\\
	\phi_t\left(\omega_t(x) \right)&=&\w\left(\phi_t(x) \right).   
	\end{eqnarray}
\end{defi}

\begin{prop}
	Suppose that $(m_1,\omega_1)\in B^2_{*_2}(L,L)$. Then the infinitesimal deformation given by $m_t=m+tm_1$ and $\omega_t=\w+t\omega_1$ is trivial.
\end{prop}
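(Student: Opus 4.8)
The plan is to build the trivialising automorphism explicitly from a primitive of the coboundary, and then to check the two defining identities modulo $t^{2}$, exactly as in the characteristic $p$ analogue. Since $(m_1,\omega_1)\in B^2_{*_2}(L,L)$, there is $\psi\in C^1_{*_2}(L,L)=C^1_{CE}(L,L)$ with $(m_1,\omega_1)=d^1_{*_2}(\psi)=\bigl(d^1_{CE}\psi,\,\omega_\psi\bigr)$, where by the formula for $d^1_{*_2}$ we have, for all $x,y\in L$ (recall that in characteristic $2$ all signs coincide),
\[
m_1(x,y)=\psi(m(x,y))+m(x,\psi(y))+m(\psi(x),y),\qquad \omega_1(x)=\psi\!\left(x^{[2]}\right)+[x,\psi(x)].
\]
I would then take $\phi_t:=\id+t\psi$, extended to $L[[t]]$ by $\F[[t]]$-linearity; this is a formal automorphism with $\phi_0=\id$, invertible over $\F[[t]]$, and $\phi_1=\psi$.

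First I would verify the bracket identity $\phi_t(m_t(x,y))=m(\phi_t(x),\phi_t(y))$ of the triviality definition modulo $t^{2}$, which is the relevant order for the infinitesimal deformation $m_t=m+tm_1$, $\omega_t=\w+t\omega_1$. Expanding both sides gives
\[
\phi_t(m_t(x,y))\equiv m(x,y)+t\bigl(m_1(x,y)+\psi(m(x,y))\bigr),\qquad m(\phi_t(x),\phi_t(y))\equiv m(x,y)+t\bigl(m(\psi(x),y)+m(x,\psi(y))\bigr)
\]
modulo $t^{2}$. The coefficients of $t$ agree precisely because $m_1=d^1_{CE}\psi$, so the first identity holds.

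For the $[2]$-map identity $\phi_t(\omega_t(x))=\w(\phi_t(x))$, the only delicate point, and the main thing to handle carefully, is the evaluation of $\w(\phi_t(x))=(x+t\psi(x))^{[2]}$. Here I would invoke the characteristic-$2$ additivity formula $(u+v)^{[2]}=u^{[2]}+v^{[2]}+[u,v]$, observing that the square term $(t\psi(x))^{[2]}=t^{2}\psi(x)^{[2]}$ vanishes modulo $t^{2}$, so that only the cross term survives and
\[
\w(\phi_t(x))\equiv x^{[2]}+t\,[x,\psi(x)]\pmod{t^{2}}.
\]
Comparing this with $\phi_t(\omega_t(x))\equiv x^{[2]}+t\bigl(\omega_1(x)+\psi(x^{[2]})\bigr)\pmod{t^{2}}$ and reading off the coefficient of $t$ yields exactly $\omega_1(x)=[x,\psi(x)]+\psi(x^{[2]})=\omega_\psi(x)$, which holds by construction. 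Thus both triviality equations are satisfied mod $t^{2}$ by $\phi_t=\id+t\psi$, and the deformation is trivial. The essential obstacle is purely the correct treatment of the nonlinear $[2]$-map under the formal shift $x\mapsto x+t\psi(x)$; once the quadratic contribution is seen to be $O(t^{2})$, the verification reduces to matching the defining formulas of $d^1_{*_2}$.
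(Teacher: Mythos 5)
Your proof is correct and follows essentially the same route as the paper's: take a primitive $\psi$ of the coboundary, set $\phi_t=\id+t\psi$, and verify the two triviality identities modulo $t^2$, with the coefficient of $t$ in the bracket identity reducing to $m_1=d^1_{CE}\psi$ and in the $[2]$-map identity to $\omega_1(x)=\psi(x^{[2]})+[x,\psi(x)]$. Your explicit use of the characteristic-$2$ additivity $(u+v)^{[2]}=u^{[2]}+v^{[2]}+[u,v]$ to isolate the cross term $t[x,\psi(x)]$ makes transparent a step the paper leaves implicit, but the argument is the same.
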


\begin{proof}
	Let $(m_1,\omega_1) \in B^2_{*_2}(L,L)$. Then there exists $ \varphi: L\rightarrow L$ such that $m_1=d_{CE}^2\varphi$ and $\w_1=\delta^1\varphi$. We consider a formal automorphism
	$$\phi_t=\id+t\varphi.  $$
	Since $m_1$ is a Chevalley-Eilenberg $2$-coboundary, we have
	\begin{equation}
	    m_1(x,y)=\varphi(m(x,y))+m(x,\varphi(y))+m(\varphi(x),y).
	\end{equation}
	Thus, we can write
$$m(x,y)+t\left(\varphi(m(x,y))+m_1(x,y) \right)=m(x,y)+t\left(m(x,\varphi(y))+m(\varphi(x),y)\right),$$
	which is equivalent to
$$\phi_t\left(m(x,y)+tm_1(x,y)\right)=m(\phi_t(x),\phi_t(y)).$$
Finally, we obtain (mod $t^2$)
	\begin{equation}\label{prooftriv1}
	    \phi_t(m_t(x,y))=m(\phi_t(x),\phi_t(y)).
	\end{equation}
Then, using the identity $\w_1=\delta^1\varphi$, we have

\begin{equation}
    \w_1(x)+\varphi\left(\w(x)\right)=m(x,\varphi(x)).
\end{equation}
Thus, we can write
$$ \w(x)+t\left(\w_1(x)+\phi\left( \w(x)\right)\right)=\w(x)+tm(x,\varphi(x)),$$
which is equivalent (mod $t^2$) to
$$ \phi_t\left(\w(x)+t\w_1(x)\right)=\w(x+t\varphi(x)).$$
Finally, we obtain (mod $t^2$)
\begin{equation}\label{prooftriv2}
    \phi_t\left(\w_t(x)\right)=\w(\phi_t(x)).
\end{equation}	
Equations (\ref{prooftriv1}) and (\ref{prooftriv2}) together implies that the deformation is trivial.
	
\end{proof}

\begin{thm}
	The second restricted cohomology space $H^2_{*_2}(L,L)$ classifies, up to equivalence, the infinitesimal restricted deformations.
\end{thm}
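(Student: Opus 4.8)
The plan is to realise an explicit bijection between the set of equivalence classes of infinitesimal restricted deformations of $(m,\w)$ and the cohomology space $H^2_{*_2}(L,L)$, given by the assignment
\[
(m+tm_1,\,\w+t\w_1)\longmapsto [(m_1,\w_1)].
\]
First I would check that this assignment is well-defined. By the preceding Lemma we have $(m_1,\w_1)\in C^2_{*_2}(L,L)$, and by the Proposition just above it is a restricted $2$-cocycle, so $[(m_1,\w_1)]\in H^2_{*_2}(L,L)$ makes sense. Passing to equivalence classes, the Proposition asserting that equivalent deformations have cohomologous infinitesimals shows that two equivalent infinitesimal deformations are sent to the same class; hence the map descends to equivalence classes. (As noted in the remark, we work here with the restricted Lie algebra structure, forgetting the Lie--Rinehart data, so that $H^2_{*_2}(L,L)$ is the relevant cohomology and only Equations (\ref{eqdef1})--(\ref{eqdef3}) are at stake.)

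For surjectivity, I would take an arbitrary cocycle $(m_1,\w_1)\in Z^2_{*_2}(L,L)$, set $m_t=m+tm_1$ and $\w_t=\w+t\w_1$, and verify that $(m_t,\w_t)$ is a genuine infinitesimal deformation by examining Equations (\ref{eqdef1})--(\ref{eqdef3}) modulo $t^2$. The coefficient of $t$ in (\ref{eqdef1}) is exactly $d^2_{CE}m_1(x,y,z)$, which vanishes because $m_1$ is a Chevalley--Eilenberg cocycle; the coefficient of $t$ in (\ref{eqdef2}) is exactly $\delta^2\w_1(x,y)$, which vanishes by the restricted part of the cocycle condition; and the coefficient of $t$ in (\ref{eqdef3}) is the cochain relation $\w_1(x+y)=\w_1(x)+\w_1(y)+m_1(x,y)$, which holds since $(m_1,\w_1)\in C^2_{*_2}(L,L)$. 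Thus every class in $H^2_{*_2}(L,L)$ is attained.

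For injectivity I would establish the converse of the equivalence Proposition: if the infinitesimals $(m_1,\w_1)$ and $(m_1',\w_1')$ are cohomologous, the two deformations are equivalent. Writing $(m_1+m_1',\,\w_1+\w_1')=d^1_{*_2}\phi_1$ for some $\phi_1\in C^1_{*_2}(L,L)$ (recall that in characteristic $2$ there are no signs, so ``cohomologous'' means the sum is a coboundary), I would take $\phi_t=\id+t\phi_1$ and run the computation of the equivalence Lemma in reverse, exactly as in the proof of the triviality Proposition: expanding $m_t(\phi_t(x),\phi_t(y))$ and $\phi_t(\w_t(x))$ modulo $t^2$ and substituting $m_1+m_1'=d^1_{CE}\phi_1$ and $\w_1+\w_1'=\delta^1\phi_1$ recovers Equations (\ref{eqeq1}) and (\ref{eqeq2}) to first order, so $\phi_t$ is an equivalence. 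Together with well-definedness this yields the claimed bijection.

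The step I expect to be most delicate is the $\w$-component of the injectivity argument. When expanding $\phi_t(\w_t(x))=\w'_t(\phi_t(x))$ modulo $t^2$ one must correctly treat the $[2]$-map expansion of $\w'_t\bigl(x+t\phi_1(x)\bigr)$ via the formula (\ref{extend2map}) together with the additivity relation (\ref{eqdef3}), and confirm that the resulting first-order term equals $m(\phi_1(x),x)+\phi_1(\w(x))=\delta^1\phi_1(x)$. This is precisely where the characteristic-$2$ simplifications matter, since the higher Newton-type correction terms present for odd $p$ drop out, and it is the only place where a short but genuine computation, rather than a direct citation of the earlier lemmas, is needed.
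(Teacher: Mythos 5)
Your proof is correct and follows exactly the route the paper intends: the theorem is stated there without proof as a summary of the preceding results, and your writeup assembles them in the natural way, supplying the two directions the paper only proves in special cases (surjectivity via the cocycle conditions read off from Equations (\ref{eqdef1})--(\ref{eqdef3}) modulo $t^2$, and injectivity by running the triviality argument with $\phi_t=\id+t\phi_1$ for a general pair of cohomologous infinitesimals). Your treatment of the $\w$-component, using (\ref{extend2map}) and the additivity (\ref{eqdef3}) to see that $\w'_t(t\phi_1(x))$ contributes only at order $t^2$, is precisely the computation carried out in the paper's proof of the triviality proposition.
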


\subsubsection{Obstructions}

Let $\left(m,\w \right)$ be a restricted multiderivation.
A restricted deformation $(m^n_t,\w^n_t)$ is of order $n\in \N$ if the defining formal power series are truncated up to order $n$, that is
$$m_t^n=\sum_{k=0}^{n}t^km_k~\text{ and }~\w_t^n=\sum_{k=0}^{n}t^k\omega_k.$$

\begin{defi}
	For all  $x,y,z\in L$, we set: 
	\begin{align*}
	\obs_{n+1}^{(1)}(x,y,z)&=\sum_{i=1}^{n}\left(m_i(x,m_{n+1-i}(y,z))+m_i(y,m_{n+1-i}(z,x))+m_i(z,m_{n+1-i}(x,y)) \right);\\ 
	\obs_{n+1}^{(2)}(x,y)&=\sum_{i=1}^{n}\left( m_i(y,\omega_{n+1-i}(x))+m_i(m_{n+1-i}(y,x),x) \right).
	\end{align*}
\end{defi}

\begin{lem}
	$\left( \obs_{n+1}^{(1)},\obs_{n+1}^{(2)}\right)\in C^3_{*_2}(L,L)$. 
\end{lem}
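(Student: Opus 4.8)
The plan is to verify the three conditions defining a $*_2$-cochain directly for the pair $\left(\obs_{n+1}^{(1)}, \obs_{n+1}^{(2)}\right)$, using only that each $m_i$ is an alternating bilinear map (so that $m_i(a,b)=m_i(b,a)$ and $m_i(a,a)=0$ in characteristic $2$) and that each $\w_j$ is $2$-homogeneous and satisfies the additivity relation $\w_j(x+y)=\w_j(x)+\w_j(y)+m_j(x,y)$ established in the preceding lemma.

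First I would check that $\obs_{n+1}^{(1)}\in C^3_{CE}(L,L)$. Trilinearity is immediate, since each summand $m_i\left(x,m_{n+1-i}(y,z)\right)$ is linear in its outer argument and in both inner arguments. For the alternating property, write $B(x,y,z)=\sum_{i=1}^{n}m_i\left(x,m_{n+1-i}(y,z)\right)$, so that $\obs_{n+1}^{(1)}$ is the cyclic sum $B(x,y,z)+B(y,z,x)+B(z,x,y)$. The symmetry $m_i(a,b)=m_i(b,a)$ makes $B$ symmetric in its last two slots, from which a short reindexing shows $\obs_{n+1}^{(1)}$ is invariant under transposing its first two arguments; being cyclically invariant by construction, it is then invariant under all of $S_3$. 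Evaluating on $(x,x,z)$ and using $m_j(x,x)=0$ gives $\obs_{n+1}^{(1)}(x,x,z)=2B(x,x,z)=0$, and by the $S_3$-symmetry it vanishes whenever two arguments agree. Hence $\obs_{n+1}^{(1)}$ is alternating and lies in $\Hom(\Lambda^3 L,L)=C^3_{CE}(L,L)$.

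Next I would verify the homogeneity conditions for $\obs_{n+1}^{(2)}$. Replacing $x$ by $\lambda x$, the factor $\w_{n+1-i}(\lambda x)=\lambda^2\w_{n+1-i}(x)$ and the two occurrences of $x$ in $m_i\left(m_{n+1-i}(y,\lambda x),\lambda x\right)$ together produce exactly $\lambda^2$, giving $2$-homogeneity in the first slot; linearity in the second slot follows at once from bilinearity of the $m_i$.

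The main step, and the one demanding the most care, is the $(*)$-type relation
\[
\obs_{n+1}^{(2)}(x+y,z)=\obs_{n+1}^{(2)}(x,z)+\obs_{n+1}^{(2)}(y,z)+\obs_{n+1}^{(1)}(x,y,z).
\]
Here I would expand $\obs_{n+1}^{(2)}(x+y,z)$ by substituting the additivity relation for $\w_{n+1-i}$ in the first family of terms and by expanding both arguments bilinearly in $m_i\left(m_{n+1-i}(z,x+y),x+y\right)$ for the second. After cancelling $\obs_{n+1}^{(2)}(x,z)$ and $\obs_{n+1}^{(2)}(y,z)$, the surviving contribution is
\[
\sum_{i=1}^{n}\left[m_i\left(z,m_{n+1-i}(x,y)\right)+m_i\left(m_{n+1-i}(z,x),y\right)+m_i\left(m_{n+1-i}(z,y),x\right)\right],
\]
and I would identify this with $\obs_{n+1}^{(1)}(x,y,z)$ by applying $m_i(a,b)=m_i(b,a)$, together with the symmetry of each $m_{n+1-i}$, to move the bracketed factors back into the outer slots. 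The match is term-by-term in the index $i$, so no reindexing is needed. Together these three verifications give $\left(\obs_{n+1}^{(1)},\obs_{n+1}^{(2)}\right)\in C^3_{*_2}(L,L)$.
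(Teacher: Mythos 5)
Your proof is correct and follows essentially the same route as the paper: the heart of both arguments is the expansion of $\obs_{n+1}^{(2)}(x+y,z)$ using the additivity relation $\w_{n+1-i}(x+y)=\w_{n+1-i}(x)+\w_{n+1-i}(y)+m_{n+1-i}(x,y)$ together with bilinearity, the leftover terms being identified with $\obs_{n+1}^{(1)}(x,y,z)$ via the symmetry of the $m_i$ in characteristic $2$. You are in fact more thorough than the paper, whose proof records only this additivity computation and leaves the alternating trilinearity of $\obs_{n+1}^{(1)}$ and the homogeneity and linearity conditions on $\obs_{n+1}^{(2)}$ implicit.
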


\begin{proof}
	
\begin{align*}
		\obs_{n+1}^{(2)}(x_1+x_2,y)&=\sum_{i=1}^{n}\left( m_i(y,\omega_{n+1-i}(x_1+x_2))+m_i\left( m_{n+1-i}(y,x_1+x_2),x_1+x_2 \right)\right) \\
		&=\sum_{i=1}^{n}\left( m_i\left(y,\omega_{n+1-i}(x_1)\right) + m_i\left(y,\omega_{n+1-i}(x_2) \right)+m_i\left(y,m_{n+1-i}(x_1,x_2)\right) \right)  \\
		&~~+\sum_{i=1}^{n}\left( m_i\left(m_{n+1-i}(y,x_1),x_1 \right) +m_i\left(m_{n+1-i}(y,x_1),x_2 \right)\right) \\
		&~~+\sum_{i=1}^{n}\left( m_i\left(m_{n+1-i}(y,x_2),x_1 \right)+m_i\left(m_{n+1-i}(y,x_2),x_2 \right)\right)  \\
		&=\sum_{i=1}^{n}\left( m_i\left(y,\omega_{n+1-i}(x_1)\right) + m_i\left(m_{n+1-i}(y,x_1),x_1 \right)\right)\\
		&~~+\sum_{i=1}^{n}\left( m_i\left(y,\omega_{n+1-i}(x_2)\right) + m_i\left(m_{n+1-i}(y,x_2),x_2 \right)\right)\\
		&~~+\sum_{i=1}^{n}\left(m_i\left(m_i(y,m_{n+1-i}(x_1,x_2) \right)+m_1\left(m_{n+1-i}(y,x_1),x_2 \right)+m_1\left(m_{n+1-i}(y,x_2),x_1 \right)\right)\\
		&=\obs_{n+1}^{(2)}(x_1,y)+\obs_{n+1}^{(2)}(x_2,y)+\obs_{n+1}^{(1)}(x_1,x_2,y).
\end{align*}	
\end{proof}

\begin{prop}

	Let $\left( m^n_t,\w^n_t \right) $ be a $n$-order deformation of $\left(m,\w \right)$. Let $\left(m_{n+1},~\w_{n+1} \right)\in C^2_{*_2}(L,L)$. Then $\left( m^n_t+t^{n+1}m_{n+1},~  \w^n_t+t^{n+1}\w_{n+1} \right) $ is a $(n+1)$-order deformation of $L$ if and only if  
	$$	\left( \obs^{(1)}_{n+1}, \obs^{(2)}_{n+1}\right)=d_{*_2}^2\left(m_{n+1},~\w_{n+1} \right).        $$

\end{prop}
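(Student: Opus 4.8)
The plan is to use the splitting $d_{*_2}^2(m_{n+1},\w_{n+1})=\left(d_{CE}^2 m_{n+1},\delta^2\w_{n+1}\right)$ coming from the definition of the restricted differential, so that the asserted equality is equivalent to the two identities $d_{CE}^2 m_{n+1}=\obs^{(1)}_{n+1}$ and $\delta^2\w_{n+1}=\obs^{(2)}_{n+1}$. Since replacing $(m_t^n,\w_t^n)$ by $(m_t^n+t^{n+1}m_{n+1},\w_t^n+t^{n+1}\w_{n+1})$ leaves all coefficients of $t^k$ with $k\leq n$ unchanged, the truncated object is automatically a deformation up to order $n$; hence being an $(n+1)$-order deformation is equivalent to the vanishing of the coefficient of $t^{n+1}$ in each of Equations (\ref{eqdef1}), (\ref{eqdef2}) and (\ref{eqdef3}). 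The $t^{n+1}$-coefficient of (\ref{eqdef3}) is exactly the cochain condition $\w_{n+1}(x+y)=\w_{n+1}(x)+\w_{n+1}(y)+m_{n+1}(x,y)$, which holds by the hypothesis $(m_{n+1},\w_{n+1})\in C^2_{*_2}(L,L)$, while the anchor condition (\ref{eqdef4}) is not part of the cohomological obstruction (we work at the restricted Lie algebra level, as in the remark). So only Equations (\ref{eqdef1}) and (\ref{eqdef2}) carry information.

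For the first identity I would expand the Jacobi condition (\ref{eqdef1}) for the truncated series and collect the coefficient of $t^{n+1}$, exactly as in the characteristic $p>2$ case. Isolating the two extreme summands $i=0$ and $i=n+1$ and recognising them, via the characteristic $2$ Chevalley--Eilenberg differential $d_{CE}^2 m_{n+1}(x,y,z)=m_{n+1}([y,z],x)+m_{n+1}([x,z],y)+m_{n+1}([x,y],z)+[x,m_{n+1}(y,z)]+[y,m_{n+1}(x,z)]+[z,m_{n+1}(x,y)]$ (all signs being $+$ in characteristic $2$), leaves precisely the middle summands $1\leq i\leq n$, which by definition form $\obs^{(1)}_{n+1}(x,y,z)$. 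Thus the vanishing of the $t^{n+1}$-coefficient of (\ref{eqdef1}) is equivalent to $d_{CE}^2 m_{n+1}=\obs^{(1)}_{n+1}$; this is the standard Gerstenhaber computation and presents no difficulty.

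The delicate step is the second identity. Here I would apply (\ref{eqdef2}) with the variables chosen so that $x$ occupies the $2$-homogeneous slot, i.e. read off the coefficient of $t^{n+1}$ in $m_t(y,\w_t(x))=m_t(m_t(y,x),x)$, giving $\sum_{i+j=n+1}m_i(y,\w_j(x))=\sum_{i+j=n+1}m_i(m_j(y,x),x)$. Separating the four extreme terms ($i\in\{0,n+1\}$ on each side) from the middle terms ($1\leq i\leq n$), the middle terms are by definition $\obs^{(2)}_{n+1}(x,y)$, while the four extreme terms, once one uses $\w=(\cdot)^{[2]}$, $m=[\cdot,\cdot]$ and the characteristic $2$ identity $m(a,b)=m(b,a)$, become $[y,\w_{n+1}(x)]$, $m_{n+1}(x^{[2]},y)$, $[x,m_{n+1}(x,y)]$ and $m_{n+1}([x,y],x)$. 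Comparing with the definition of $\delta^2$ for $n=2$ (whose last sum is empty) shows that these four terms are exactly $\delta^2\w_{n+1}(x,y)$, so the $t^{n+1}$-coefficient of (\ref{eqdef2}) vanishes if and only if $\delta^2\w_{n+1}=\obs^{(2)}_{n+1}$. The main obstacle is precisely this bookkeeping: one must track the reindexing coming from the outer $m_t$ and repeatedly invoke skew-symmetry (which is symmetry in characteristic $2$) to bring each extreme term into the exact shape of a summand of $\delta^2$.

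Finally, the converse is read off the same computation: assuming $\left(\obs^{(1)}_{n+1},\obs^{(2)}_{n+1}\right)=d_{*_2}^2(m_{n+1},\w_{n+1})$ forces the $t^{n+1}$-coefficients of (\ref{eqdef1}) and (\ref{eqdef2}) to vanish, while (\ref{eqdef3}) holds at order $n+1$ because $(m_{n+1},\w_{n+1})$ is a cochain; together with the fact that all lower-order conditions are inherited from $(m_t^n,\w_t^n)$, this shows the augmented series is a genuine $(n+1)$-order deformation.
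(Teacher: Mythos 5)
Your proposal is correct and follows essentially the same route as the paper's own proof: expand Equations (\ref{eqdef1}) and (\ref{eqdef2}) for the truncated series, collect the coefficient of $t^{n+1}$, and isolate the extreme summands $i\in\{0,n+1\}$ to recognise $d_{CE}^2 m_{n+1}$ and $\delta^2\w_{n+1}$, leaving exactly the obstruction cochains. Your added care in placing the variable in the $2$-homogeneous slot of $\delta^2$, and your explicit handling of Equation (\ref{eqdef3}) via the cochain hypothesis, are welcome refinements but do not change the argument.
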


\begin{proof}
	If $m^n_t+t^{n+1}m_{n+1}  $
	satisfies the Jacobi identity, we obtain for all $x,y,z\in L$:
	 \begin{equation}
	 \sum_{i=0}^{n+1}\left(m_i(x,m_{n+1-i}(y,z)) +m_i(y,m_{n+1-i}(z,x)) +m_i(z,m_{n+1-i}(x,y))\right)=0.
	 \end{equation}
The above  equation can be written	
\begin{equation}
\sum_{i=1}^{n}\left(m_i(x,m_{n+1-i}(y,z)) +m_i(y,m_{n+1-i}(z,x)) +m_i(z,m_{n+1-i}(x,y))\right)=d_{CE}^2m_{n+1}(x,y,z).
\end{equation} 
Conversely, if $\obs^{(1)}_{n+1}=d_{CE}^2m_{n+1}$, then $m^n_t+t^{n+1}m_{n+1}$ satisfies the Jacobi identity. Now suppose that $\w^n_t+t^{n+1}\w_{n+1}$ is a $2$-map with respect to $m^n_t+t^{n+1}m_{n+1}$. The following equation is then satisfied:

\begin{equation}\label{obseq}
m_t^{n+1}\left(x,\w_t^{n+1} \right)=m_t^{n+1}\left( m_t^{n+1}(x,y),y\right),  
\end{equation} 
where we have denoted for convenience $m_t^{n+1}=m^n_t+t^{n+1}m_{n+1}$ and $\w^{n+1}_t=\w^n_t+t^{n+1}\w_{n+1}$.
By expanding Equation (\ref{obseq}), we obtain

\begin{equation}
	\sum_{q=0}^{n+1}t^q\sum_{i=0}^{q}m_i(x,\omega_{q-i}(y))=\sum_{q=0}^{n+1}t^q\sum_{i=0}^{q} m_i(m_{q-i}(x,y),y).
\end{equation}	 
Collecting coefficient of $t^{n+1}$, one obtains in particular
$$
\sum_{i=0}^{n+1}m_i(x,\omega_{n+1-i}(y))=\sum_{i=0}^{n+1}t^q\sum_{i=0}^{n+1} m_i(m_{n+1-i}(x,y),y),
$$
which can be written
\begin{align*}
m(x,\omega_{n+1}(y))+m_{n+1}(x,\w(y))&+m(m_{n+1}(x,y),y)+m_{n+1}(m(x,y),y)\\
&=\sum_{i=1}^{n}\left(m_i(x,\omega_{n+1-i}(y))+ m_i(m_{n+1-i}(x,y),y)\right). 
\end{align*}
We conclude that
$$\sum_{i=1}^{n}\left(m_i(x,\omega_{n+1-i}(y))+ m_i(m_{n+1-i}(x,y),y)\right)=\delta_{*_2}^2\omega_{n+1}(x,y).$$

\end{proof}
\subsection{Restricted Heisenberg algebras in characteristic 2}

In this section, we consider Heisenberg algebras in characteristic 2. We provide all restricted structures on the Heisenberg algebra, compute the adjoint  second cohomology group. Moreover, we consider an example of  restricted Lie-Rinehart algebra based on  a restricted Heisenberg algebra and discuss its deformations. We refer to Section \ref{sectionh} about Heisenberg algebra's background.
\subsubsection{Restricted cohomology}
Let $\F$ be an algebraically closed field of characteristic $2$. Recall that the Heisenberg algebra $\h$ is spanned by elements $x,y,z$ with a bracket given by $[x,y]=z.$ It is worth noticing that in characteristic $2$, $\h$ is isomorphic to $\Sl_2$.    Let $(\cdot)^{[2]}$ be a $2$-mapping on $\h$. Then, we have
\begin{align}
\nonumber(x+y)^{[2]}&=x^{[2]}+y^{[2]}+z;\\
(x+z)^{[2]}&=x^{[2]}+z^{[2]};\\\nonumber
(y+z)^{[2]}&=y^{[2]}+z^{[2]}.\nonumber
\end{align}
Therefore, the $2$-mapping is not $2$-semilinear. Let $u=ax+by+cz\in\h$, $a,b,c\in \F$. Then
\begin{equation}
u^{[2]}=(ax+by+cz)^{[2]}=a^2x^{[2]}+b^2y^{[2]}+c^2z^{[2]}+abz.
\end{equation}

According to  the second condition of the Definition \ref{restlie2}, the image of $(\cdot)^{[2]}$ lies in the center of $\h$, which is one-dimensional and spanned by $z$. Therefore, it exists $\theta:\h\rightarrow \F$ linear such that $x^{[2]}=\theta(x)z,~y^{[2]}=\theta(y)z,~z^{[2]}=\theta(z)z.$ It is not difficult to see that Lemma \ref{heisiso} also holds for $p=2$. We deduce the following classification result, using the same techniques as Theorem \ref{heisclass}.

\begin{thm}
    There are two non-isomorphic restricted Heisenberg algebras in characteristic $2$, respectively given by the linear forms $\theta=0$ and $\theta=z^*$.
\end{thm}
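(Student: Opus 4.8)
The plan is to classify the linear forms $\theta$ up to restricted isomorphism, exactly as in Theorem \ref{heisclass}, but paying close attention to the fact that in characteristic $2$ the $2$-map is no longer semilinear. First I would record that every restricted structure on $\h$ comes from a linear form $\theta$ (the characteristic-$2$ analogue of Proposition \ref{classifheisenberg}, valid because the image of $(\cdot)^{[2]}$ must lie in the one-dimensional center $\F z$), and that every Lie automorphism of $\h$ has the shape (\ref{hiso}). The essential new input is the correct restricted-isomorphism condition: since $(ax+by+cz)^{[2]}=a^2x^{[2]}+b^2y^{[2]}+c^2z^{[2]}+ab\,z$, evaluating $\phi(v^{[2]})=\phi(v)^{[2]'}$ on the basis $\{x,y,z\}$ yields, with $u=ae-bd\neq 0$,
\begin{align*}
\theta(x)u&=a^2\theta'(x)+b^2\theta'(y)+c^2\theta'(z)+ab,\\
\theta(y)u&=d^2\theta'(x)+e^2\theta'(y)+f^2\theta'(z)+de,\\
\theta(z)u&=u^2\theta'(z).
\end{align*}
These differ from (\ref{resthiso}) precisely by the cross-terms $ab$ and $de$, and that difference is what collapses the classification from three classes to two.

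Next I would extract the isomorphism invariant from the third equation: since $u\neq 0$, one has $\theta(z)=0\iff\theta'(z)=0$. This separates the forms into those with $\theta(z)=0$ and those with $\theta(z)\neq 0$, and in particular shows that $(\h,0)$ and $(\h,z^*)$ are not isomorphic. It then remains to prove that each of these two families is a single isomorphism class. For the family $\theta(z)=0$, I would show every such $(\h,\theta)$ is isomorphic to $(\h,0)$ by constructing $\phi\colon(\h,0)\to(\h,\theta)$: with source form $0$ the conditions reduce to requiring that $(a,b)$ and $(d,e)$ both be zeros of the binary quadratic form $Q(s,t)=\theta(x)s^2+st+\theta(y)t^2$, with $ae-bd\neq 0$ (the parameters $c,f$ are then free). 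Since the coefficient of $st$ equals $1$, $Q$ is a nonzero form over the algebraically closed field $\F$ and splits into two distinct linear factors: if $\theta(x)=0$ it is $t\bigl(s+\theta(y)t\bigr)$, while if $\theta(x)\neq 0$ the zeros are governed by the Artin--Schreier equation $v^2+v=\theta(x)\theta(y)$, which in characteristic $2$ always has the two distinct roots $v$ and $v+1$. Two distinct factors give two linearly independent zero vectors, hence the required $u\neq 0$, and so $(\h,\theta)\cong(\h,0)$.

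For the family $\theta(z)\neq 0$, I would build $\phi\colon(\h,\theta)\to(\h,z^*)$. The third equation forces $u=\theta(z)$, after which the first two read $c^2=\theta(x)\theta(z)+ab$ and $f^2=\theta(y)\theta(z)+de$. Choosing for instance $a=\theta(z)$, $e=1$, $b=d=0$ (so that $u=\theta(z)\neq 0$) and taking $c,f$ to be the square roots of the right-hand sides---available since $\F$ is perfect---produces a restricted isomorphism, whence $(\h,\theta)\cong(\h,z^*)$. Combining the two families with the invariant from the third equation gives exactly the two non-isomorphic restricted Heisenberg algebras $\theta=0$ and $\theta=z^*$.

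The main obstacle is precisely the careful recomputation of the restricted conditions in characteristic $2$: the extra cross-terms $ab$ and $de$ are easy to overlook, yet they are exactly what merges the former $\theta=x^*$ class (present for $p>3$) into the $\theta=0$ class here; the only genuinely delicate point thereafter is checking that $Q$ always factors into two \emph{distinct} lines, which rests on the separability of the Artin--Schreier map in characteristic $2$ together with the algebraic closedness of $\F$.
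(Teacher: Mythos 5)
Your proposal is correct, and it is in fact considerably more careful than the paper's own treatment, which disposes of this theorem in one sentence by asserting that Lemma \ref{heisiso} ``also holds for $p=2$'' and invoking ``the same techniques as Theorem \ref{heisclass}.'' The key point you identify is real and essential: since $(ax+by+cz)^{[2]}=a^2x^{[2]}+b^2y^{[2]}+c^2z^{[2]}+ab\,z$ in characteristic $2$, the conditions (\ref{resthiso}) must be corrected by the cross-terms $ab$ and $de$. Taken literally, (\ref{resthiso}) with $p=2$ would force $a^2=d^2=0$ when comparing $\theta=0$ with $\theta'=x^*$ and hence would yield \emph{three} classes, contradicting the theorem itself; it is precisely the cross-terms that merge $(\h,x^*)$ into the class of $(\h,0)$ (e.g.\ $a=0$, $b=d=e=1$ gives an explicit restricted isomorphism). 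Your argument also goes beyond the paper's template in a second respect: Theorem \ref{heisclass} only compares the named representatives $0$, $x^*$, $y^*$, $z^*$ pairwise, whereas you prove exhaustiveness over \emph{all} linear forms $\theta$, separating them by the invariant $\theta(z)=0$ versus $\theta(z)\neq0$ extracted from the third equation, and then collapsing the $\theta(z)=0$ family via the factorization of $Q(s,t)=\theta(x)s^2+st+\theta(y)t^2$ into two distinct lines (the coefficient $1$ of $st$ prevents $Q$ from being a square, and the Artin--Schreier equation $v^2+v=\theta(x)\theta(y)$ has two distinct roots over the algebraically closed base field). Both constructions check out, including the determinant condition $u=ae-bd\neq0$ and the use of perfectness to extract $c$ and $f$ in the $\theta(z)\neq0$ case. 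The only thing I would add is a one-line restatement of the corrected analogue of Lemma \ref{heisiso} before using it, since you are quietly replacing a lemma the paper cites in a form that does not survive the passage to $p=2$.
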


We compute the second restricted cohomology groups of the restricted Heisenberg algebras with adjoint coefficients. Let $\theta$ be a linear form on the (ordinary) Heisenberg algebra. We denote by $(\h,\theta)$ the restricted Heisenberg algebra obtained with $\theta$. We also denote $H^2_*(\h,\theta)=H^2_{*_2}((\h,\theta),(\h,\theta))$ the adjoint second restricted cohomology group of $(\h,\theta)$.

Let $u,v\in\h$ and $(\varphi,\w)\in C_{*_2}(\h,\theta)$. The restricted $2$-cocycle condition is given by
\begin{equation}
    \varphi\left(u,\theta(v)z\right)+[u,\w(v)]+[\varphi(u,v),v]+\varphi([u,v],v)=0.
\end{equation}
The restricted $2$-coboundary condition is given by
\begin{equation}
    \w(u)=[\varphi(u),u]+\varphi\left(\theta(u)z\right).
\end{equation}
Using the same techniques as Lemmas \ref{hrescocy} and  \ref{hrestcob}, we obtain the general form of the $2$-cocycles and $2$-coboundaries.

\begin{lem}\label{2hrescocy}
The restricted $2$-cocycles for $(\h,\theta)$ are given by pairs $(\varphi,\w)$, where
\begin{itemize}
    \item[$\bullet$] \underline{Case $\theta=0$}:
        \begin{equation}\begin{cases}
            \varphi(x,y)&=ax+by+cz\\
            \varphi(x,z)&=fz\\
            \varphi(y,z)&=iz\\
        \end{cases}
        ~~~~~~~~~~
        \begin{cases}
            \w(x)&=(b+f)x+\gamma z\\
            \w(y)&=(a+i)y+\epsilon z\\
            \w(z)&=\kappa z\\
        \end{cases}\end{equation}

    \item[$\bullet$] \underline{Case $\theta=z^*$}:
  
        \begin{equation}\begin{cases}
            \varphi(x,y)&=ax+by+cz\\
            \varphi(x,z)&=fz\\
            \varphi(y,z)&=iz\\
        \end{cases}
        ~~~~~~~~~~
        \begin{cases}
            \w(x)&=(b+f)x+\gamma z\\
            \w(y)&=(a+i)y+\epsilon z\\
            \w(z)&=ix+fy+\kappa z,\\
        \end{cases}\end{equation} where all the parameters $a,b,c,d,e,f,h,i,\gamma,\epsilon,\kappa$ belongs to $\F$.
\end{itemize}
\end{lem}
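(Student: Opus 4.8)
The plan is to follow the template of Lemmas \ref{ordcocycle} and \ref{hrescocy}, now reading every sign modulo $2$. First I would record the Chevalley--Eilenberg part: writing a generic $\varphi\in C^2_{CE}(\h,\h)$ in the basis as in \eqref{stdrphi} with parameters $a,\dots,i$, the only nontrivial Jacobi-type equation (exactly as in Lemma \ref{ordcocycle}) reduces to $h=d$, since $-d=d$ in characteristic $2$. Next I would parametrise the restricted part by the three vectors $\w(x),\w(y),\w(z)\in\h$; the cochain conditions ($2$-homogeneity $\w(\lambda u)=\lambda^2\w(u)$ together with $\w(u+v)=\w(u)+\w(v)+\varphi(u,v)$) then determine $\w$ on all of $\h$, so it suffices to impose the cocycle identity on pairs of basis vectors.

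The core of the argument is to evaluate the restricted $2$-cocycle condition displayed just before the statement,
$$\varphi\!\left(u,\theta(v)z\right)+[u,\w(v)]+[\varphi(u,v),v]+\varphi([u,v],v)=0,$$
on all ordered pairs $(u,v)\in\{x,y,z\}^2$, using $[x,y]=z=[y,x]$ and the centrality of $z$. Writing $\w(x)=\alpha x+\beta y+\gamma z$, $\w(y)=\lambda x+\mu y+\epsilon z$ and $\w(z)=\delta x+\eta y+\kappa z$, each evaluation collapses to a single element of $\h$, and its vanishing is one linear relation. I expect the pairs $(y,x)$ and $(z,y)$ to force $d=e=0$ and $g=0$, which with $h=d$ gives $h=0$, hence $\varphi(x,z)=fz$ and $\varphi(y,z)=iz$; the pairs $(x,x)$ and $(y,y)$ to kill $\beta$ and $\lambda$; and the pairs $(x,y)$, $(y,x)$ to fix $\mu=a+i$ and $\alpha=b+f$.

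The remaining step is the case split on $\theta\in\{0,z^*\}$, which enters only through the term $\varphi(u,\theta(v)z)=\theta(v)\,\varphi(u,z)$. For $\theta=0$ this term vanishes identically, the pairs $(x,z)$ and $(y,z)$ yield $\eta=\delta=0$, and one recovers $\w(z)=\kappa z$, giving the first display. For $\theta=z^*$ only $z^{[2]}=z$ survives, so the evaluations at $(x,z)$ and $(y,z)$ pick up the extra summands $\varphi(x,z)=fz$ and $\varphi(y,z)=iz$; balancing them forces $\eta=f$ and $\delta=i$, producing $\w(z)=ix+fy+\kappa z$ and the second display. All other basis pairs (in particular $(z,z)$ and the pairs with $z$ in the first slot) give either trivial identities or constraints already obtained.

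I expect the main obstacle to be conceptual rather than computational: unlike the $p>3$ situation, where $\h$ being nilpotent of order $2$ made the $(*)$-property automatic and $\w$ merely $p$-semilinear, here $\w$ genuinely fails to be additive, since $\w(u+v)=\w(u)+\w(v)+\varphi(u,v)$ couples it to $\varphi$. One must therefore justify that testing the cocycle identity on basis vectors is enough; this is legitimate because $\delta^2\w$ is itself a bona fide $3$-cochain by the earlier lemma ensuring $(d^n_{CE}\varphi,\delta^n\w)\in C^{n+1}_{*_2}(L,M)$, hence determined by its values on basis vectors. A secondary point worth flagging is that $\h\cong\Sl_2$ in characteristic $2$, which makes the linear system more rigid than in the $p>3$ case and is precisely what forces $d=e=g=h=0$.
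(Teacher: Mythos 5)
Your proposal is correct and follows exactly the route the paper intends: the paper's own ``proof'' is just the remark that one applies the same technique as in Lemmas \ref{hrescocy} and \ref{hrestcob}, namely evaluating the displayed restricted $2$-cocycle condition on all basis pairs of $(\h,\theta)$, and your evaluations (yielding $d=e=g=h=0$, $\w(x)=(b+f)x+\gamma z$, $\w(y)=(a+i)y+\epsilon z$, and $\w(z)=\kappa z$ or $ix+fy+\kappa z$ according to $\theta$) reproduce the stated answer, with the reduction to basis pairs correctly justified via Frobenius-semilinearity of $\delta^2\w$ once $d^2_{CE}\varphi=0$. The only quibble is the closing aside: the extra rigidity comes from the terms $[\varphi(u,v),v]+\varphi([u,v],v)$ in $\delta^2$, not from the incidental isomorphism $\h\cong\Sl_2$, but this does not affect the argument.
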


\begin{lem}\label{2hrestcob}
The restricted $2$-coboundaries for $(\h,\theta)$ are given by pairs $(\varphi,\w)$, where

        $$\begin{cases}
            \varphi(x,y)&=Ax+By+\tilde{C}z\\
            \varphi(x,z)&=Hz\\
            \varphi(y,z)&=Gz,\\
        \end{cases}$$ with $A,B,\tilde{C},D,E,G,H$ belonging to $\F$ and
\begin{itemize}
    \item[$\bullet$] \underline{Case $\theta=0$}: $\w(x)=Ez,~\w(y)=Dz,~\w(z)=0;$ 
  
    \item[$\bullet$] \underline{Case $\theta=z^*$}:
    $\w(x)=Ez,~\w(y)=Dy,~\w(z)=Gx+Hy+Iz.$
\end{itemize}
\end{lem}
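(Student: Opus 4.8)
The plan is to compute the image $\im(d^1_{*_2})$ directly, mirroring the proof of Lemma \ref{hrestcob}. Since $C^1_{*_2}(\h,\h)=C^1_{CE}(\h,\h)=\Hom_\F(\h,\h)$, a $1$-cochain is just a linear endomorphism $\psi$ of $\h$, and by definition of $d^1_{*_2}$ one has $d^1_{*_2}\psi=\left(d^1_{CE}\psi,\w\right)$ with $\w(u)=\psi\!\left(u^{[2]}\right)+[u,\psi(u)]$. First I would write $\psi$ generically on the basis as $\psi(x)=Ax+By+Cz$, $\psi(y)=Dx+Ey+Fz$, $\psi(z)=Gx+Hy+Iz$, and recall from Proposition \ref{classifheisenberg} that $u^{[2]}=\theta(u)z$ for $u\in\{x,y,z\}$.

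Next I would evaluate $d^1_{CE}\psi$ on the three basis pairs, using that the only nonzero bracket is $[x,y]=z$ and that in characteristic $2$ all signs collapse and $[y,x]=z$. A short computation gives
\[
d^1_{CE}\psi(x,y)=Gx+Hy+(A+E+I)z,\qquad d^1_{CE}\psi(x,z)=Hz,\qquad d^1_{CE}\psi(y,z)=Gz,
\]
so, setting $\tilde C:=A+E+I$, the $\varphi$-component has exactly the stated shape. For the $\w$-component I would use $[x,\psi(x)]=Bz$, $[y,\psi(y)]=Dz$ and $[z,\psi(z)]=0$ together with $u^{[2]}=\theta(u)z$, obtaining
\[
\w(x)=\theta(x)(Gx+Hy+Iz)+Bz,\quad \w(y)=\theta(y)(Gx+Hy+Iz)+Dz,\quad \w(z)=\theta(z)(Gx+Hy+Iz).
\]

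Finally I would specialize the linear form. For $\theta=0$ every $\theta(u)$ vanishes, so $\w(x)=Bz$, $\w(y)=Dz$, $\w(z)=0$; for $\theta=z^*$ only $\theta(z)=1$ survives, so $\w(x)=Bz$, $\w(y)=Dz$, $\w(z)=Gx+Hy+Iz$. These coincide with the asserted description (up to the labelling of the free parameters). I do not anticipate a genuine obstacle: the argument is the characteristic-$2$ transcription of Lemma \ref{hrestcob} and is essentially bookkeeping. The only point that differs from the case $p>3$ is that here the second entry $\w$ of a cochain is not $2$-semilinear but governed by $\w(u+v)=\w(u)+\w(v)+m(u,v)$; however, no separate verification of this cochain identity is needed, since $(\varphi,\w)=d^1_{*_2}\psi$ lands in $C^2_{*_2}(\h,\h)$ by construction.
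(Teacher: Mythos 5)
Your computation is correct and follows exactly the route the paper intends (the text explicitly defers to the technique of Lemma \ref{hrestcob}): parametrize $\psi\in C^1_{*_2}(\h,\h)$ on the basis, compute $d^1_{CE}\psi$ and $\w(u)=\psi\left(u^{[2]}\right)+[u,\psi(u)]$, and specialize $\theta$. One remark: your outcome $\w(y)=Dz$ in the case $\theta=z^*$ is not merely a relabelling of the stated $\w(y)=Dy$ — the latter is a typo in the lemma (a pair with $\varphi=0$ and $\w(y)=y$ would violate the cocycle condition of Lemma \ref{2hrescocy}, hence cannot be a coboundary), and your $Dz$ is the version consistent with the basis of $H^2_{*_2}(\h,z^*)$ given in the theorem that follows.
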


With Lemmas \ref{2hrescocy} and \ref{2hrestcob}, we  compute a basis for the second cohomology spaces.

\begin{thm}
We have $\dim_{\F}\left(H^2_{*_2}(\h,0)\right)=3$ and $\dim_{\F}\left(H^2_{*_2}(\h,z^*)\right)=2.$
\begin{itemize}
\item[$\bullet$] A basis for $H^2_{*_2}(\h,0)$ is given by $\{(\varphi_1,
\w_1), (\varphi_2,\w_2), (0,\w_3)\}$, with
$$\varphi_1(y,z)=z;~\varphi_2(x,z)=z;~\w_1(y)=y;~\w_2(x)=x;~\w_3(z)=z.$$
(We only write non-zero images).
\item[$\bullet$] A basis for $H^2_{*_2}(\h,z^*)$ is given by $\{(\varphi_1,
\w_1), (\varphi_2,\w_2)\}$, with
$$\varphi_1(x,y)=x;~\varphi_2(x,y)=y;~
\w_1(y)=y;~\w_2(x)=x.$$
\end{itemize}
\end{thm}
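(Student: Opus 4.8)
The plan is to compute $H^2_{*_2}(\h,\theta)=Z^2_{*_2}(\h,\theta)/B^2_{*_2}(\h,\theta)$ for $\theta\in\{0,z^*\}$ by reading both numerator and denominator off the explicit descriptions already obtained. First I would count the cocycles: in Lemma \ref{2hrescocy} the pair $(\varphi,\w)$ is pinned down in each of the two cases by the eight scalars $a,b,c,f,i,\gamma,\epsilon,\kappa$ (the $x$- and $y$-coefficients of $\w$ being forced to equal $b+f$ and $a+i$), and these vary independently, so $\dim_\F Z^2_{*_2}(\h,\theta)=8$ for both $\theta=0$ and $\theta=z^*$.

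The delicate step is $\dim_\F B^2_{*_2}$. I would warn explicitly against simply counting the letters $A,B,\tilde C,G,H,D,E$ appearing in Lemma \ref{2hrestcob}, because the assignment $\psi\mapsto d^1_{*_2}\psi=(d^1_{CE}\psi,\delta^1\psi)$ is far from injective in those coordinates: expanding on the basis $\{x,y,z\}$ shows that the coboundary parameters obey hidden relations $G=A$ and $H=B$, so the naive count overshoots. The safe route is $\dim_\F B^2_{*_2}=\dim_\F C^1_{*_2}-\dim_\F Z^1_{*_2}$, with $C^1_{*_2}(\h,\h)=\Hom_\F(\h,\h)$ of dimension $9$ and $Z^1_{*_2}$ the space of restricted derivations (the kernel of $d^1_{*_2}$). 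Solving the derivation equations together with $\psi(u^{[2]})=[u,\psi(u)]$ on the basis gives, in characteristic $2$, $\dim_\F Z^1_{*_2}(\h,0)=4$ (only $a_2=b_1=0$ are imposed) and $\dim_\F Z^1_{*_2}(\h,z^*)=3$ (additionally $\psi(z)=0$, forcing $a_1=b_2$). Hence $\dim_\F B^2_{*_2}(\h,0)=5$ and $\dim_\F B^2_{*_2}(\h,z^*)=6$, and therefore $\dim_\F H^2_{*_2}(\h,0)=8-5=3$ and $\dim_\F H^2_{*_2}(\h,z^*)=8-6=2$, as claimed.

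To finish I would exhibit the listed pairs as an actual basis. Each is checked to be a cocycle by matching it to the generic form of Lemma \ref{2hrescocy}: for instance, for $\theta=0$ the pair $(\varphi_1,\w_1)$ is the case $i=1$ with all other parameters zero, which forces $\w_1(y)=(a+i)y=y$, and the remaining generators arise likewise. For linear independence in the quotient I would produce functionals on $Z^2_{*_2}$ that vanish on $B^2_{*_2}$ and separate the classes: the coefficient of $x$ in $\w(x)$ (namely $b+f$), the coefficient of $y$ in $\w(y)$ (namely $a+i$), and, only when $\theta=0$, the coefficient of $z$ in $\w(z)$ (namely $\kappa$). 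These vanish on coboundaries because for $\theta=0$ every coboundary has $\w$ landing in the centre $\F z$, while for $\theta=z^*$ the coboundaries have $\w(x),\w(y)\in\F z$ (only $\w(z)$ may be non-central). Evaluating these functionals on the proposed generators yields the identity matrix, proving at once that the classes are nonzero, pairwise distinct, and spanning.

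The main obstacle is the second paragraph: pinning down $\dim_\F B^2_{*_2}$. The temptation is to trust the coboundary parametrisation verbatim, but the characteristic-$2$ cancellations ($-=+$) create the relations $G=A,\ H=B$ among its parameters, so the honest computation of the restricted derivations (equivalently of $\ker d^1_{*_2}$) is what actually controls the answer.
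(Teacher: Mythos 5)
Your proposal is correct and follows essentially the same route as the paper, which simply reads the quotient off Lemmas \ref{2hrescocy} and \ref{2hrestcob}; your rank--nullity computation of $\dim B^2_{*_2}$ via the restricted derivations is just a more careful way of counting the coboundary parameters, and it rightly accounts for the hidden relations $G=A$, $H=B$ (and implicitly corrects a typo in the coboundary lemma, where $\w(y)=Dy$ should read $\w(y)=Dz$ in the case $\theta=z^*$, as your computation $\delta^1\psi(y)=[y,\psi(y)]\in\F z$ confirms). All dimension counts and the verification of the listed representatives check out.
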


\subsubsection{Restricted Lie-Rinehart structure and restricted deformations}

In this section, we will construct an infinitesimal deformation of a restricted Lie-Rinehart algebra in characteristic $2$, whose underlying Lie algebra is $(\h,z^*)$. Over a field $\F$ of characteristic $2$, there are three non-isomorphic commutative unital associative algebras of dimension $2$, given by
\begin{itemize}
    \item $A_0=\text{Span}_{\F}\{e_1,e_2\},$ with $e_1$ the unit and $e_2e_2=0;$
    \item $A_1=\text{Span}_{\F}\{e_1,e_2\},$ with $e_1$ the unit and $e_2e_2=e_1;$
    \item $A_2=\text{Span}_{\F}\{e_1,e_2\},$ with $e_1$ the unit and $e_2e_2=e_2.$
\end{itemize}

The 2-dimensional classification of unital associative algebras over  over fields of characteristic not $2$algebras reduces to $A_0$ and $A_2$. It turns out that in characteristic 2 the algebra  $A_1$ is not isomorphic to $A_0$ and $A_2$. 
%
\begin{lem}
    Let $L=(\h,z^*)$ and $A=A_1$. Suppose that $A$ acts on $L$ by $a\cdot u=u,~\forall a\in A,~\forall u\in L$. Then, the only restricted Lie-Rinehart structure on the pair $(A,L)$ is given by the anchor map $\rho=0$.
\end{lem}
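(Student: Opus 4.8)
The structure sought is entirely encoded by the anchor $\rho$, since the bracket, the $[2]$-map on $L=(\h,z^*)$, the algebra $A_1$ and the action are all prescribed; so the task splits into showing that $\rho=0$ is the only admissible anchor and that it is indeed admissible. Recall that on $(\h,z^*)$ one has $x^{[2]}=y^{[2]}=0$ and $z^{[2]}=z$, and that the action $a\cdot u=u$ is the one induced by the augmentation character $\chi:A_1\rightarrow\F$, $\chi(e_1)=\chi(e_2)=1$, so that $a\cdot u=\chi(a)u$ and $\ker\chi=\F n$ with $n:=e_1+e_2$, $n^2=0$.

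The plan is to first describe $\Der(A_1)$, which is where characteristic $2$ enters. Every derivation kills the unit, so $D$ is determined by $D(n)=\alpha e_1+\beta n$; the only Leibniz constraint comes from $n^2=0$ and reads $2nD(n)=0$, which is vacuous when $\car\F=2$. Hence $\Der(A_1)$ is two-dimensional, spanned by $D_1:n\mapsto e_1$ and $D_2:n\mapsto n$, with $D_1^2=0$, $D_2^2=D_2$, $D_1D_2=D_1$, $D_2D_1=0$ and $[D_1,D_2]=D_1$. The derivation $D_1$ exists \emph{only} because $\car\F=2$ (in any other characteristic the constraint $nD(n)=0$ forces $\alpha=0$), and this is precisely the feature that singles out $A_1$.

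Next I would write $\rho(u)=a_1(u)D_1+a_2(u)D_2$ on the basis $\{x,y,z\}$. Since the anchor of a Lie-Rinehart algebra is $A$-linear, $\rho(a\cdot u)=a\cdot\rho(u)$ must hold in $\Der(A_1)$; with $a=e_2$ this says $\rho(u)(b)=e_2\,\rho(u)(b)$, i.e. $n\,\rho(u)(b)=0$, hence $\rho(u)(b)\in\ker\chi=\F n$ for all $b$. Evaluating at $b=n$ gives $a_1(u)e_1+a_2(u)n\in\F n$, so $a_1(u)=0$ and $\rho(u)\in\F D_2$ for every $u$. (The same conclusion follows from the Hochschild axiom $(au)^{[2]}=a^2u^{[2]}+\rho(au)(a)u$, which, using $p$-homogeneity and that $\chi$ is a character, collapses to $\chi(\rho(u)(e_2))=0$, again forcing $a_1(u)=0$.) It then remains to run the remaining two axioms: from $x^{[2]}=0$ and $\rho(x)^2=a_2(x)^2D_2$, the restricted-morphism condition $\rho(x^{[2]})=\rho(x)^2$ yields $a_2(x)^2=0$, so $\rho(x)=0$, and likewise $\rho(y)=0$; the Lie-morphism condition then gives $\rho(z)=[\rho(x),\rho(y)]=0$, consistently with $z^{[2]}=z$ (the condition $\rho(z)^2=\rho(z)$ becomes $0=0$). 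Thus $\rho=0$.

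Finally I would verify the converse, that $\rho=0$ genuinely defines a restricted Lie-Rinehart structure: the Leibniz identity reduces to $[u,\chi(a)v]=\chi(a)[u,v]$, the restricted-morphism condition is $0=0$, and Hochschild becomes $(\chi(a)u)^{[2]}=\chi(a)^2u^{[2]}$, all of which hold. The main obstacle is the first step, the explicit determination of $\Der(A_1)$ in characteristic $2$: once one recognises the extra derivation $D_1$ and records the composition table above, the rest is a short chain of constraints, and the whole point is that the $A$-linearity of the anchor (equivalently the Hochschild condition) is exactly what excludes $D_1$ from the image of $\rho$.
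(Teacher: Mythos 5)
Your proof is correct and follows essentially the same route as the paper's: use the $A$-linearity of the anchor to force $\rho(u)(e_2)\in\F(e_1+e_2)$, then invoke the restricted-morphism condition together with $x^{[2]}=y^{[2]}=0$ to kill the remaining coefficient (the paper computes $\rho(x)^2(e_2)=\lambda^2(e_1+e_2)=0$ directly, which is your $a_2(x)^2D_2=0$ in different notation). The explicit description of $\Der(A_1)$ and the verification that $\rho=0$ actually satisfies all the axioms are welcome additions that the paper leaves implicit, but they do not change the argument.
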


\begin{proof}
Since $\rho: L\rightarrow\Der(L)$ must be a Lie morphism, we have $\rho(z)=0$. The condition of $A$-linearity gives $\rho(x)(e_2)=\lambda(e_1+e_2)$ and $\rho(y)(e_2)=\mu(e_1+e_2),~\forall \lambda,\mu\in\F$.
Also  $\rho$ must be a restricted morphism. Therefore,  have $0=\rho(x)^2(e_2)=\lambda\rho(x)(e_2)=\lambda^2(e_1+e_2)$. Hence, we have $\lambda=0$. The same computation with $y$ instead of $x$ gives $\mu=0$.
\end{proof}

Let us choose a restricted multiderivation $(m_1,\w_1)$ given by $m_1(x,y)=y,~m_1(x,z)=m_1(y,z)=0$ and $\w_1(x)=x,~\w_1(y)=\w_1(z)=0.$ It is not difficult to see that if we set $\sigma_1(x)(e_2)=\sigma_1(y)(e_2)=e_1+e_2,~\sigma_t(z)(e_2)=0$, then $\sigma_1$ is a symbol map compatible with $(m_1,\w_1)$ that gives rise to an infinitesimal restricted formal deformation. We therefore obtain the deformation
\begin{align*}
    m_t(x,y)&=z+ty,~m_t(x,z)=m_t(y,z)=0;\\
    \w_t(x)&=tx,~\w_t(y)=0,~\w_t(x)=z;\\
    \sigma_t(x)(e_2)&=\sigma_t(y)(e_2)=e_1+e_2,~\sigma_t(z)(e_2)=0.
\end{align*}

It is easy to verify that the quantities $\obs^{(1)}_2$ and $\obs^{(2)}_2$ vanish on the basis of $(\h,z^*)$. Therefore, it is possible to extend the deformation by taking a restricted $2$-cocycle.

\end{document}